\begin{document}
%%%%%%%%%%%%%%%%%%%%%%%%%%%%%%%%%%%%%%%%%%%%%%%%%
%%%%%%%%%%%%%%% temporary macros %%%%%%%%%%%%%%%%
%%%%%%%%%%%%%%%%%%%%%%%%%%%%%%%%%%%%%%%%%%%%%%%%%
\newcommand{\stablebetaminusonewith}{\alpha}%{(\beta-1)}
\newcommand{\stablebetaminusonewithout}{\alpha}%{\beta-1}
\newcommand{\stablebetaminusonespecial}{\alpha}% these will need to be changed manually
\newcommand{\stablebetawith}{(1+\alpha)}%{\beta}
\newcommand{\stablebetawithout}{{1+\alpha}}%{\beta}
\newcommand{\stablebetaindex}{\alpha}%{\beta}
\newcommand{\stablebetaspecial}{(1+\alpha)}%{\beta} these have not yet been changed manually, but should be changed

\newcommand{\localtimeholder}{\nu}
\newcommand{\pathholder}{\gamma}
\newcommand{\diversityalpha}{\beta}

\newcommand{\thresholda}{b}

\newcommand{\markZ}{Z}

\newcommand{\discretisationn}{k}
%%%%%%%%%%%%%%%%%%%%%%%%%%%%%%%
%%%%%%%%%%%%%%%%%%%%%%%%%%%%%%%

\newcommand{\bC}{\mathbb{C}}
\newcommand{\bD}{\mathbb{D}}
\newcommand{\bE}{\mathbb{E}}
\newcommand{\bI}{\mathbb{I}}
\newcommand{\bM}{\mathbb{M}}
\newcommand{\bN}{\mathbb{N}}
\newcommand{\bP}{\mathbb{P}}
\newcommand{\bQ}{\mathbb{Q}}
\newcommand{\bR}{\mathbb{R}}
\newcommand{\BR}{\mathbb{R}}
\newcommand{\bT}{\mathbb{T}}
\newcommand{\bU}{\mathbb{U}}
\newcommand{\bZ}{\mathbb{Z}}

\newcommand{\cB}{\mathcal{B}}
\newcommand{\cC}{\mathcal{C}}
\newcommand{\cL}{\mathcal{L}}
\newcommand{\cM}{\mathcal{M}}
\newcommand{\cN}{\mathcal{N}}
\newcommand{\cP}{\mathcal{P}}
\newcommand{\cT}{\mathcal{T}}
\newcommand{\cR}{\mathcal{R}}

%%%%%% ADmacros
\newcommand{\Exc}{\mathcal{E}}
\newcommand{\SExc}{\Sigma(\Exc)}
\newcommand{\cadlag}{\textrm{c\`adl\`ag}}
\def\ShiftRestrict#1#2{#1\big|^{\from}_{#2}} % shifted to be supported on [0, b-a]
\def\shiftrestrict#1#2{#1|^{\from}_{#2}}
\def\RestrictShift#1#2{\prescript{\to}{#2}{\big|}#1} % shifted to be supported on [a-b, 0]
\def\restrictshift#1#2{\prescript{\to}{#2}|#1}
\def\Restrict#1#2{#1\big|_{#2}}
\def\restrict#1#2{#1|_{#2}}
\def\life{\zeta}
%%%%%%%

\newcommand{\cf}{\mathbf{1}}

\newcommand{\eps}{{\ensuremath{\varepsilon}}}
\newcommand{\besq}{\mathrm{BESQ}}

\newcommand{\ft}{\mathbf{t}}
\newcommand{\fs}{\mathbf{s}}

\newtheorem{lemma}{Lemma}
\newtheorem{proposition}[lemma]{Proposition}
\newtheorem{corollary}[lemma]{Corollary}
\newtheorem{theorem}[lemma]{Theorem}
\newtheorem{problem}{Problem}
\newtheorem{problem17a}{Problem 17a}
\newtheorem{aside}{Aside}
\newtheorem{claim}{Claim}

\newenvironment{pfofprop14}{\begin{trivlist}\item[] \textbf{Proof of Proposition \ref{prop14}.}}
                     {\hspace*{\fill} $\square$\end{trivlist}}

\newenvironment{pfofthmltp}{\begin{trivlist}\item[] \textbf{Proof of Theorem \ref{thmltp}.}}
                     {\hspace*{\fill} $\square$\end{trivlist}}

\newcommand{\ed}{\mbox{$ \ \stackrel{d}{=}$ }}
\newcommand{\giv}{\,|\,}
\newcommand{\convd}{\overset{d}{\underset{{n\rightarrow \infty}}{\longrightarrow}}}
\newcommand{\convdk}{\overset{d}{\underset{{k\rightarrow \infty}}{\longrightarrow}}}
\newcommand{\convask}{\overset{a.s.}{\underset{{k\rightarrow \infty}}{\longrightarrow}}}

\newcommand{\convv}{\overset{v}{\underset{{n\rightarrow \infty}}{\longrightarrow}}}
\newcommand{\convvz}{\overset{v}{\underset{{z\downarrow 0}}{\longrightarrow}}}
\newcommand{\convdz}{\overset{d}{\underset{{z\downarrow 0}}{\longrightarrow}}}

\title[Uniform control of local times of spectrally positive stable processes]{Uniform control of local times\\ of spectrally positive stable processes}

\author{N\MakeLowercase{\sc oah} F\MakeLowercase{\sc orman,} S\MakeLowercase{\sc oumik} P\MakeLowercase{\sc al,} D\MakeLowercase{\sc ouglas} R\MakeLowercase{\sc izzolo, and}  M\MakeLowercase{\sc atthias} W\MakeLowercase{\sc inkel}}

\address{\hspace{-0.42cm}Noah~Forman\\ Department of Statistics\\ University of Oxford\\ 24--29 St Giles'\\ Oxford OX1 3LB, UK\\ Email: noah.forman@gmail.com}             

\address{\hspace{-0.42cm}Soumik~Pal\\ Department of Mathematics \\ University of Washington\\ Seattle WA 98195, USA\\ {Email: soumikpal@gmail.com}}

\address{\hspace{-0.42cm}Douglas~Rizzolo\\ Department of Mathematical Sciences\\ University of Delaware\\ Newark DE 19716, USA\\ Email: drizzolo@udel.edu}

\address{\hspace{-0.42cm}Matthias~Winkel\\ Department of Statistics\\ University of Oxford\\ 24--29 St Giles'\\ Oxford OX1 3LB, UK\\ Email: winkel@stats.ox.ac.uk}

\keywords{Stable process, squared Bessel processes, CMJ process, local time approximation, excursion theory, restricted L\'evy process, H\"older continuity}

\subjclass[2010]{60G51, 60G52, 60J55, 60J80}

\date{\today}

\thanks{This research is partially supported by NSF grants DMS-1204840, DMS-1308340, DMS-1612483, UW-RRF
grant A112251, and EPSRC grant EP/K029797/1}

\begin{abstract} We establish two results about local times of spectrally positive stable processes. The first is a general approximation result, uniform in 
  space and on compact time intervals, in a model where each jump of the stable process may be marked by a random path. The second gives moment control on the H\"older
  constant of the local times, uniformly across a compact spatial interval and in certain random time intervals. For the latter, we introduce the notion of a 
  L\'evy process restricted to a compact interval, which is a variation of Lambert's L\'evy process confined in a finite interval and of Pistorius' doubly reflected
  process. We use the results of this paper to exhibit a class of path-continuous branching processes of Crump-Mode-Jagers type with continuum genealogical structure.
  A further motivation for this study lies in the construction of diffusion processes in spaces of interval partitions and $\bR$-trees, which we explore in 
  forthcoming articles. 
\end{abstract}

\maketitle

\vspace{-0.5cm}

\section{Introduction}

\noindent Consider a spectrally positive L\'evy process $X=(X_t,t\ge 0)$ with Laplace exponent $\psi(\eta)=\eta^\stablebetawithout $, for some $\stablebetaminusonespecial\in(0,1)$. We mark each jump $(t,\Delta X_t)$ 
independently by a continuous random nonnegative path $\markZ_t=(\markZ_t(s),0\le s\le\Delta X_t)$ with $\markZ_t(0)=\markZ_t(\Delta X_t)=0$, using a marking kernel $x\mapsto\kappa_q(x,\cdot)$ that 
has the following self-similarity property for some $q>0$. If $\markZ$ has distribution $\kappa_q(1,\cdot)$, then $(x^{q}\markZ(s/x),0\le s\le x)$ has distribution 
$\kappa_q(x,\cdot)$. We consider the levels in the jump interval $[X_{t-},X_t]$ as the (vertical!) time interval of the path, associating $\markZ_t(s)$ with level $X_{t-}+s$. 
If we interpret each $\markZ_t(s)$ as a population size or \em mass \em at level $X_{t-}+s$, the aggregate at time $y$ associated with $(X_t,0\le t\le T)$ is given by
\begin{equation}\label{cmj}\markZ_{[0,T]}(y)=\sum_{0\le t\le T}\markZ_t(y-X_{t-}),\qquad y\in\mathbb{R},\ T\ge 0,
\end{equation}
with the convention that $\markZ_t(s)=0$ for $s\not\in(0,\Delta X_t)$. Structurally, (\ref{cmj}) is the same as the sum of characteristics in a 
Crump-Mode-Jagers process, see e.g. \cite{Jagers1974,Jagers1975}. Connections between 
the genealogy of certain CMJ processes (without characteristics) and L\'evy processes were established by Lambert and Uribe Bravo 
\cite{Lambert,LambertUribe2016arxiv}. L\'evy processes marked by Poisson processes were studied by Delaporte \cite{DelaporteII,Delaporte15}. Here, we take some of
those ideas further by allowing characteristics $\markZ_t$ that vary during the existence interval $[X_{t-},X_t]$ of the population indexed by $t\in[0,T]$. We focus on the 
local times of $X$ and their interpretation as the genetic diversity of the population. 

Boylan \cite{Boy1964} showed that $X$ possesses jointly continuous local times $(\ell^y(t),y\in\mathbb{R},t\ge 0)$. Barlow, Perkins and Taylor \cite{BPT1,BPT2} established
several uniform approximations of local times. Our first goal in this paper is to give a new intrinsic approximation of local times $\ell^y(t)$ using the marks at level $y$. We define for all $h>0$, $y\in\mathbb{R}$ and $T\ge 0$
\begin{equation}\label{mhyt} m_h(y,T)=\sum_{0\le t\le T}1{\{\markZ_t(y-X_{t-})\ge h\}}=\#\{0\le t\le T\colon \markZ_t(y-X_{t-})\ge h\}.
\end{equation}  
This sum can be seen as a sum over excursions of $X$ away from level $y$. Since $X$ is spectrally positive, each such excursion has (at most) one jump across level $y$,
so each excursion contributes just one term to the sum. 

\begin{theorem}\label{thm1} Let $\stablebetaminusonespecial  \in(0,1)$ and $q>\alpha$. Suppose that $\markZ\sim\kappa_q(1,\cdot)$ is $\pathholder$-H\"older continuous 
  for some $\pathholder\in(0,q)$, that the H\"older 
  constant\vspace{-0.2cm}
  $$D_\pathholder:=\sup_{0\le r<s\le 1}\frac{|\markZ(s)-\markZ(r)|}{|s-r|^\pathholder}$$
  has moments of all orders, and that $c:=\stablebetawith (\Gamma(1-\stablebetaminusonespecial))^{-1}\bE((\markZ(U))^{\stablebetaminusonewith /q})>0$, where $U$ is a uniform variable on $[0,1]$ that is  
  independent of $\markZ$. Then the following holds almost
  surely for a $\kappa_q$-marked spectrally positive stable L\'evy process with Laplace exponent $\psi(\eta)=\eta^{\stablebetawithout}$: 
  $$\lim_{h\downarrow 0}\,\sup_{0\le t\le T}\,\sup_{y\in\mathbb{R}}\left|\frac{h^{\stablebetaminusonewith /q}}{c}m_h(y,t)-\ell^y(t)\right|=0\qquad\mbox{for all $T>0$.}$$ 
\end{theorem}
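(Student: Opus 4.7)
My plan is to first pin down the scaling $h^{\alpha/q}$ and the constant $c$ via a first-moment computation, then obtain pointwise a.s.\ convergence through excursion theory at each fixed level $y$, and finally lift to uniform convergence by a Kolmogorov chaining argument in $y$ combined with monotonicity arguments in $t$ and $h$.

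\textbf{Step 1 (First moment, identification of $c$).} Applying the compensation formula to the marked Poisson random measure $\{(t,\Delta X_t,Z_t)\}$, with $\pi(dx)=\tfrac{\alpha(1+\alpha)}{\Gamma(1-\alpha)}x^{-2-\alpha}dx$ the L\'evy measure of $X$, gives
\[
\mathbb{E}[m_h(y,T)]=\mathbb{E}\!\int_0^T\!\!ds\!\int_0^\infty\!\pi(dx)\,\mathbf{1}\{0<y-X_{s-}<x\}\,\mathbb{P}\!\left(x^q Z\!\left(\tfrac{y-X_{s-}}{x}\right)\ge h\right),
\]
where $Z\sim\kappa_q(1,\cdot)$. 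Via the occupation-density formula for $X$, the substitution $v=(y-X_{s-})/x$, and the identity $\int_0^\infty s^{\alpha/q-1}\mathbb{P}(Y\ge s)\,ds=(q/\alpha)\mathbb{E}[Y^{\alpha/q}]$, one extracts $h^{\alpha/q}\mathbb{E}[m_h(y,T)]\to c\,\mathbb{E}[\ell^y(T)]$, which simultaneously pins down the scaling and verifies that the constant is exactly the $c$ in the statement.

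\textbf{Step 2 (Pointwise convergence and uniformity in $t$, $h$).} By It\^o's excursion theorem, the excursions of $X$ away from a fixed level $y$, indexed by local time $\ell^y$ at $y$, form a Poisson point process with intensity the It\^o excursion measure $n$. Spectral positivity guarantees each excursion contains at most one jump crossing $y$, and its attached mark is a measurable functional of the marked excursion. Hence $m_h(y,t)$ is a Poisson count over the local-time interval $[0,\ell^y(t)]$, with per-unit-local-time intensity $n_h\sim c\,h^{-\alpha/q}$ by Step~1. A Poisson strong law along $h_k=2^{-k}$, combined with Borel--Cantelli using the variance $n_h$ from Campbell's formula, yields $h^{\alpha/q}m_h(y,t)\to c\,\ell^y(t)$ a.s.\ for each $(y,t)$. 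Monotonicity of $t\mapsto m_h(y,t)$ together with monotonicity and continuity of $t\mapsto\ell^y(t)$ then upgrades this to uniform convergence in $t\in[0,T]$ via a Dini-type argument, and monotonicity of $h\mapsto m_h(y,t)$ allows interpolation between the dyadic $h_k$.

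\textbf{Step 3 (Uniformity in $y$ --- the main obstacle).} Uniformity in $y$ is obtained by a Kolmogorov chaining argument built from two ingredients: (i) $L^p$-bounds on $h^{\alpha/q}m_h(y,T)-c\,\ell^y(T)$ uniform in $y$, from the second moment (another application of the compensation formula) combined with the classical joint H\"older regularity of $(y,t)\mapsto\ell^y(t)$; and (ii) an $L^p$-modulus of continuity of $y\mapsto m_h(y,T)$. For (ii) the $\gamma$-H\"older hypothesis yields $|Z_t(y-X_{t-})-Z_t(y'-X_{t-})|\le D_\gamma(t)|y-y'|^\gamma$, where $D_\gamma(t)$ is the H\"older constant of the $t$-th marked path (whose law is determined by $\kappa_q$-scaling). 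Consequently the symmetric difference of $\{Z_t(y-X_{t-})\ge h\}$ and $\{Z_t(y'-X_{t-})\ge h\}$ is contained in the event that the crossing mark lies within $D_\gamma(t)|y-y'|^\gamma$ of $h$; moments of all orders for $D_\gamma$ together with an $h$-banded variant of the Step~1 computation give the required bound. The hard part is calibrating the chaining mesh against $h^{\alpha/q}$ so that the loss from the H\"older exponent $\gamma$ does not swamp the gain; the precise hypotheses $q>\alpha$, $\gamma\in(0,q)$, and moments of all orders for $D_\gamma$ are exactly what make this balance close.
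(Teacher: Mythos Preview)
Your overall skeleton --- identify $c$, recognise the Poisson structure in inverse local time, get lattice-wise convergence by Borel--Cantelli, then interpolate in $y$ via the $\gamma$-H\"older regularity of the marks --- is the paper's structure too. But Step~3 contains a real gap, and the missing idea is precisely what makes the argument close.

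Your modulus bound in (ii) estimates the symmetric difference of $\{Z_t(y-X_{t-})\ge h\}$ and $\{Z_t(y'-X_{t-})\ge h\}$ by the count of marks falling in a band of width $O(D_\gamma(t)|y-y'|^\gamma)$ around $h$. But differentiating your Step~1 identity $\bE[m_h(y,T)]\asymp h^{-\alpha/q}$ shows that the expected number of marks in a band of width $\epsilon$ around $h$ is of order $\epsilon\,h^{-\alpha/q-1}$; after multiplying by $h^{\alpha/q}$ this is $\epsilon/h$, which blows up as $h\downarrow 0$. The moment hypotheses on $D_\gamma$ do not help --- the obstruction is the density of marks near the threshold, not the size of the H\"older constant --- and no mesh calibration against $h^{\alpha/q}$ absorbs a bare factor $1/h$.

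The paper's resolution is to \emph{not} compare $m_h(y,\cdot)$ with $m_h(y',\cdot)$ at the same threshold. With lattice mesh $\mu(k)$, thresholds $\rho(k)\downarrow 0$, and the a.s.\ uniform H\"older bound $D_{[0,T]}$ of Proposition~\ref{propuniformholder}, one has the \emph{deterministic} sandwich
\[
m_{\rho(k-1)}(g_{k-1}(y),\lambda)\ \le\ m_{\rho(k)}(y,\lambda)\ \le\ m_{\rho(k+1)}(g_{k+1}(y),\lambda)
\]
for every $y$, as soon as $D_{[0,T]}(\mu(k\mp 1))^\gamma\le|\rho(k\mp 1)-\rho(k)|$. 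This trades a shift of level for a shift of threshold, and the resulting harmless multiplicative error $(\rho(k)/\rho(k\pm 1))^{\alpha/q}\to 1$ replaces the fatal $1/h$.

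Two further points of execution. The paper runs the Borel--Cantelli at an independent exponential time $\lambda$ (making $\ell^y(\lambda)$ exponential) and uses Poisson moderate deviations (Lemma~\ref{lm6}) to obtain tail bounds of order $h^\theta$ for arbitrary $\theta$; a variance/Chebyshev bound gives only a fixed polynomial rate in $h$, which does not survive a union bound over a lattice fine enough for the sandwich to apply. And all of the above is carried out on $\{\ell^y(\lambda)\ge r_0\}$; the regime of small local time requires a separate argument (letting $r_0\downarrow 0$ and controlling the contribution of the first few crossings), which your sketch does not address.
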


To prove this result, we use excursion theory. Some steps resemble arguments of
Khoshnevisan \cite{Kho-94}, who established rates of convergence of certain uniform approximations of the local times of Brownian motion. 

The case where the paths are squared Bessel bridges/excursions of dimension $\delta\in\bR$ seems most compelling, in view of the branching structure (with immigration or
emigration) of these processes that make them natural models for population size evolution, and in view of the Ray-Knight theorems. Specifically, the cases $\delta=0$ and 
$\delta=2$ appear in Ray-Knight theorems as distributions of Brownian local time as a process in the spatial variable. Generalisations to perturbed Brownian
motions include other parameters, see e.g. \cite{Perman1996,RY}. Squared Bessel processes and their excursions and bridges have self-similarity parameter $q=1$.  
H\"older continuity of these processes is well-known, and we show in Corollary \ref{cor general bridge holder} that the moment assumption of Theorem 
\ref{thm1} also holds for $\pathholder\in(0,1/2)$. We restate the special case of Theorem \ref{thm1} as Theorem \ref{thm1besq}.

Other interesting paths include some that are deterministic functions of the jump height, so that marking does not introduce additional randomness. The following example 
is obtained for deterministic $z(s)=s\wedge(1-s)$, $0\le s\le 1$, and $\kappa(1,\cdot)=\delta_z$, using $q=1$ to scale $z$ to $z_x(s)=s\wedge(x-s)$, $0\le s\le x$, and $\kappa(x,\cdot)=\delta_{z_x}$:

\begin{corollary}\label{corhat} Consider a spectrally positive stable process of index $\stablebetawithout \in(1,2)$ and let 
$$m_h^\circ(y,T)=\#\{0\le t\le T\colon X_{t-}+h\le y\le X_t-h\},\qquad h>0,\ y\in\mathbb{R},\ T\ge 0.$$
Let $c^\circ=2^\alpha\Gamma(1-\alpha)$. Then the following holds almost
  surely for all $T>0$
  $$\lim_{h\downarrow 0}\,\sup_{0\le t\le T}\,\sup_{y\in\mathbb{R}}\left|c^\circ h^{\stablebetaminusonewithout }m_h^\circ(y,t)-\ell^y(t)\right|=0.$$ 
\end{corollary}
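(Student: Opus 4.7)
The plan is to deduce Corollary \ref{corhat} directly from Theorem \ref{thm1} by specialising to the deterministic marking kernel $\kappa(x,\cdot)=\delta_{z_x}$, where $z_x(s)=s\wedge(x-s)$ for $s\in[0,x]$. Since this marking is a measurable function of the jump height alone, it fits the framework of the introduction, and the only real work is to (i) check the hypotheses of Theorem \ref{thm1}, (ii) compute the constant $c$ explicitly, and (iii) identify $m_h(y,T)$ with $m_h^\circ(y,T)$.

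First, I would verify self-similarity with $q=1$: the map $s\mapsto x\cdot z(s/x)=x\big((s/x)\wedge(1-s/x)\big)=s\wedge(x-s)=z_x(s)$ shows that $(x^q Z(s/x),0\le s\le x)$ under $\kappa(1,\cdot)=\delta_z$ is precisely $\delta_{z_x}=\kappa(x,\cdot)$. Since $z$ is $1$-Lipschitz on $[0,1]$, we have $D_\gamma\le 1$ deterministically for every $\gamma\in(0,1)$, so the moment assumption on $D_\gamma$ is trivially satisfied; and the condition $q>\alpha$ holds because $q=1$ and $\alpha\in(0,1)$.

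Next I would compute the constant. With $Z=z$ and $U$ uniform on $[0,1]$,
\begin{equation*}
\bE\bigl((Z(U))^{\alpha/q}\bigr)=\bE\bigl((U\wedge(1-U))^\alpha\bigr)=2\int_0^{1/2}u^\alpha\,du=\frac{1}{2^\alpha(1+\alpha)},
\end{equation*}
so that $c=\stablebetawith \Gamma(1-\alpha)^{-1}\cdot 2^{-\alpha}(1+\alpha)^{-1}=\bigl(2^\alpha\Gamma(1-\alpha)\bigr)^{-1}=1/c^\circ>0$, confirming the remaining positivity hypothesis in Theorem \ref{thm1}.

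Finally I would rewrite $m_h(y,T)$ in the deterministic case. Since $Z_t(y-X_{t-})=(y-X_{t-})\wedge(X_t-y)$ when $y\in[X_{t-},X_t]$ and $0$ otherwise, the condition $Z_t(y-X_{t-})\ge h$ is equivalent to $X_{t-}+h\le y\le X_t-h$. Hence $m_h(y,T)=m_h^\circ(y,T)$, and substituting $h^\alpha/c=c^\circ h^\alpha$ in the conclusion of Theorem \ref{thm1} yields the corollary. There is essentially no obstacle here once Theorem \ref{thm1} is in hand; the only point requiring a moment of care is the computation of $\bE((U\wedge(1-U))^\alpha)$ and the observation that the Lipschitz path $z$ trivially satisfies the Hölder moment assumption.
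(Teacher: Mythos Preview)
Your proposal is correct and follows the same approach as the paper: apply Theorem~\ref{thm1} with the deterministic hat-shaped mark $z(s)=s\wedge(1-s)$, $q=1$, and compute $c=1/c^\circ$. Your write-up is in fact more thorough than the paper's, which simply computes the constant and asserts that the rest ``follows straight from Theorem~\ref{thm1}''; your explicit verification of the self-similarity, the trivial H\"older-moment hypothesis, and the identification $m_h=m_h^\circ$ are all correct and welcome.
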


This is clearly related to approximations by counting upcrossings \cite{CLJPT,Kho-94} or by jump heights. However, while we could obtain the latter by setting 
$z(s)=1$, this violation of the continuity requirement on $z$ seriously affects our arguments. The former cannot be expressed in this form, since whether an excursion 
contains an upcrossing depends not only on the single jump across the level, but on the pre- or post-jump parts of the excursion, as an upcrossing may be achieved by 
multiple upward jumps.

Let us turn to our second main result. We denote by $\tau^y(s)=\inf\{t\ge 0\colon\ell^y(t)>s\}$ the inverse local time of $X$ in level $y\in\bR$. It is a well-known 
consequence of the scaling property of $X$ that $\tau^0$ is a stable subordinator and hence has infinite mean. To obtain the following result, we restrict $X$ to space $[0,\thresholda]$ by making jumps across $\thresholda$ stop short at $\thresholda$, by
excluding excursions above $\thresholda$ and below 0, and similarly treating jumps from below 0. Similar processes have been studied by Lambert \cite{Lambert2000}, Pistorius 
\cite{Pis03}, and indeed very recently Lambert and Uribe Bravo \cite{LambertUribe2016arxiv} proceeded similarly by restricting just at the upper boundary. See Section \ref{sectrestricted} for 
a precise discussion. We can state the theorem without this notion of a restricted L\'evy process.  

\begin{theorem}\label{thmltholdermoment} Let $\stablebetaminusonespecial\in(0,1)$, $\thresholda>0$, $\localtimeholder\in(0,\stablebetaminusonewith /2)$ and $S$ a random variable
  with moments of all orders. Then the local times of a spectrally positive stable process of index $\stablebetawithout$ have a uniform H\"older constant\vspace{-0.1cm}
  $$D_\localtimeholder^{[0,\thresholda]}(\tau^0(S)):=\sup_{0\le t\le\tau^0(S)}\,\sup_{0\le x<y\le \thresholda}\frac{|\ell^y(t)-\ell^x(t)|}{|y-x|^\localtimeholder}$$
  with moments of all orders.
\end{theorem}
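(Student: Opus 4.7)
The plan is to pass to the restricted L\'evy process $\hat X$ on $[0,\thresholda]$ (Section \ref{sectrestricted}), use excursion theory at level $0$ to represent spatial increments of local time as compensated Poisson integrals, derive $L^{2p}$ moment bounds via a Rosenthal-type estimate, and conclude via Kolmogorov's continuity criterion.

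For $y\in(0,\thresholda)$, the local times of $X$ and $\hat X$ coincide up to a time change at $0$, and the inverse local time $\tau^0$ of $X$ corresponds (under this time change) to the inverse local time $\hat\tau^0$ of $\hat X$. Because $\hat X$ is confined to the compact interval $[0,\thresholda]$, excursion theory at $0$ yields a Poisson point process of excursions with intensity $ds\otimes\hat n(de)$ under an It\^o measure $\hat n$ with very good integrability: in particular $\hat n(\zeta^k)<\infty$ for every $k\ge 1$, so $\hat\tau^0$ is a subordinator with moments of all orders and $\hat\tau^0(S)$ has moments of all orders. Writing $L^y(e)$ for the local time of an excursion $e$ at level $y$,
\[
\ell^y(\tau^0(s)) \;=\; \int_0^s\!\!\int_\Exc L^y(e)\,N(du,de),\qquad y\in(0,\thresholda).
\]

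The key estimate is
\[
\hat n\bigl((L^y(e)-L^x(e))^{2p}\bigr)\;\le\;C_{p,\thresholda}\,(y-x)^{p\alpha},\qquad 0\le x<y\le\thresholda,\ p\ge 1,
\]
established for $p=1$ via the $\alpha$-H\"older regularity in space of the Green function of $\hat X$ killed at $0$ (which controls $\hat n(L^yL^x)$ and $\hat n((L^y)^2)$), and for higher $p$ by decomposing an excursion at its first passage of $[x,y]$, applying the strong Markov property, and exploiting the self-similarity of the spectrally positive $(1+\alpha)$-stable process. A Rosenthal-type moment inequality for compensated Poisson integrals---combined with the Lipschitz drift bound $\hat n(L^y-L^x)\le C(y-x)$ and all moments of $S$---then gives
\[
\mathbb{E}\bigl[(\ell^y(\tau^0(S))-\ell^x(\tau^0(S)))^{2p}\bigr]\;\le\;C'_{p,\thresholda,S}\,(y-x)^{p\alpha}.
\]
To upgrade to $\sup_{0\le t\le\tau^0(S)}$ one applies Doob's $L^{2p}$ inequality to the compensated compound-Poisson martingale and absorbs the current-excursion contribution via a maximal-in-excursion version of the bound above. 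Kolmogorov's continuity criterion applied to the random field $(x,y)\mapsto\sup_{t\le\tau^0(S)}|\ell^y(t)-\ell^x(t)|$ on $[0,\thresholda]$ then produces moments of all orders for $D_\localtimeholder^{[0,\thresholda]}(\tau^0(S))$ for every $\localtimeholder<\stablebetaminusonewith/2$.

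The main obstacle is the excursion moment bound at the level of the infinite It\^o measure $\hat n$: one cannot directly integrate a finite-time Fourier bound because $\hat n$ has infinite mass, and the restriction to $[0,\thresholda]$ is essential to gain integrability. Extracting the sharp exponent $p\alpha$ uniformly in $p$ requires a careful interplay of strong Markov, stable self-similarity, and Green-function regularity for the restricted process; a secondary but more routine difficulty is the passage from the endpoint $t=\tau^0(S)$ to the full supremum in $t$, which is handled by Doob combined with an excursion maximal estimate.
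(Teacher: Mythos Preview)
Your outline is a viable alternative but takes a substantially different route from the paper. The paper avoids the Poisson-integral/Rosenthal machinery entirely: it invokes Blumenthal--Getoor's exponential tail bound \cite[Proposition V.(3.28)]{BlG68},
\[
\bP\Big(\sup_{0\le t\le N}\big|\ell^y(t)-\ell^x(t)\big|>r\Big)\le 2e^N\exp\!\big(-Kr|y-x|^{-\alpha/2}\big),
\]
applied directly to the restricted process $X^\thresholda$, with the key computation being the short estimate $\varphi_\thresholda(x,y)\le\varphi(|y-x|)\le |y-x|^{\alpha/2}/2K$ via Simon's identity for the hitting time $T_y$ (Lemma~\ref{lm15}). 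This immediately yields all $L^p$ bounds, already contains the supremum over $t$, and passes to the random time $Q=\sigma^\thresholda_S=R^{[0,\thresholda]}(\tau^0(S))$ by an elementary Cauchy--Schwarz splitting using $\sigma^\thresholda_1\in\bigcap_p L^p$ from Proposition~\ref{prop12}. Kolmogorov--Chentsov then finishes.

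Your approach recovers the same endpoint moment bound through excursion-local-time moments $\hat n((L^y-L^x)^{2p})$ and Rosenthal. This is more hands-on and would work, but the two steps you yourself flag as delicate are precisely where the paper's route is cleaner: (i) the higher-$p$ excursion bound appeals to ``stable self-similarity'', yet the restricted process is not self-similar, so you must argue via the time-change back to the unrestricted process and be careful near the boundary $\thresholda$; (ii) passing from $t=\tau^0(S)$ to the full $\sup_t$ via Doob controls only the supremum over the local-time parameter $s$, and the within-excursion maximal contribution still needs a separate $\hat n$-estimate of $\sup_r|L^y(e|_{[0,r]})-L^x(e|_{[0,r]})|^{2p}$. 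Both are doable, but the Blumenthal--Getoor bound sidesteps them entirely, giving a much shorter proof.
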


Uniform H\"older continuity of local times was already obtained by Boylan \cite{Boy1964}. The novelty of our result is in the finiteness of moments of the H\"older 
constant, which appears to be a fundamental fact about L\'evy processes. The finiteness of H\"older constants is useful when dealing with families of scaled independent 
copies. To illustrate this point (in the context of Theorem \ref{thm1}, see Proposition \ref{propuniformholder}) and other aspects of the methods used in this paper, we devote 
the remainder of this introduction to the exploration of some auxiliary results, which may also 
be of independent interest. In particular, we will apply some of the auxiliary results to study some properties of the process $(y,t)\mapsto \markZ_{[0,t]}(y)$ of 
(\ref{cmj}).

Consider the It\^o excursion measure $n$ of $X$ away from $0$. Denote by $A$ and $B$ the undershoot and overshoot of the unique jump across level 0 under $n$, by
$H=A+B$ the jump size and by $U=A/H$ the relative undershoot. The classical study of excursions away from $0$ was recently complemented by Pardo et al. \cite{PPR15arxiv}.
In the stable case, we can take their study further and establish the following result. Symmetry under time-reversal is due to Getoor and Sharpe \cite{GetoShar81}.

\begin{proposition}\label{propunif} We have $n(A\in dy,B\in dz)=\stablebetawith\stablebetaminusonewith(\Gamma(1-\stablebetaminusonespecial))^{-1}(y+z)^{-\stablebetaminusonespecial-2}dzdy$ and
  $n(H\in dx,U\in du)=\stablebetawith\stablebetaminusonewith(\Gamma(1-\stablebetaminusonespecial))^{-1}x^{-\stablebetaminusonespecial-1}dxdu$. In particular, under $n(\,\cdot\,|\,H=x)$, 
  $U$ is uniform on $[0,1]$. Under $n(\,\cdot\,|\,A=y,B=z)$, the pre- and post-jump parts of the excursion are independent. The post-jump
  part is a stable process starting from $z$ until hitting 0. The pre-jump part is the negative of the time-reversal of a stable process starting from $y$
  until hitting~0.
\end{proposition}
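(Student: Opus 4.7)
The plan is to derive the four claims by combining It\^o's excursion theory, the compensation formula, the strong Markov property, and Getoor--Sharpe time-reversal. I would first establish the joint density of $(A,B)$ under $n$, then obtain the $(H,U)$-formula by change of variables, and finally describe the pre- and post-jump segments.

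For the joint density, because $X$ is spectrally positive, every excursion that crosses $0$ does so via a single jump, so crossing jumps can be enumerated excursion-by-excursion. Writing $\tau^0(s) := \inf\{t\ge 0 : \ell^0(t) > s\}$ for the inverse local time of $X$ at~$0$, It\^o's excursion formula gives, for any nonnegative Borel $g$ and any $s>0$,
\[
s\cdot n(g(A,B)) \;=\; \bE\Biggl[\sum_{\substack{t\le\tau^0(s)\\X_{t-}<0\le X_t}} g(-X_{t-},X_t)\Biggr].
\]
The compensation formula for the jumps of $X$, after substitution $u=-X_{r-}$, $z=X_{r-}+x$, together with the occupation density formula, recasts the right side as
\[
\int_0^\infty\!\int_0^\infty g(u,z)\,\nu(u+z)\,\bE\bigl[\ell^{-u}(\tau^0(s))\bigr]\,du\,dz,
\]
where $\nu(dx)=c_\alpha x^{-2-\alpha}dx$ with $c_\alpha=\alpha(1+\alpha)/\Gamma(1-\alpha)$ (the unique L\'evy density matching $\psi(\eta)=\eta^{1+\alpha}$). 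A second appeal to It\^o's theorem yields $\bE[\ell^{-u}(\tau^0(s))] = s\cdot n(\ell^{-u})$, and the self-similarity $c^{-1/(1+\alpha)}X_{ct}\stackrel{d}{=}X_t$, transferred to excursions and local times, shows that $u\mapsto n(\ell^{-u})$ is constant in $u>0$. With the canonical normalization of the local time at~$0$, this constant equals~$1$, and the joint density follows. The $(H,U)$-formula is then a Jacobian change of variables, and uniformity of~$U$ given~$H$ is immediate.

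For the description of the pre- and post-jump parts, the strong Markov property of $X$ applied at the unique crossing time $T$ identifies the post-jump path: conditionally on $B=z$, it evolves as a stable process started at $z$ and killed on first hitting $0$ (an a.s.\ finite time, since $X$ descends continuously from above). For the pre-jump path, Getoor--Sharpe's time-reversal duality between $X$ and $-X$, both spectrally stable of the same index, identifies its time-reversal --- which goes from $-A$ to~$0$ --- with the negative of a stable process started at~$A$ and run until~$0$. Conditional independence of the two segments given~$(A,B)$ is a consequence of the strong Markov property at $T$ within the It\^o excursion framework.

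The main obstacle I anticipate is pinning down the constant $n(\ell^{-u})=1$: scaling alone yields only that $n(\ell^{-u})$ is independent of $u$, and reconciling this with the explicit $c_\alpha$ requires either a direct computation tied to $\psi(\eta)=\eta^{1+\alpha}$ or a reduction to an excursion identity from the literature on spectrally positive stable processes, such as the framework of Pardo, P\'erez and Rivero cited in the paper.
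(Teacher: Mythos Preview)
Your route is correct and genuinely different from the paper's. The paper does not argue via the compensation formula and occupation density at all; instead it invokes Theorem~3 of Pardo--P\'erez--Rivero, which expresses $\widehat n$ in terms of the excursion measure $\underline{\widehat n}$ of the reflected process $\widehat X-\widehat I$, plugs in the explicit stable scale function $W(x)=x^\alpha/\Gamma(1+\alpha)$ and the entrance law $\underline{\widehat n}(t<T_0,\omega(t)\in dx)=c' u(t,x)\,dx$ with $u(t,x)=p_x(t)$ the density of a stable-$(1/(1+\alpha))$ subordinator, and then lets $t\downarrow 0$ by dominated convergence. This yields the density of $(A,B)$ up to an unknown multiplicative constant $c'$, which is then fixed by computing $n(1-e^{-\xi T_0})$ via the pre-/post-jump decomposition and matching it against the Laplace exponent $(1+\alpha)\xi^{\alpha/(1+\alpha)}$ of $\tau^0$ from Lemma~\ref{tauzero}.

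Your approach is more elementary in that it bypasses the reflected-process machinery entirely: compensation plus occupation density give $n(g(A,B))=\int\!\!\int g(u,z)\,\nu(u+z)\,n(\ell^{-u})\,du\,dz$ directly, and scaling correctly shows $n(\ell^{-u})$ is constant. Regarding your anticipated obstacle, the constant can be pinned down without appealing to Pardo--P\'erez--Rivero and without circularity: from the master formula one has $u^q(-u)=\Theta(q)^{-1}\,n\bigl(\int_0^\zeta e^{-qr}\,d\ell^{-u}_r\bigr)$, while the strong Markov property at $T_{-u}$ gives $u^q(-u)=\bE_0[e^{-qT_{-u}}]\,u^q(0)=e^{-uq^{1/(1+\alpha)}}/\Theta(q)$; combining and letting $q\downarrow 0$ yields $n(\ell^{-u})=1$. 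This is precisely the ``canonical normalisation'' you mention, made concrete. The paper's route has the advantage of situating the computation within the general PPR framework (useful beyond the stable case), while yours is shorter and uses only standard It\^o excursion tools. For the pre-/post-jump descriptions both proofs proceed identically via the Markov property under $n$ and Getoor--Sharpe time-reversal.
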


Recall that Theorem \ref{thm1} assumes the existence of moments for the H\"older constants of the random paths under $\kappa_q(1,\cdot)$. The $\kappa_q$-marked stable
process contains a countably infinite collection of scaled independent copies of this random path. We can use the moments on the H\"older constants assumed in Theorem \ref{thm1} in a simple Poisson
sum over jumps to obtain uniform H\"older constants (not with all moments since the stable L\'evy measure has a heavy tail). 

\begin{proposition}\label{propuniformholder} In the setting of Theorem \ref{thm1}, the uniform H\"older constant for all paths $\markZ_t$ is finite a.s.: for all $T>0$, 
  $$D_{[0,T]}:=\sup_{0\le t\le T}\,\sup_{0\le x<y\le\Delta X_t}\frac{|\markZ_t(y)-\markZ_t(x)|}{|y-x|^\pathholder}<\infty.$$
\end{proposition}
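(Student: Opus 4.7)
The plan is to reduce to a marked Poisson point process argument. The naive attempt to bound the supremum by a sum fails because $\int_0^\infty x^{q-\pathholder}\nu(dx)$ always diverges for the stable L\'evy measure $\nu(dx)\propto x^{-2-\alpha}dx$ (divergent at $0$ if $q-\pathholder\le 1+\alpha$ and at $\infty$ otherwise), so we instead control the supremum by counting ``bad'' jumps, which is a much weaker condition.

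First I would use the self-similarity of $\kappa_q$ to isolate the scaling. Writing $\markZ_t(s)=(\Delta X_t)^q\widetilde{\markZ}_t(s/\Delta X_t)$ with $\widetilde{\markZ}_t\sim\kappa_q(1,\cdot)$ independent across $t$, the substitution $u_i=s_i/\Delta X_t$ yields
\[
\sup_{0\le x<y\le\Delta X_t}\frac{|\markZ_t(y)-\markZ_t(x)|}{|y-x|^\pathholder}=(\Delta X_t)^{q-\pathholder}D_\pathholder^{(t)},
\]
where $D_\pathholder^{(t)}$ are i.i.d.\ copies of $D_\pathholder$, independent of $(X_s,s\ge 0)$. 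Hence $D_{[0,T]}=\sup_{0\le t\le T}(\Delta X_t)^{q-\pathholder}D_\pathholder^{(t)}$.

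Next, the collection $\{(t,\Delta X_t,D_\pathholder^{(t)}):\Delta X_t>0,\ 0\le t\le T\}$ forms a Poisson point process on $[0,T]\times(0,\infty)^2$ with intensity $dt\otimes\nu(dx)\otimes\mu(dy)$, where $\mu$ is the law of $D_\pathholder$. It suffices to show that the number $N$ of points satisfying $x^{q-\pathholder}y>1$ is a.s.\ finite, since then $D_{[0,T]}\le 1\vee\max_{i\le N}(\Delta X_{t_i})^{q-\pathholder}D_\pathholder^{(t_i)}<\infty$ a.s. The count $N$ is Poisson with mean
\[
\lambda=T\int_0^\infty\bP\bigl(D_\pathholder>x^{-(q-\pathholder)}\bigr)\nu(dx).
\]
Split the integral at $x=1$. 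The $[1,\infty)$-piece is bounded by $\nu([1,\infty))<\infty$. For the $(0,1)$-piece, choose any integer $k>(1+\alpha)/(q-\pathholder)$, which exists because $q>\pathholder$, and apply Markov's inequality $\bP(D_\pathholder>u)\le u^{-k}\bE[D_\pathholder^k]$ (finite by hypothesis). This gives integrand bounded by a constant times $x^{k(q-\pathholder)-2-\alpha}$, which is integrable on $(0,1)$ precisely because $k(q-\pathholder)-2-\alpha>-1$.

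Hence $\lambda<\infty$, so $N<\infty$ a.s., and the conclusion follows. The step I expect to be the subtle one is the choice of threshold: the argument would fail if we tried to bound the sum $\sum_t(\Delta X_t)^{q-\pathholder}D_\pathholder^{(t)}$ directly or to use a union bound over all jumps, but counting only the points exceeding level one converts the divergent intensity into a tail condition mild enough to be absorbed by the moments of $D_\pathholder$. No other technical difficulties arise; the argument also makes it clear why one does not expect all moments on $D_{[0,T]}$, since the tail of $(\Delta X_t)^{q-\pathholder}$ at large $x$ governs the tail of the sup and is controlled only by the heavy stable L\'evy tail.
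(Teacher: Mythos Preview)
Your proof is correct. The paper's argument is nearly identical: it uses the same scaling $d_t=(\Delta X_t)^{q-\gamma}D_\gamma^{(t)}$ and the same split at jump size $1$, but controls the small jumps by bounding $\sup_{\Delta X_t\le 1}d_t^p\le\sum_{\Delta X_t\le 1}d_t^p$ and showing this sum has finite expectation via the compensation formula---which produces exactly the same integral $\int_0^1 x^{p(q-\gamma)-\alpha-2}dx$ and the same condition $p>(1+\alpha)/(q-\gamma)$---rather than by counting exceedances of a fixed level. So your remark that ``the naive attempt to bound the supremum by a sum fails'' is only half right: summing the $d_t$ themselves diverges, but summing $d_t^p$ for large enough $p$ works and is precisely the paper's device. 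Both routes are equally short and rest on the same ingredients; yours is arguably a touch cleaner, and your closing observation about why $D_{[0,T]}$ should not have all moments (the heavy tail of large jumps) is also made in the paper.
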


This is remarkable, since the unbounded variation of $X$ means that the jump heights are not summable, hence the time intervals of the processes $\markZ_t$, $0\le t\le T$, add
up to infinite length. We can further improve on this proposition, as follows, to facilitate the aggregation (\ref{cmj}) of mass at level $y$, for all levels
$y\in\mathbb{R}$.

\begin{proposition}\label{proppiling} In the setting of Theorem \ref{thm1}, let $\gamma<q-\alpha$. Then the set $\{t\in[0,T]\colon\Delta X_t>0\}$ of jump times may 
  a.s.\ be partitioned into sequences 
  $\{J^k_j,\ j\ge 1\}$, $k\ge 1$, in such a way that 
 \begin{enumerate}
  \item the jump intervals $[X(J^k_j-),X(J^k_j)]$, $j\ge 1$, are disjoint for each $k\ge 1$;
  \item H\"older constants $\displaystyle D_{[0,T]}^{k}=\sup_{j\ge 1}\sup_{0\le x<y\le\Delta X(J^k_j)}\frac{|\markZ_{J^k_j}(y)-\markZ_{J^k_j}(x)|}{|y-x|^\pathholder}$ 
    are summable in $k\ge 1$.
 \end{enumerate} 
\end{proposition}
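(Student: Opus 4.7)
The plan is to split the jump times by dyadic size class, apply interval-graph coloring within each class to extract pairwise disjoint-interval subsequences, and verify almost-sure summability of the resulting maximum H\"older constants by combining excursion theory with moment bounds.

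First, for $k\in\bZ$ set $J_k:=\{t\in[0,T]:\Delta X_t\in(2^{-k},2^{-k+1}]\}$. Each $J_k$ is a.s.\ finite, empty for sufficiently negative $k$ (since $X$ is a.s.\ bounded on $[0,T]$), and their union is $\{t\in[0,T]:\Delta X_t>0\}$. By the self-similarity of $\kappa_q$, each $\markZ_t$ has $\pathholder$-H\"older constant $(\Delta X_t)^{q-\pathholder}D_\pathholder^{(t)}$ for an i.i.d.\ copy $D_\pathholder^{(t)}$ of $D_\pathholder$; for $t\in J_k$ this is bounded by $2^{(1-k)(q-\pathholder)}M_k$ with $M_k:=\max_{t\in J_k}D_\pathholder^{(t)}$.

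Second, within each $J_k$ the finite family $I_t:=[X_{t-},X_t]$ forms an interval graph. Interval graphs are perfect, so the chromatic number equals the clique number $\chi_k:=\sup_{y\in\bR}\#\{t\in J_k:y\in I_t\}$; partition $J_k$ into $\chi_k$ colour classes of pairwise disjoint intervals $J^{k,1},\ldots,J^{k,\chi_k}$ and re-enumerate all non-empty $J^{k,l}$ as a single sequence to produce the required $\{J^k_j:j\geq 1\}$, $k\geq 1$. The sup-H\"older constant of $J^{k,l}$ is at most $2^{(1-k)(q-\pathholder)}M_k$, so the task reduces to showing
\[
\sum_k \chi_k\, M_k\, 2^{-k(q-\pathholder)}<\infty\quad\text{a.s.,}
\]
for which it is enough to prove the expectation is finite.

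Third, by Proposition~\ref{propunif} and the Poisson structure of excursions of $X$ away from each level $y$, the count $N_k(y):=\#\{t\in J_k:y\in I_t\}$ is, conditionally on $\ell^y(T)$, Poisson with mean $\ell^y(T)\cdot n(H\in(2^{-k},2^{-k+1}])\asymp\ell^y(T)\cdot 2^{k\alpha}$. Since $N_k(\cdot)$ is piecewise constant in $y$ with at most $2|J_k|$ discontinuities and $|J_k|$ is Poisson with mean $\asymp 2^{k(1+\alpha)}$, a union bound over the step points combined with Poisson concentration and moment control on $\sup_y\ell^y(T)$ yields $\bE[\chi_k^p]\lesssim 2^{kp\alpha}$ up to polynomial factors. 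The independent estimate $\bE[M_k^{p'}]\leq\bE[|J_k|]\,\bE[D_\pathholder^{p'}]\lesssim 2^{k(1+\alpha)}$, together with H\"older's inequality with exponents $p=1+\epsilon$ and $p'=(1+\epsilon)/\epsilon$, gives $\bE[\chi_k M_k]\,2^{-k(q-\pathholder)}\lesssim 2^{-k[(q-\pathholder-\alpha)-(1+\alpha)\epsilon/(1+\epsilon)]}$. Thanks to the hypothesis $q-\pathholder>\alpha$, choosing $\epsilon>0$ small enough makes this summable.

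The hard part will be the tight bound $\bE[\chi_k]\lesssim 2^{k\alpha}$: the trivial inequality $\chi_k\leq|J_k|\asymp 2^{k(1+\alpha)}$ falls a full order of magnitude short, so the argument must really exploit the spatial distribution of jumps of $X$ via the excursion measure $n$ (and moment control on local times, e.g.\ as in Theorem~\ref{thmltholdermoment}).
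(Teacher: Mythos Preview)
Your dyadic-class-plus-interval-coloring strategy is a reasonable alternative to the paper's approach, and the reduction to $\sum_k \chi_k M_k\, 2^{-k(q-\gamma)}<\infty$ is correct, as is the moment bound on $M_k$. The gap is in your key estimate $\bE[\chi_k^p]\lesssim 2^{kp\alpha}$. Your claim that $N_k(y)$ is Poisson conditionally on $\ell^y(T)$ is not correct: since $T$ is deterministic, $\ell^y(T)$ is a functional of excursion lengths, which under $n$ are \emph{not} independent of the crossing jump height $H$; hence $\ell^y(T)$ is correlated with the count of excursions having $H$ in your dyadic range. Your union bound is then taken over the random endpoints of jumps in $J_k$, where $N_k$ has no clean conditional law. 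The paper in fact never establishes a moment bound on $\sup_y m'_h(y,T)$; only the almost-sure statement of Proposition~\ref{propheight} is available, and your sketch does not bridge that gap.

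The paper proceeds differently: it orders \emph{all} jumps by decreasing size and piles them greedily. A pigeonhole argument shows that one endpoint of the largest jump $J^k_1$ in the $k$th pile is crossed by at least $\lfloor k/2\rfloor$ strictly larger jumps, whence Proposition~\ref{propheight} forces $\Delta X(J^k_1)\lesssim k^{-1/\alpha}$ almost surely (with random implied constant). Then $D^k_{[0,T]}$ is dominated by the supremum H\"older constant over all jumps of size at most $k^{\varepsilon-1/\alpha}$, whose expectation Lemma~\ref{cor:all_short_excs_holder} bounds by $C_\varepsilon k^{(\varepsilon-1/\alpha)(q-\gamma-\varepsilon)}$, which is summable since $q>\gamma+\alpha$. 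The crucial point is that the overlap estimate is used only almost surely, decoupled from the expectation bound on H\"older constants. Your approach can be repaired in the same spirit: invoke Proposition~\ref{propheight} directly to get $\chi_k\leq C\,2^{k\alpha}$ a.s.\ for a random $C$ and all sufficiently large $k$, and separately show $\bE\big[\sum_k M_k\, 2^{-k(q-\gamma-\alpha)}\big]<\infty$ via your $M_k$ bound; then $\sum_k \chi_k M_k\, 2^{-k(q-\gamma)}\leq C\sum_k M_k\, 2^{-k(q-\gamma-\alpha)}+(\text{finitely many terms})<\infty$ almost surely, without any moment control on $\chi_k$.
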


Let us return to the aggregate process (\ref{cmj}) of Crump-Mode-Jagers type. We begin by noting some properties that illustrate the genealogical complexity. Jagers' 
\cite{Jagers1974} framework is very general, but only covers branching processes with a discrete genealogy that can be represented as a subset of the infinite
Ulam-Harris tree $\mathbb{U}=\bigcup_{n\ge 0}\bN^n$. Lambert \cite{Lambert} used this framework to encode a class of CMJ processes in bounded variation L\'evy processes with negative drift, in such a way that the jumps correspond to the members of $\mathbb{U}$. Since all jumps happen at
local minima, there is a unique previous jump across that local minimum level that corresponds to the parent. From any given jump, this yields a finite number of ancestors down to 
level 0. 

In our framework, $X$ has unbounded variation, no jumps occur from local minima (see e.g. \cite[Chapter VII]{Ber1996}), hence no direct parent for any jump can be
identified in this way, but a collection of ancestral intervals and a continuum of limit levels between these can be put together to a continuum genealogy. This is very 
delicate and properly addressed in \cite{LambertUribe2016arxiv}. By marking the jumps of the stable process, we have by-passed the subtleties of the continuum genealogy 
and set up $\markZ_{[0,T]}$ directly building on the jump structure of the stable process, which by \cite{LambertUribe2016arxiv} allows the CMJ type interpretation. 

The results and methods we have discussed yield some properties of $\markZ_{[0,T]}$, while an exhaustive study of this interesting new class of processes is beyond 
the scope of the present paper. 

\begin{corollary}\label{corcmj} In the setting of Theorem \ref{thm1}, the process $y\mapsto \markZ_{[0,T]}(y)$ of \eqref{cmj} is a.s. $\pathholder$-H\"older.  
\end{corollary}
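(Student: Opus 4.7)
My plan is to use Proposition \ref{proppiling} to decompose $\markZ_{[0,T]}$ into countably many pieces whose H\"older constants are summable. First I fix $\pathholder$ so that Proposition \ref{proppiling} applies, i.e.\ $\pathholder < q - \stablebetaindex$ (if the $\pathholder$ of Theorem \ref{thm1} is larger, I shrink it, using that the relevant H\"older constant still has all moments). Proposition \ref{proppiling} then furnishes a partition $\{J^k_j : j\ge 1\}$, $k\ge 1$, of the jump times in $[0,T]$, with pairwise disjoint jump intervals $[X(J^k_j-),X(J^k_j)]$ for each fixed $k$ and with summable constants $D^k_{[0,T]}$. For each $k$ I set
\[ \markZ^k(y) := \sum_{j\ge 1} \markZ_{J^k_j}(y - X(J^k_j-)), \qquad y \in \mathbb{R}, \]
using the convention $\markZ_t(s) = 0$ for $s \notin (0,\Delta X_t)$. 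By disjointness at most one summand contributes at each $y$, and since $\markZ_t(0) = \markZ_t(\Delta X_t) = 0$ the function $\markZ^k$ is continuous on $\mathbb{R}$, vanishes at each sub-interval endpoint, and vanishes on the complement of $\bigcup_j [X(J^k_j-), X(J^k_j)]$.

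Next I verify that each $\markZ^k$ is globally $2 D^k_{[0,T]}$-H\"older on $\mathbb{R}$. For $y_1 < y_2$ in a common sub-interval, this is immediate from the definition of $D^k_{[0,T]}$. Otherwise, disjointness and the vanishing of $\markZ^k$ at sub-interval boundaries and outside the sub-intervals supply an intermediate point $y^* \in [y_1,y_2]$ with $\markZ^k(y^*)=0$ (e.g.\ the right endpoint of the sub-interval containing $y_1$, or any point of $[y_1,y_2]$ lying outside all sub-intervals); applying the within-sub-interval H\"older bound of $D^k_{[0,T]}$ to each of $|\markZ^k(y_1)-\markZ^k(y^*)|$ and $|\markZ^k(y_2)-\markZ^k(y^*)|$ gives
\[ |\markZ^k(y_2) - \markZ^k(y_1)| \le D^k_{[0,T]}\bigl(|y_2-y^*|^\pathholder + |y^*-y_1|^\pathholder\bigr) \le 2 D^k_{[0,T]} |y_2-y_1|^\pathholder. \]

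Finally, nonnegativity of $\markZ_t$ and Tonelli give $\markZ_{[0,T]}(y) = \sum_{k\ge 1} \markZ^k(y)$, and summing the per-level bounds yields
\[ |\markZ_{[0,T]}(y_2) - \markZ_{[0,T]}(y_1)| \le 2\Bigl(\sum_{k\ge 1} D^k_{[0,T]}\Bigr) |y_2-y_1|^\pathholder, \]
with the bracketed coefficient a.s.\ finite by Proposition \ref{proppiling}(2). The real work has already been done in Proposition \ref{proppiling}; the only delicate step here is the case distinction in the H\"older bound for $\markZ^k$ when $y_1$ and $y_2$ lie in different sub-intervals, which is resolved cleanly by inserting an intermediate zero of $\markZ^k$.
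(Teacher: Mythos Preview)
Your proof is correct and takes essentially the same approach as the paper: both deduce the H\"older bound $2\bigl(\sum_{k\ge 1}D^k_{[0,T]}\bigr)|y_2-y_1|^\pathholder$ directly from Proposition \ref{proppiling}. The paper's version is only cosmetically different in that it avoids defining the per-pile aggregates $\markZ^k$ and instead bounds $\sum_t|\markZ_t(y_2-X_{t-})-\markZ_t(y_1-X_{t-})|$ by the two sub-sums over $\{t:\markZ_t(y_2-X_{t-})\neq 0\}$ and $\{t:\markZ_t(y_1-X_{t-})\neq 0\}$, each of which has at most one term per pile by disjointness.
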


Recall that excursions of $X$ away from level $y$ have (at most) one jump across level $y$, while local time $\ell^y$ remains constant during each excursion. Therefore,
inverse local time $\tau^y$ is a natural time scale for processes $t\mapsto m_h(y,t)$, and indeed for $t\mapsto \markZ_{[0,t]}(y)$:

\begin{proposition}\label{propcmjstable} In the setting of Theorem \ref{thm1}, we have for each $y\in\bR$ the following subordinators.
  \begin{enumerate}\item[(i)] The process $s\mapsto \markZ_{[0,\tau^y(s)]}(y)-Z_{[0,\tau^y(0)]}(y)$ is a stable subordinator with index
    $\stablebetaminusonewith /q$, for each $y\in\bR$. Its Laplace exponent is  $\Theta(\xi)=c\Gamma(1-\stablebetaminusonewith /q)\xi^{\stablebetaminusonewith /q}$.
  \item[(ii)] The process $s\mapsto m_h(y,\tau^y(s))-m_h(y,\tau^y(0))$ is a Poisson process of rate $ch^{-\stablebetaminusonewith /q}$ independent of 
    $m_h(y,\tau^y(0))=1{\{Z_{T_{\ge y}}(y-X_{T_{\ge y}-})>h\}}$, where $T_{\ge y}=\inf\{t>0\colon X_t\ge y\}$.
  \end{enumerate} 
\end{proposition}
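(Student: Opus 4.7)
The plan is to apply It\^o's excursion theory of $X$ at level $y$, together with the distributional information in Proposition~\ref{propunif}.

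First, by the spatial homogeneity of $X$, the It\^o excursion measure of $X$ away from $y$ coincides, after the obvious vertical shift, with the measure $n$ of Proposition~\ref{propunif}. Since $X$ is spectrally positive, each excursion that crosses $y$ does so via exactly one upward jump, with undershoot $A$, overshoot $B$, height $H=A+B$, and relative undershoot $U=A/H$. By the self-similarity of the marking kernel $\kappa_q$, the mark on such a jump has the law of $s\mapsto H^q \widetilde{\markZ}(s/H)$ with $\widetilde{\markZ}\sim\kappa_q(1,\cdot)$ independent of $(H,U)$, so its value at level $y$ equals $H^q\widetilde{\markZ}(U)$. Combining Proposition~\ref{propunif} (which in particular says $U$ is uniform on $[0,1]$ under $n(\,\cdot\,|\,H=x)$) with the definition of $c$, a direct calculation gives
\[
n\bigl(H^q\widetilde{\markZ}(U)>r\bigr) \;=\; (1+\alpha)(\Gamma(1-\alpha))^{-1}\, r^{-\alpha/q}\, \bE\bigl[\widetilde{\markZ}(U)^{\alpha/q}\bigr] \;=\; c\, r^{-\alpha/q}.
\]

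Next, by the strong Markov property at the stopping time $\tau^y(0)=T_y:=\inf\{t\ge 0\colon X_t=y\}$, the excursions of $X$ away from $y$ occurring at local times $s>0$ form, in the local-time scale, a Poisson point process with characteristic measure $n$, independent of the path of $X$ and its marks on $[0,T_y]$. Independently marking each excursion then yields a Poisson point process of pairs (local time, mass at $y$) on $[0,\infty)\times(0,\infty)$ whose intensity has local-time marginal equal to Lebesgue measure and whose mass component has tail $cr^{-\alpha/q}$. For (i), this is precisely the L\'evy measure of a stable subordinator of index $\alpha/q$; the claimed Laplace exponent follows from
\[
\int_0^\infty (1-e^{-\xi r})\,\mu(dr) \;=\; \xi\int_0^\infty e^{-\xi r}\, c r^{-\alpha/q}\, dr \;=\; c\,\Gamma(1-\alpha/q)\,\xi^{\alpha/q}.
\]
For (ii), the count (in local time) of those points with mass at least $h$ is, by restriction of Poisson points, Poisson of rate $n(H^q\widetilde{\markZ}(U)\ge h)=ch^{-\alpha/q}$; its independence from $m_h(y,\tau^y(0))$ follows from the same application of the strong Markov property, since $m_h(y,\tau^y(0))$ depends only on the path and marks up to $T_y$.

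The main obstacle I anticipate is not deep but technical: a clean setup of the independently marked excursion Poisson point process, for which one either enlarges the probability space to carry the marks a priori or invokes an independent marking theorem after the fact, and then identifies $\tau^y(0)$ with $T_y$ (valid because spectrally positive stable processes of index $1+\alpha\in(1,2)$ are regular for points) so that $m_h(y,\tau^y(0))$ is measurable with respect to the pre-$T_y$ history and therefore independent of the later excursion structure.
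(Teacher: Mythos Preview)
Your proposal is correct and follows essentially the same route as the paper: both reduce to level $0$ by spatial homogeneity, use Proposition~\ref{propunif} to compute the tail $n(H^q\widetilde{Z}(U)>r)=cr^{-\alpha/q}$ from the joint density of $(H,U)$, identify this as the L\'evy measure of an $\alpha/q$-stable subordinator, and invoke the strong Markov property at $T_y$ for the independence claims. Your write-up is slightly more explicit about the integration-by-parts step for the Laplace exponent and about the underlying marked-excursion Poisson point process, but the argument is the same.
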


In particular, we deduce from (i) that $(\markZ_{[0,\tau^y(s)]}(y),0\le s<\ell^y(T))$ is a stopped stable subordinator. It is well-known \cite{csp,PitmYorPDAT} that its 
ranked relative jump sizes have Poisson-Dirichlet distribution, and that its $\diversityalpha$-diversity (for $\diversityalpha=\stablebetaminusonewith /q$), a proxy for 
genetic diversity, can be approximated by the number of jumps/blocks exceeding threshold $h$, i.e. by $m_h(y,T)$. Hence, Theorem \ref{thm1} shows that this 
$\diversityalpha$-diversity process coincides with the local time process of $X$, and Theorem \ref{thmltholdermoment} is a statement on the H\"older-continuity of the 
$\diversityalpha$-diversity process as a function of $y$, expressing the evolution of the diversity of subpopulations $\markZ_t(y-X_{t-})$, $0\le t\le T$, in 
$\markZ_{[0,T]}(y)$. See Ruggiero et al. \cite{RWF2013} for another recent study that exhibits continuous diversity processes.

A further motivation for this study lies in the construction of diffusion processes in spaces of interval partitions and $\bR$-trees, which we explore in 
forthcoming work, see \cite{Paper1}. 

This paper is structured, as follows. 
In Section \ref{secprel}, we discuss some fluctuation theory of L\'evy processes and prove Proposition \ref{propunif}. 
In Section \ref{secunifapprox}, we prove Proposition 
\ref{propuniformholder}, Proposition \ref{propcmjstable}, Theorem \ref{thm1} and Corollary \ref{corhat}. 
In Section \ref{secheight}, we establish Proposition \ref{proppiling} and Corollary \ref{corcmj} using a local time approximation by jump heights, which is uniform on a
sequence of refining spatial lattices in regions where local time is bounded below. Section \ref{sect2} establishes H\"older constants with all moments for squared 
Bessel processes, excursions and bridges, and related processes. This provides a class of examples to which the main results of this paper apply. Finally, Section
\ref{sectrestricted} studies spectrally positive L\'evy processes restricted to an interval and proves Theorem \ref{thmltholdermoment}.

\section{Preliminaries on spectrally positive stable L\'evy processes}\label{secprel}

\noindent Recall the \em occupation density formula \em that characterises the local times $(\ell^y(t),y\!\in\!\bR,t\!\ge\! 0)$ of $X$ 
$$\int_0^T f(X_t)dt=\int_{-\infty}^\infty f(y)\ell^y(T)dy\quad a.s.,\qquad\mbox{for all }f\colon\bR\rightarrow\bR\mbox{ bounded measurable, }T\ge 0.$$
The following lemma is easily obtained using the positivity parameter $1-1/\stablebetawith$, spatial homogeneity and scaling 
properties of $X$. See e.g.\ Bertoin \cite[Sections V.1,VIII.1]{Ber1996} for background.

\begin{lemma}\label{tauzero} The Laplace exponent of the inverse local time $\tau^0$ of $X$ is $\stablebetawith\xi^{1-1/\stablebetawith}$.
\end{lemma}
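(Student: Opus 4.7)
The plan is to use the $1/(1+\alpha)$-self-similarity of $X$ to fix the form of the Laplace exponent, and a standard resolvent computation to pin down the multiplicative constant; the positivity parameter $1-1/(1+\alpha)$ enters only in the latter step.

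First, I would use the scaling $(X_{ct})_{t\ge 0}\ed(c^{1/(1+\alpha)}X_t)_{t\ge 0}$ for every $c>0$, combined with a change of variables in the occupation density formula recalled at the start of this section, to obtain $\ell^0(ct)\ed c^{1-1/(1+\alpha)}\ell^0(t)$ jointly in $t\ge 0$. Inverting gives $\tau^0(cs)\ed c^{(1+\alpha)/\alpha}\tau^0(s)$, and the strong Markov property of $X$ at the stopping times $(\tau^0(s),s\ge 0)$, at which $X_{\tau^0(s)}=0$, endows $\tau^0$ with independent stationary increments. Together these identify $\tau^0$ as a stable subordinator of index $1-1/(1+\alpha)=\alpha/(1+\alpha)$, so $\Phi(\xi)=C\xi^{1-1/(1+\alpha)}$ for some $C>0$.

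To identify $C=1+\alpha$, I would use the classical identity $\bE[\int_0^\infty e^{-\xi t}d\ell^0(t)]=1/\Phi(\xi)$, which under the occupation-density normalization equals $\int_0^\infty e^{-\xi t}p_t(0)\,dt$, where $p_t$ is the continuous transition density of $X$. Self-similarity gives $p_t(0)=p_1(0)\,t^{-1/(1+\alpha)}$, hence $1/\Phi(\xi)=p_1(0)\Gamma(\alpha/(1+\alpha))\xi^{-(1-1/(1+\alpha))}$. A Fourier computation from the characteristic exponent of the stable law with Laplace exponent $\eta^{1+\alpha}$ (equivalently, from the positivity parameter $\rho=1-1/(1+\alpha)$), combined with the reflection formula $\Gamma(x)\Gamma(1-x)=\pi/\sin(\pi x)$, yields $p_1(0)=1/((1+\alpha)\Gamma(\alpha/(1+\alpha)))$, and hence $C=1+\alpha$ as required.

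The only substantive obstacle is the bookkeeping of normalizations: the local time appearing in Boylan's continuity result must be identified with the occupation-density local time so that $\bE[\int_0^\infty e^{-\xi t}d\ell^0(t)]=1/\Phi(\xi)$ is available, and $p_1(0)$ must be computed for the specific Laplace exponent $\psi(\eta)=\eta^{1+\alpha}$. As the authors indicate, the whole argument can alternatively be read off from the stable-process tabulations in Bertoin \cite[Sections V.1, VIII.1]{Ber1996}, in which case the proof reduces to a citation.
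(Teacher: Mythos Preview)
Your proposal is correct and follows essentially the same route as the paper, which gives no detailed proof but only indicates that the result is obtained from the positivity parameter, spatial homogeneity and scaling, with a reference to Bertoin \cite[Sections V.1, VIII.1]{Ber1996}. Your sketch simply unpacks these hints: scaling and the strong Markov property fix the index $\alpha/(1+\alpha)$ of $\tau^0$, and the resolvent identity $u^\xi(0)=1/\Phi(\xi)$ together with the computation of $p_1(0)$ (where the positivity parameter enters) determines the constant $1+\alpha$; you yourself note that all of this can be read off the cited tabulations, matching the paper exactly.
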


  Denote by $n$, $\widehat{n}$ and $\underline{\widehat{n}}$ the excursion measures of $X$, $\widehat{X}=-X$ and
  $\widehat{X}-\widehat{I}=S-X$ away from $0$, where $\widehat{I}_t=\inf\{\widehat{X}_s,s\le t\}$ and $S_t=\sup\{X_s,s\le t\}$, $t\ge 0$, are the running infimum and
  supremum of $\widehat{X}$ and $X$, respectively. We denote by $H(\omega)=A(\omega)+B(\omega)$ the size of the jump of an excursion $\omega$ across level 0, where
  $A(\omega)=-\omega(T_{\ge 0}-)$ and $B(\omega)=\omega(T_{\ge 0})$ are the undershoot and overshoot at the crossing time 
  $T_{\ge 0}(\omega)=\inf\{t>0\colon\omega(t)\ge 0\}$. Similarly, $\widehat{A}(\omega)=\omega(T_{\le 0}-)$ and $\widehat{B}(\omega)=-\omega(T_{\le 0})$ for
  $T_{\le 0}(\omega)=\inf\{t>0\colon\omega(t)\le 0\}$. We are interested in the relative undershoot $U=A/H$ jointly with $H$ under $n$, equivalently 
  $\widehat{U}=\widehat{A}/\widehat{H}$ jointly with $\widehat{H}=\widehat{A}+\widehat{B}$ under $\widehat{n}$. Under $\underline{\widehat{n}}$, the relevant crossing is 
  the end of the excursion, at $T_0(\omega)=\inf\{t>0\colon\omega(t)=0\}$. 

\begin{proof}[Proof of Proposition \ref{propunif}] 
By \cite[Theorem 3]{PPR15arxiv}, we have
  $$\widehat{n}\left(1_{\{t<T_{\le 0}\}}f\left(\omega\left(T_{\le 0}-\right),-\omega\left(T_{\le 0}\right)\right)\right)
	=\underline{\widehat{n}}\left(1_{\{t<T_0\}}\int f(y,-z)K\left(\omega(t),dy,dz\right)\right),$$
  where $K(x,dy,dz)=(W(x)-W(x-y))\Pi(dz-y)dy$ on $(-\infty,0)\times(0,\infty)$, with $W(x)=(\Gamma(1+\alpha))^{-1}x^\alpha$, $x>0$, the scale function of $X$, and
  $\Pi(dx)=\stablebetawith\stablebetaminusonewith(\Gamma(1-\stablebetaminusonespecial))^{-1}x^{-\alpha-2}dx$ the L\'evy measure of $X$. We also denote by $u(t,x)=p_x(t)$ 
  the bivariate renewal density of the decreasing ladder processes of $X$, where $p_y(t)$ is the density of $\sigma_y$ at $t$ for a $\sigma=(\sigma_y,y\ge 0)$ with 
  Laplace exponent $\Phi(\xi)=\xi^{1/\stablebetawith}$. Specifically, we can read from \cite[Sections 7.3, 8.1 and 8.2]{Kyprianou} that 
  $\int_0^\infty e^{-\eta y}W(y)dy=1/\eta^{\stablebetawithout}$ and $\int_0^\infty\int_0^\infty e^{-\xi t-\eta x}u(t,x)dxdt=1/(\xi^{1/\stablebetawith}+\eta)$, and these 
  Laplace transforms are easily inverted. From \cite[Equation (5.2.5)]{Don06}, we take the entrance law $\underline{\widehat{n}}(t<T_0,\omega(t)\in dx)=c^\prime u(t,x)dx$,
  up to a constant $c^\prime\in(0,\infty)$. Hence,
  \begin{align*}&\underline{\widehat{n}}\left(1_{\{t<T_0\}}\int f(y,-z)K(\omega(t),dy,dz)\right)\\
                &=\frac{c^\prime\alpha(1+\alpha)\Gamma(1+\alpha)}{\Gamma(1-\alpha)}\int_0^\infty p_x(t)
					   	\Bigg(\int_0^x(x^\alpha-(x-y)^\alpha)\int_{-\infty}^0f(y,-z)(y-z)^{-\alpha-2}dzdy\\
			    &\hspace{7cm}+\int_x^\infty x^{\alpha}\int_{-\infty}^0f(y,-z)(y-z)^{-\alpha-2}dzdy\Bigg)dx.
  \end{align*}
  To compute this, and to let $t\downarrow 0$, we apply Fubini's theorem and first consider the $x$-integral over the $(x-y)^{\alpha}$-term:
  \begin{align*}\int_y^\infty p_x(t)(x-y)^\alpha dx&=\int_0^\infty p_{r+y}(t)r^\alpha dr=\int_0^\infty p_1((r+y)^{-\alpha-1}t)(r+y)^{-\alpha-1}r^\alpha dr\\
												   &=t^{-\alpha/(1+\alpha)}\frac{1}{1+\alpha}\int_0^{ty^{-1-\alpha}}p_1(s)s^{-1/(1+\alpha)}\left(t^{1/(1+\alpha}s^{-1/(1+\alpha)}-y\right)^\alpha ds\\
												   &\le\frac{1}{1+\alpha}\int_0^\infty 1_{\{s\le ty^{-1-\alpha}\}}p_1(s)s^{-1}ds,
  \end{align*}
  by the Dominated Convergence Theorem, as the integral without the indicator equals $\Gamma(1/\alpha)/\alpha$ (see e.g. \cite[Equation (0.40)]{csp}). The 
  remaining terms yield, using similar substitutions,
  \begin{align*} &\frac{c^\prime\alpha(1+\alpha)\Gamma(1+\alpha)}{\Gamma(1-\alpha)}\frac{1}{1+\alpha}\int_0^\infty p_1(s)s^{-1}ds\int_0^\infty\int_{-\infty}^0
       f(y,-z)(y-z)^{-\alpha-2}dzdy\\
     &=\frac{c^\prime\Gamma(1+\alpha)\Gamma(1/\alpha)}{\Gamma(1-\alpha)}\int_0^\infty\int_0^{\infty}f(y,z)(y+z)^{-\alpha-2}dzdy.
  \end{align*} 
  We conclude that under the excursion measure $n$ of $X=-\widehat{X}$, undershoot $A$ and overshoot $B$ satisfy
  $$n(A\in dy,B\in dz)=\frac{c^\prime\Gamma(1+\alpha)\Gamma(1/\alpha)}{\Gamma(1-\alpha)}(y+z)^{-\alpha-2}dzdy$$
  and hence $H=A+B$ and $U=A/H$ satisfy
  $$n(H\in dx,U\in dr)=\frac{c^\prime\Gamma(1+\alpha)\Gamma(1/\alpha)}{\Gamma(1-\alpha)}x^{-\alpha-1}dxdr.$$
  In particular, $U=A/(A+B)$ is uniformly distributed under $n(\,\cdot\,|\,H=x)$ for a.e. $x>0$ and indeed for all $x>0$ by the scaling property. 
  The remaining results are classical. The independence claim is a consequence of the Markov property under $n$, see e.g. \cite[Section IV.4]{Ber1996}, as is the 
  conditional distribution of the post-jump process under $n(\,\cdot\,|\,A=y,B=z)$. The claim about the pre-jump process follows by time-reversal \cite{GetoShar81}. Since
  the two first hitting times are downward level passage times, their sum has Laplace transform given by 
  $$n\left(\exp(-\xi T_0)\,|\,A=y,B=z\right)=\exp\left(-(y+z)\Phi(\xi)\right).$$
  On the other hand, by Lemma \ref{tauzero}, the Laplace exponent of
  $\tau^0$ is $\stablebetawith\xi^{1-1/\stablebetawith}$. Therefore 
$$n\left(1-\exp\left(-\xi T_0\right)\right)=\stablebetawith\xi^{1-1/\stablebetawith}=\int_0^\infty\left(1-\exp\left(-x\xi^{1/\stablebetawith}\right)\right)\frac{\stablebetawith\stablebetaminusonewith}{\Gamma(1-\stablebetaminusonespecial)}x^{-\stablebetaminusonespecial-1}dx,$$
  so we require
  $$\frac{c^\prime\Gamma(1+\alpha)\Gamma(1/\alpha)}{\Gamma(1-\alpha)}x^{-\alpha-1}
    =\frac{\stablebetawith\stablebetaminusonewith}{\Gamma(1-\stablebetaminusonespecial)}x^{-\stablebetaminusonespecial-1},
    \quad\mbox{i.e. }c^\prime=\frac{\stablebetawith\stablebetaminusonewith}{\Gamma(1+\alpha)\Gamma(1/\alpha)}.$$          
\end{proof}

\section{Uniform approximation of local times}\label{secunifapprox}

We will use notation $\widehat{m}_h(y,r)=m_h(y,\tau^y(r))-m_h(y,\tau^y(0))$. 
The purpose of this section is to prove Theorem \ref{thm1}. The argument is inspired by Khoshnevisan \cite{Kho-94}, 
who studies rates of convergence. We only need weaker results. Weaker 
results have also been obtained by different methods for different classes of processes e.g. by \cite{BPT1,BPT2,CRe-86,Perkins}, but Khoshnevisan's methods seem most 
adaptable to our setting. Khoshnevisan uses excursion theory to study intrinsic approximations of Brownian local times based on excursion lengths. Excursion lengths 
while intrinsic to the level set are not intrinsic to the population sizes $(Z_t(y-X_{t-})$, $0\le t\le T$, at level $y$, while our approximations in Theorem \ref{thm1}
have this latter property.

\subsection{Proofs of Proposition \ref{propuniformholder} and Proposition \ref{propcmjstable}}

\begin{proof}[Proof of Proposition \ref{propuniformholder}] First note that $\kappa_q(x,\cdot)$ is the distribution of $(x^q\markZ(s/x),0\le s\le x)$ with H\"older 
  constant
  $$\sup_{0\le r<s\le x}\frac{|x^q\markZ(s/x)-x^q\markZ(r/x)|}{|s-r|^\pathholder}=x^{q-\pathholder}D_\pathholder.$$
  For all $t\in[0,T]$ with $\Delta X_t>0$, denote the associated H\"older constant by 
  $$d_t=\sup_{0\le r<s\le\Delta X_t}\frac{|\markZ_t(s)-\markZ_t(r)|}{|s-r|^\pathholder},$$
  and set $d_t=0$ if $\Delta X_t=0$.
  Now clearly for all $p>0$,
  $$\left(\sup\{d_t\colon 0\le t\le T\}\right)^p\le\max\left\{\left(\sup\{d_t^p\colon 0\le t\le T,\Delta X_t>1\}\right)^p,\sum_{0\le t\le T\colon\Delta X_t\le 1}d_t^p\right\},$$
  and by the compensation formula for Poisson point processes,
  $$\bE\left(\sum_{0\le t\le T\colon\Delta X_t\le 1}d_t^p\right)= T\frac{\stablebetawith \stablebetaminusonewith }{\Gamma(1-\stablebetaminusonespecial)} \bE((D_\pathholder)^p)\int_0^1 x^{p(q-\pathholder)-\stablebetaminusonespecial   -2}dx<\infty,$$
  provided that $p>\stablebetawith /(q-\pathholder)$.  This completes the proof, since there are at most finitely many jumps of size exceeding 1 in $[0,T]$, as their rate is finite in the Poisson point process.
\end{proof}

Using the same argument, we also obtain the following result.

\begin{lemma}\label{cor:all_short_excs_holder}
 Let $\theta\in (0,\gamma]$ and let $\widehat d_t$ denote the $\theta$-H\"older constant of $Z_t$
 \begin{align}
  \widehat{d}_t := \sup_{0\le r<s\le\Delta X_t}\frac{|Z_t(s)-Z_t(r)|}{(s-r)^{\theta}}.
 \end{align}
 For every $\varepsilon > 0$ there is some non-random $C_{\varepsilon}>0$ such that for every $z > 0$,
 \begin{align}
  \bE\left( \sup_{0\le t\le T\colon 0<\Delta X_t\le z} \widehat d_t \right) < C_{\varepsilon}z^{q - \theta - \varepsilon}.\label{eq:all_short_excs_holder}
 \end{align}
\end{lemma}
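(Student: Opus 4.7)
The plan is to combine the scaling property of $\kappa_q$ with a dyadic decomposition of jump sizes, exploiting the existence of all moments of $D_\gamma$ through Jensen's inequality in order to beat the heavy tail of the L\'evy measure $\Pi(dx) \propto x^{-\alpha-2}dx$ near zero.

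First, I would use the scaling property of $\kappa_q$: conditional on $\Delta X_t = x > 0$, the path $Z_t$ has the same law as $(x^q \widetilde{Z}(s/x), 0 \le s \le x)$ for an independent $\widetilde{Z} \sim \kappa_q(1,\cdot)$. A change of variables yields
$$\widehat{d}_t = x^{q-\theta}\, D_\theta^{(t)},$$
where $D_\theta^{(t)}$ is the $\theta$-H\"older constant of $\widetilde{Z}$ on $[0,1]$. Since $\theta \le \gamma$ and $[0,1]$ has unit length, $D_\theta^{(t)} \le D_\gamma^{(t)}$, and the $D_\gamma^{(t)}$ are i.i.d.\ copies of $D_\gamma$, which has moments of all orders by hypothesis.

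Next, partition $(0,z]$ dyadically via $I_k = (2^{-k-1}z, 2^{-k}z]$ for $k \ge 0$. The number $N_k$ of jump times $t \in [0,T]$ with $\Delta X_t \in I_k$ is Poisson with mean $\lambda_k \le C_0 T (2^{-k}z)^{-\alpha-1}$, where $C_0$ depends only on $\alpha$. Each such jump contributes $\widehat{d}_t \le (2^{-k}z)^{q-\theta} D_\gamma^{(t)}$, so
$$\sup_{0 < \Delta X_t \le z,\, 0 \le t \le T} \widehat{d}_t \le \sum_{k \ge 0} (2^{-k}z)^{q-\theta} M_k, \qquad M_k := \max_{i \le N_k} D_\gamma^{(k,i)},$$
with $D_\gamma^{(k,i)}$ i.i.d.\ copies of $D_\gamma$ independent of $N_k$. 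For any $p \ge 1$, Jensen's inequality (via $\max \le (\sum (\cdot)^p)^{1/p}$ and concavity of $x^{1/p}$) combined with Wald's identity gives $\bE[M_k] \le (\lambda_k\, \bE[D_\gamma^p])^{1/p}$. Summing,
$$\bE\Bigl[\sup_{0 < \Delta X_t \le z,\, 0 \le t \le T} \widehat{d}_t\Bigr] \le (C_0 T\, \bE[D_\gamma^p])^{1/p} \sum_{k \ge 0} (2^{-k}z)^{q - \theta - (\alpha+1)/p}.$$
Given $\varepsilon > 0$, I would choose $p$ so that $(\alpha+1)/p < \varepsilon$, which is permissible because $D_\gamma$ has all moments. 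Since $\theta \le \gamma < q$, the exponent $q - \theta - (\alpha+1)/p$ is then strictly positive, the geometric series converges, and the bound reduces to $C_\varepsilon z^{q - \theta - \varepsilon}$, as required.

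The main obstacle is that the naive compensation bound $\bE[\sum_t \widehat{d}_t] = T\bE[D_\theta] \int_0^z x^{q-\theta}\Pi(dx)$ diverges whenever $q - \theta - \alpha \le 1$, and one cannot simply dominate the supremum by the sum. The dyadic-plus-Jensen step circumvents this by replacing, shell-by-shell, the expected sum (of order $\lambda_k$) with the expected maximum (of order $\lambda_k^{1/p}$), trading all moments of $D_\gamma$ for an arbitrarily small loss $\varepsilon$ in the exponent.
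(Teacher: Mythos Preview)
Your proof is correct and rests on the same core idea as the paper: bound the supremum by $(\sum (\cdot)^p)^{1/p}$, push the expectation inside via Jensen (concavity of $x^{1/p}$), and then use the compensation formula together with the scaling relation $\widehat d_t=(\Delta X_t)^{q-\theta}D_\theta^{(t)}$. The paper, however, applies this trick in a single shot over \emph{all} jumps with $\Delta X_t\le z$, obtaining directly
\[
\bE\Bigl[\sup_{0<\Delta X_t\le z}\widehat d_t\Bigr]
\le\Bigl(\bE\Bigl[\sum_{0<\Delta X_t\le z}\widehat d_t^{\,p}\Bigr]\Bigr)^{1/p}
=(\bE D_\theta^p)^{1/p}\Bigl(T\!\int_0^z x^{(q-\theta)p}\Pi(dx)\Bigr)^{1/p}
=C_p'\,z^{q-\theta-(1+\alpha)/p},
\]
with the same integrability condition $p>(1+\alpha)/(q-\theta)$ and the same choice $p>(1+\alpha)/\varepsilon$ at the end. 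Your dyadic partition into shells $I_k$ is thus an unnecessary intermediate layer: once you have $\max\le(\sum^p)^{1/p}$ and Jensen, the compensation formula already produces the finite integral $\int_0^z x^{(q-\theta)p-\alpha-2}dx$ without any need to regroup the jumps by scale. Both arguments yield the identical bound and require the identical moment hypothesis on $D_\gamma$ (or $D_\theta$); the paper's version is simply shorter.
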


\begin{proof}
 Fix $\theta\in (0,\gamma]$ and let $p > (1+\alpha)/(q-\theta)$. We apply Jensen's inequality and argue
 as in the proof of Proposition \ref{propuniformholder}, via the scaling invariance of $\kappa_q$, to find that
 \begin{align*}
  &\bE\left( \sup_{0\le t\le T\colon \Delta X_t\le z} \widehat{d}_t \right)\leq \left(\bE\left( \sum_{0\le t\le T\colon \Delta X_t\le z} {\widehat{d}}_t^{\;p} \right)\right)^{1/p}\\
  	&= \left(\bE\left( \sum_{0\le t\le T\colon \Delta X_t\le z} (\Delta X_t)^{(q-\theta)p} \right)\right)^{1/p}\left(\bE(D_\theta^p)\right)^{1/p}\\
  	&= \left(\bE(D_\theta^p)\right)^{1/p}\left(\int_0^{z}x^{(q-\theta)p}\frac{(1+\alpha)\alpha T}{\Gamma(1-\alpha)}x^{-\alpha-2}dx\right)^{1/p}
  	\!= \left(\bE(D_\theta^p)\right)^{1/p}C'_p z^{q-\theta - (1+\alpha)/p},
 \end{align*}
 where $C'_p$ is a finite deterministic term that depends on $p$. Since $D_\theta$ has moments of all orders, the above expression is finite for all 
 $p > (1+\alpha)/(q-\theta)$. To obtain \eqref{eq:all_short_excs_holder}, we take $p > (1+\alpha)/\varepsilon$.
\end{proof}

\begin{proof}[Proof of Proposition \ref{propcmjstable}] Recall notation $n$ for the It\^o excursion measure of $X$ away from 0, $H$ for the size of the jump across 0, 
  and $U$ for the relative undershoot of that jump across 0. We applied the marking kernel $\kappa_q$ to the Poisson point process of all jumps of $X$, and this induces
  a $\kappa_q$-mark $Z_{T_{\ge 0}}$ of the jump $(T_{\ge 0},H)$ under $n$. We write $n_+(d\omega,df)=\kappa_q(H(\omega),df)n(d\omega)$ for the intensity measure of 
  the It\^o excursion process with marked jump across 0. Recall the joint density of $(H,U)$ under $n$ from Proposition \ref{propunif}.

  By \eqref{cmj}, the jumps of $t\mapsto Z_{[0,t]}(0)$ will be of the form $J_t=Z_t(-X_{t-})$, where $-X_{t-}$ is the undershoot of any jump of $X$ at time $t$ across
  level 0. Since $X$ is spectrally positive, there is at most one such jump per excursion. A standard mapping argument shows that those jumps form a Poisson point process 
  in the inverse local time parametrisation of the It\^o excursion process, with intensity measure
  \begin{align*}n_+(\{(\omega,f)\colon f(H(\omega)U(\omega))>h\})
	&=\int_0^\infty\int_0^1\bP(x^qZ(u)>h)\frac{\stablebetawith\stablebetaminusonewith}{\Gamma(1-\stablebetaminusonespecial)}x^{-\stablebetaminusonespecial-1}dxdu\\
    &=\int_0^\infty\bP(Z(U)>z)\frac{\stablebetawith\stablebetaminusonewith}{\Gamma(1-\stablebetaminusonespecial)q}h^{-\stablebetawith/q+1/q}z^{\stablebetawith/q-1/q-1}dz\\
    &=\frac{\stablebetawithout}{\Gamma(1-\stablebetaminusonespecial)}h^{-\stablebetaminusonewith/q}\bE((Z(U))^{\stablebetaminusonewith/q})
    =ch^{-\stablebetaminusonewith/q},
  \end{align*}
  identifying $c$ as given in the statement of Theorem \ref{thm1}. We note that this establishes part (ii) of Proposition \ref{propcmjstable}. For (i), we obtain 
  that the $J_t$ are summable for $q>\stablebetaminusonewithout$, as we recognise the tail of the stable L\'evy measure of index $\stablebetaminusonewith/q$, with 
  Laplace exponent
  $$\Theta(\xi)=\int_{(0,\infty)}(1-e^{-\xi h})n_+(f(H(\omega)U(\omega))\in dh)=c\Gamma(1-\stablebetaminusonewith/q)\xi^{\stablebetaminusonewith/q}.$$
  The generalisation from $y=0$ to general $y\in\bR$ follows by spatial homogeneity of $X$ and by the strong Markov property of $X$ at $T_y=\inf\{t>0\colon X_t=y\}$,
  since $T_y\ge T_{\ge y}$ a.s.. 
\end{proof}

\subsection{Auxiliary results for the proof of Theorem \ref{thm1}}

\noindent To prove Theorem \ref{thm1}, we will need a moderate deviations result for Poisson processes, which we deduce from standard large deviations results as can be found e.g.\ in Dembo and Zeitouni \cite{DeZ}.

\begin{lemma}\label{lm6} Let $(N_t,t\ge 0)$ be a standard Poisson process with $N_t\sim{\rm Poi}(t)$. Then for all $z>\delta>0$, there is $t_0\ge 1$ such that for all $t\ge t_0$
  $$\bP\left(\left|N_{t}-t\right|\ge\sqrt{t\log(t)}\sqrt{2z}\right)\le t^{-z+\delta}.$$ 
\end{lemma}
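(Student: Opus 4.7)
The plan is to prove this as a standard moderate-deviations estimate, using the Chernoff/Cram\'er upper bound for the Poisson distribution together with a Taylor expansion of the rate function. The rate function governing Poisson large deviations is $\Lambda^*(x) = x\log x - x + 1$; near the mean $x=1$ one has $\Lambda^*(1+u) = u^2/2 + O(u^3)$, and this quadratic behaviour is exactly what produces the moderate-deviations rate $t^{-z}$ when the deviation window scales like $\sqrt{t\log t}$.

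First I would recall the exponential Markov bounds obtained by optimizing the Laplace transform $\bE[e^{\lambda N_t}] = e^{t(e^\lambda - 1)}$: for $a>0$,
$$\bP(N_t \ge t+a) \le \exp\bigl(-t\, I_+(a/t)\bigr),\qquad \bP(N_t \le t-a) \le \exp\bigl(-t\, I_-(a/t)\bigr),$$
where $I_+(u) = (1+u)\log(1+u) - u$ and $I_-(u) = (1-u)\log(1-u) + u$ (for $u\in(0,1)$). Both can also be read off from Cram\'er's theorem in Dembo--Zeitouni.

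Next I would substitute $a = a_t := \sqrt{2zt\log t}$, so that $u_t := a_t/t = \sqrt{2z\log(t)/t} \downarrow 0$. Taylor expanding,
$$I_+(u_t) = \tfrac{u_t^2}{2} - \tfrac{u_t^3}{6} + O(u_t^4),\qquad I_-(u_t) = \tfrac{u_t^2}{2} + \tfrac{u_t^3}{6} + O(u_t^4),$$
so in both cases
$$t\, I_\pm(u_t) = \tfrac{t u_t^2}{2}\bigl(1 + O(u_t)\bigr) = z\log t \;-\; O\!\left((\log t)^{3/2}/\sqrt{t}\right).$$
Since $(\log t)^{3/2}/\sqrt{t} \to 0$, for every $\delta>0$ there exists $t_0$ such that the $O$-term is bounded in absolute value by $(\delta/2)\log t$ for all $t\ge t_0$, whence $t\,I_\pm(u_t) \ge (z-\delta/2)\log t$.

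Combining the two one-sided bounds by a union bound gives
$$\bP\bigl(|N_t - t| \ge \sqrt{t\log(t)}\sqrt{2z}\bigr) \;\le\; 2\, t^{-z+\delta/2},$$
and enlarging $t_0$ if necessary absorbs the factor $2$ into the exponent, yielding the claimed bound $t^{-z+\delta}$. The only non-routine point is the bookkeeping in the Taylor expansion for the upper tail, where the cubic correction has the ``bad'' sign ($I_+$ lies slightly below $u^2/2$); this is why the bound is stated with a slack $\delta>0$ rather than with the sharp exponent $z$, and that slack is precisely what absorbs both the cubic error and the union-bound constant.
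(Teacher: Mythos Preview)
Your proof is correct and takes a genuinely different route from the paper's. The paper invokes the abstract moderate deviations principle \cite[Theorem 3.7.1]{DeZ} for the i.i.d.\ centred Poisson increments $Y_i=N_i-N_{i-1}-1$, obtaining the bound first along integer times $n$, and then spends most of the argument interpolating from integer $n$ to real $t$ by sandwiching $N_t-t$ between $N_{\lceil t\rceil}-\lceil t\rceil$ and a slightly perturbed threshold, applying the MDP a second time with a modified sequence $b_n$ to handle the perturbation.

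Your approach sidesteps both the black-box theorem and the integer-to-real passage: the Chernoff bound is available at every real $t$, and the explicit rate functions $I_\pm$ with their Taylor expansions give the estimate directly. This is more elementary and, arguably, cleaner. What the paper's approach buys is that it would generalise immediately to other i.i.d.\ sequences satisfying the hypotheses of the MDP, whereas your computation is specific to the Poisson case (though that is all that is needed here). One small point you might make explicit: the lower-tail bound $I_-(u)$ requires $u_t<1$, which holds once $t$ is large enough that $2z\log t<t$; for smaller $t$ with $a_t\ge t$ the probability $\bP(N_t\le 0)=e^{-t}$ is trivially dominated by $t^{-z+\delta}$, so this causes no difficulty.
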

\begin{proof} We apply \cite[Theorem 3.7.1]{DeZ} to independent centred Poisson variables $Y_i=N_i-N_{i-1}-1$, $i\ge 1$ and $a_n=(2z\log(n))^{-1}$, $n\ge 1$. Since $a_n\rightarrow 0$ and $na_n\rightarrow\infty$ as $n\rightarrow\infty$, we find that for any $\varepsilon>0$, there is $n_0\ge 1$ such that for all $n\ge n_0$
$$\bP\left(\sum_{i=1}^nY_i\ge\sqrt{\frac{n}{a_n}}\right)\le \exp\left(-\frac{1}{2a_n}\right)=n^{-z}.$$
To pass from integer $n\ge 1$ to real $t\ge 2$, we note that
$$\bP\left(N_t-t\ge\sqrt{t\log(t)}\sqrt{2z}\right)\le\bP\left(N_{\lceil t\rceil}-\lceil t\rceil\ge-1+\sqrt{(\lceil t\rceil-1)\log(\lceil t\rceil -1)}\sqrt{2z}\right)$$
is of the same form, with $\sqrt{n/a_n}$ replaced by $\sqrt{n/b_n}=-1+\sqrt{(n-1)\log(n-1)}\sqrt{2z}$, and another application of \cite[Theorem 3.7.1]{DeZ} yields
$$\bP\left(N_t-t\ge\sqrt{t\log(t)}\sqrt{2z}\right)\le\exp\left(-\frac{1}{2b_n}\right)\le {\lceil t\rceil}^{-z+\delta/2}\le t^{-z+\delta/2},$$
for $t\!\ge\! n_0$, for a possibly increased $n_0$. A similar argument deals with $\bP(t-N_t\!\ge\!\sqrt{t\log(t)}\sqrt{2z})$, and together, possibly increasing $n_0$ again, we obtain the stated result.
\end{proof}

\noindent By Proposition \ref{propcmjstable}, we have $\widehat{m}_h(y,r)\sim{\rm Poi}(rch^{-\stablebetaminusonewith/q})$. Moreover, 
$t\mapsto N_t(y,r)=\widehat{m}_{(rct^{-1})^{q/\stablebetaminusonewith}}(y,r)$, $t>0$ is a standard 
Poisson process, by independence properties of the Poisson point process of jumps of $s\mapsto Z_{[0,\tau^y(s)]}(y)$. We will apply the previous lemma to a variation of 
this Poisson process. Following Khoshnevisan \cite[Lemmas 5.1-5.3]{Kho-94}, we consider an independent $\lambda\sim{\rm Exp}(1)$:

\begin{lemma}\label{lm7} Let $T_y=\inf\{t\ge 0\colon X_t=y\}$, $y\in\bR$. Then for all $y>0$, we have
  \begin{align*}&\bP(T_{-y}<\lambda)=\exp(-y),\\
  \mbox{and}\qquad&\bP(T_y<\lambda)=\frac{\stablebetawithout }{\pi}\int_0^\infty\frac{\sin(\pi \stablebetaminusonewith ) s^{\stablebetawithout } }{s^{2\stablebetawith }+2s^{\stablebetawithout } \cos(\pi\stablebetaminusonewith )+1} e^{-ys}ds .
\end{align*}
\end{lemma}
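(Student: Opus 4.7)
Because $X$ has no negative jumps, downward passage and downward hitting of $X$ coincide, so by standard fluctuation theory for spectrally positive L\'evy processes the process $(T_{-y},y\ge 0)$ is a subordinator whose Laplace exponent is the right inverse $\Phi$ of $\psi$. For $\psi(\eta)=\eta^{1+\alpha}$ this gives $\Phi(q)=q^{1/(1+\alpha)}$, and independence of $\lambda\sim\mathrm{Exp}(1)$ from $X$ yields
\begin{equation*}
\bP(T_{-y}<\lambda)=\bE[e^{-T_{-y}}]=e^{-y\Phi(1)}=e^{-y}.
\end{equation*}

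\textbf{Plan for the second identity.} I would start from the classical formula $\bE[e^{-qT_y}]=u^q(y)/u^q(0)$ for the $q$-resolvent density $u^q$ of $X$, which applies because the stable index $1+\alpha\in(1,2)$ makes every point regular for itself. The Fourier transform of $u^q$ is $1/(q+\Psi(\xi))$, where the characteristic exponent $\Psi(\xi)=-(-i\xi)^{1+\alpha}$ is taken under the principal branch of $z\mapsto z^{1+\alpha}$; accordingly $\Psi$ inherits a branch cut along the negative imaginary axis of the $\xi$-plane. For $y>0$ the integrand $e^{-i\xi y}/(q+\Psi(\xi))$ decays as $\mathrm{Im}(\xi)\to-\infty$, so I would split $\int_{\bR}=\int_0^\infty+\int_{-\infty}^0$ and deform each half into the fourth and third quadrants respectively, wrapping each contour onto the adjacent side of the cut. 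A brief argument-counting check shows that $q+\Psi$ has no zero in either open quadrant, and standard estimates control the arcs at infinity and the small arc near the origin. Evaluating $(-i\xi)^{1+\alpha}\to s^{1+\alpha}e^{\mp i\pi(1+\alpha)}$ on the right and left sides of the cut and simplifying with $\cos(\pi(1+\alpha))=-\cos(\pi\alpha)$, $\sin(\pi(1+\alpha))=-\sin(\pi\alpha)$ leads to
\begin{equation*}
u^q(y)=\frac{\sin(\pi\alpha)}{\pi}\int_0^\infty \frac{s^{1+\alpha}e^{-sy}}{q^2+2qs^{1+\alpha}\cos(\pi\alpha)+s^{2(1+\alpha)}}\,ds.
\end{equation*}
Lemma \ref{tauzero} gives $u^q(0)=1/\bigl((1+\alpha)q^{\alpha/(1+\alpha)}\bigr)$, and setting $q=1$ and dividing produces the displayed formula for $\bP(T_y<\lambda)=\bE[e^{-T_y}]$.

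\textbf{Main obstacle.} The only delicate step is the contour manipulation around the negative-imaginary-axis branch cut of $\Psi$: pinning down the limits of $(-i\xi)^{1+\alpha}$ on the two sides under the principal branch, and verifying, through a direct enumeration of admissible arguments, that $q+\Psi$ has no zero in the open third or fourth quadrant. The arc estimates and subsequent algebra are routine, and invoking Lemma \ref{tauzero} sidesteps any separate Mellin-type evaluation of $u^q(0)$.
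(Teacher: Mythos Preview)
Your argument for $\bP(T_{-y}<\lambda)$ is essentially the same as the paper's: both use that for a spectrally positive process downward hitting coincides with downward first passage, so $\bE[e^{-T_{-y}}]=e^{-y\Phi(1)}=e^{-y}$.

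For $\bP(T_y<\lambda)$, your route is correct but genuinely different from the paper's. The paper does not touch the resolvent at all; instead it quotes Simon's factorisation $T_1\stackrel{d}{=}R\,T_{-1}$, where $R$ is independent of $T_{-1}$ with the explicit density
\[
f_R(t)=\frac{\sin(\pi\alpha)\,t^{1/(1+\alpha)}}{\pi\big(t^2+2t\cos(\pi\alpha)+1\big)}\,1_{\{t\ge 0\}}.
\]
Scaling gives $T_y\stackrel{d}{=}R\,T_{-y}$, and since $\bE[e^{-RT_{-y}}\mid R=t]=\bE[e^{-T_{-yt^{1/(1+\alpha)}}}]=e^{-yt^{1/(1+\alpha)}}$, one obtains the stated integral after a change of variable $s=t^{1/(1+\alpha)}$. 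Your approach, by contrast, computes $u^q(y)$ directly via Fourier inversion and a branch-cut contour deformation, then divides by $u^q(0)$ obtained from Lemma~\ref{tauzero}. The contour computation you sketch does go through (the argument range of $(-i\xi)^{1+\alpha}$ in each open quadrant avoids $2\pi\bZ$, so there are no poles, and the difference across the cut produces exactly the integrand), and it is pleasantly self-contained: you do not need Simon's identity as a black box. The trade-off is that the paper's proof is a three-line computation once Simon's result is in hand, whereas yours requires justifying the arc estimates and the absence of zeros carefully. Either way the final expression agrees.
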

\begin{proof} Let $y>0$. Since $X$ is spectrally positive, we have $T_{-y}=\inf\{t\ge 0\colon X_t\le -y)$, and it is well-known that $\bP(T_{-y}<\lambda)=\bE(e^{-T_{-y}})=\exp(-y)$, see e.g. Bertoin \cite[Chapter VII]{Ber1996} for first passage problems of spectrally negative L\'evy processes such as $-X$. On the other hand, Simon \cite{Sim-11} showed that, $T_1\sim RT_{-1}$, where $R$ is independent of $T_{-1}$ with probability density function 
\[f_R(t) = \frac{\sin(\pi \stablebetaminusonewith ) t^{1/\stablebetawith }}{\pi(t^2+2t\cos(\pi\stablebetaminusonewith )+1)}1_{\{t\ge 0\}}.\]
By scaling, $T_y\sim y^{\stablebetawithout  }T_1\sim y^{\stablebetawithout  }RT_{-1}\sim RT_{-y}$. With this, we obtain
\[\begin{split} \bP(T_y<\lambda) = \bP(T_{-y}R<\lambda) & =\frac{1}{\pi}\int_0^\infty\frac{\sin(\pi \stablebetaminusonewith ) t^{1/\stablebetawith  }}{t^2+2t\cos(\pi\stablebetaminusonewith )+1}\bP\left(tT_{-y}<\lambda\right)dt \\
& =\frac{1}{\pi}\int_0^\infty\frac{\sin(\pi \stablebetaminusonewith ) t^{1/\stablebetawith  }}{t^2+2t\cos(\pi\stablebetaminusonewith )+1}\bP\left(T_{-yt^{1/\stablebetawith }}<\lambda\right)dt\\
& = \frac{1}{\pi}\int_0^\infty\frac{\sin(\pi \stablebetaminusonewith ) t^{1/\stablebetawith  }}{t^2+2t\cos(\pi\stablebetaminusonewith )+1} e^{-yt^{1/\stablebetawith }}dt \\
& =\frac{\stablebetawithout }{\pi}\int_0^\infty\frac{\sin(\pi \stablebetaminusonewith ) s^{\stablebetawithout } }{s^{2\stablebetawith }+2s^{\stablebetawithout } \cos(\pi\stablebetaminusonewith )+1} e^{-ys}ds 
.\end{split}\]\vspace{-0.5cm}

\end{proof}

\begin{lemma} For all $y\in\bR$, we have  
  $$\bP(\ell^y(\lambda)>r)=\bP(T_y<\lambda)\exp\left(-\stablebetawith r\right).$$
\end{lemma}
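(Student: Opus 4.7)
\bigskip
\noindent\textbf{Proof plan.} The starting point is to re-express the event $\{\ell^y(\lambda)>r\}$ in terms of the inverse local time: since $\ell^y$ is continuous and nondecreasing and $\tau^y$ is its right-continuous inverse, $\{\ell^y(\lambda)>r\} = \{\tau^y(r)<\lambda\}$. Because $\lambda\sim\mathrm{Exp}(1)$ is independent of $X$, conditioning on $X$ gives the identity
$$\bP\bigl(\tau^y(r)<\lambda\bigr) = \bE\bigl[e^{-\tau^y(r)}\mathbf{1}_{\{\tau^y(r)<\infty\}}\bigr],$$
and analogously $\bP(T_y<\lambda)=\bE[e^{-T_y}\mathbf{1}_{\{T_y<\infty\}}]$. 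So the claim reduces to showing the Laplace-transform factorisation
$$\bE\bigl[e^{-\tau^y(r)}\mathbf{1}_{\{\tau^y(r)<\infty\}}\bigr]=\bE\bigl[e^{-T_y}\mathbf{1}_{\{T_y<\infty\}}\bigr]\cdot\bE\bigl[e^{-\tau^0(r)}\bigr]\quad\text{and}\quad\bE\bigl[e^{-\tau^0(r)}\bigr]=e^{-(1+\alpha)r}.$$

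\smallskip
The second identity is immediate from Lemma \ref{tauzero}: the subordinator $\tau^0$ has Laplace exponent $(1+\alpha)\xi^{1-1/(1+\alpha)}$, which at $\xi=1$ equals $1+\alpha$, giving $\bE[e^{-\tau^0(r)}]=e^{-(1+\alpha)r}$. The factorisation then follows from the strong Markov property of $X$ at $T_y$. On the event $\{T_y<\infty\}$, $X_{T_y}=y$ and the shifted process $X'_s:=X_{T_y+s}-y$ is a copy of $X$ starting from $0$, independent of $\mathcal{F}_{T_y}$. Since no local time at level $y$ can accumulate before $T_y$, we have $\ell^y(T_y)=0$ and, for $s\ge 0$, $\ell^y(T_y+s)$ coincides with the local time at level $0$ of $X'$ evaluated at time $s$. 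Inverting gives $\tau^y(r) = T_y + \widetilde\tau(r)$ on $\{T_y<\infty\}$, where $\widetilde\tau(r)$ is the inverse local time at $0$ of $X'$ at $r$; on $\{T_y=\infty\}$, $\tau^y(r)=\infty$ and the contribution to either side is zero.

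\smallskip
Since $\widetilde\tau(r)\stackrel{d}{=}\tau^0(r)$ and is independent of $\mathcal{F}_{T_y}$, we obtain
$$\bE\bigl[e^{-\tau^y(r)}\mathbf{1}_{\{\tau^y(r)<\infty\}}\bigr]=\bE\bigl[e^{-T_y}\mathbf{1}_{\{T_y<\infty\}}e^{-\widetilde\tau(r)}\bigr]=\bE\bigl[e^{-T_y}\mathbf{1}_{\{T_y<\infty\}}\bigr]\cdot e^{-(1+\alpha)r}.$$
Recombining the two pieces and using $\stablebetawith=(1+\alpha)$ yields $\bP(\ell^y(\lambda)>r)=\bP(T_y<\lambda)e^{-(1+\alpha)r}$, as required.

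\smallskip
The only potentially delicate point is the identification $\tau^y(r)-T_y=\widetilde\tau(r)$ on $\{T_y<\infty\}$; this rests on the normalisation of local time being compatible with the strong Markov property, which in the present stable setting follows from the usual construction of $\ell^y$ via occupation densities (cf.\ Boylan \cite{Boy1964}) or, equivalently, from excursion theory as outlined in Section \ref{secprel}. The rest is bookkeeping with Laplace transforms and the memoryless property of $\lambda$.
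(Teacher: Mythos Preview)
Your proof is correct and follows essentially the same approach as the paper: reduce to level $0$ via the strong Markov property at $T_y$ and spatial homogeneity, then use Lemma \ref{tauzero} to identify the factor $e^{-(1+\alpha)r}$. The only cosmetic difference is that the paper phrases the first reduction via the memoryless property of $\lambda$ and then computes $\bE(\ell^0(\lambda))=1/(1+\alpha)$ by Fubini to identify $\ell^0(\lambda)\sim{\rm Exp}(1+\alpha)$, whereas you go directly through $\bE[e^{-\tau^0(r)}]=e^{-(1+\alpha)r}$ by evaluating the Laplace exponent at $\xi=1$.
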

\begin{proof}  By the strong Markov property and spatial homogeneity of $X$, we have
  $$\bP(\ell^y(\lambda)>a)=\bP(T_y<\lambda)\bP(\ell^0(\lambda)>a).$$
  To calculate $\bP(\ell^0(\lambda)>a)$, note that by the strong Markov property of $X$ at inverse local times, $\ell^0(\lambda)$ is exponentially distributed, and by
  Lemma \ref{tauzero},
  \begin{align*}\bE(\ell^0(\lambda))&=\int_0^\infty\bE\left(\ell^0(t)\right)e^{-t}dt=\int_0^\infty\int_0^\infty\bP\left(\ell^0(t)>s\right)dse^{-t}dt\\
    &=\int_0^\infty\int_0^\infty\bP\left(\tau^0(s)<t\right)e^{-t}dtds=\int_0^\infty\bE\left(e^{-\tau^0(s)}\right)ds=\frac{1}{\stablebetawithout},
  \end{align*}
  i.e. $\ell^0(\lambda)$ has distribution ${\rm Exp}(\stablebetawithout)$, as claimed.
\end{proof}

Recall notation $\widehat{m}_h(y,r)=m_h(y,\tau^y(r))-m_h(y,\tau^y(0))$ for the Poisson counting process of level-$y$ excursions of $X$ with path-mark exceeding $h$ at
the crossing level, parametrised by level-$y$ local time $r\ge 0$. Recall from Proposition \ref{propcmjstable}(ii) that its rate is $ch^{-\alpha/q}$. Note that for each $y\in\bR$, the
inverse local time $\tau^y(0)$ equals the first hitting time $T_y$ of level $y$ a.s., and that $m_h(y,\tau^y(0))=1$ if the single jump of $X$ across level $y$ before 
$T_y$ has a path-mark exceeding $h$, while $m_h(y,\tau^y(0))=0$ otherwise. 

We will study $m_h(y,\lambda)$ by first investigating $\widehat{m}_h(y,\ell^y(\lambda)-)$, i.e. the excursion count stopped just before the excursion 
straddling the independent exponential time $\lambda$. On the event $\{\lambda\le T_y\}$, we have $\ell^y(\lambda)=0$ and $\widehat{m}_h(y,\ell^y(\lambda)-)=0$. 
On the event $\{\lambda>T_y\}$, we consider $\lambda$ as a further mark of the excursion of $X$ straddling $\lambda$, and we can represent the stopped
excursion count as a thinned Poisson process $r\mapsto\widetilde{m}_h(y,r)$ stopped at an independent time $\ell^y(\lambda)\sim{\rm Exp}(\stablebetawithout)$. The
thinned Poisson process has rate $ch^{-\alpha/q}-a_h$ for some $a_h\in[0,\stablebetawithout]$. See e.g. \cite[Section VI.49]{RoW2} for details on the thinning of
excursion processes. 

Finally, note that $m_h(y,\lambda)-m_h(y,\tau^y(0))-\widehat{m}_h(y,\ell^0(\lambda)-)=1$ if the excursion straddling $\lambda$ has crossed before $\lambda$ with a
path-mark exceeding $h$ at the crossing level, while $m_h(y,\lambda)-m_h(y,\tau^y(0))-\widehat{m}_h(y,\ell^0(\lambda)-)=0$ otherwise. In summary, we have for each $y\in\bR$
\begin{equation}\label{mmhat} \widehat{m}_h(y,\ell^y(\lambda)-)\le m_h(y,\lambda)\le\widehat{m}_h(y,\ell^y(\lambda)-)+2\qquad a.s.
\end{equation}  

\begin{lemma}\label{lm11} For all $r_0>0$, $\theta>0$, $\varepsilon>0$, there is $h_0>0$ such that for all $h\le h_0$, $y\in\bR$,
  $$\bP\left(\left|\frac{h^{\stablebetaminusonewith /q}}{c}\widehat{m}_h(y,\ell^y(\lambda)-)-\ell^y(\lambda)\right|\ge \sqrt{\frac{2(\theta\!+\!\varepsilon)(1\!+\!2\varepsilon)}{c}}h^{\stablebetaminusonewith /2q}\sqrt{\log\left(\frac{1}{h}\right)}\sqrt{\ell^y(\lambda)},\ell^y(\lambda)\!\ge\! r_0\!\right)\le h^\theta.$$
\end{lemma}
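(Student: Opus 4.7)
The approach is to use the thinning representation stated just before the lemma: on $\{\ell^y(\lambda)>0\}=\{T_y<\lambda\}$ (which contains the event in question since $r_0>0$), we have $\widehat m_h(y,\ell^y(\lambda)-)\ed N(S\rho_h)$, where $N$ is a standard unit-rate Poisson process, $\rho_h=ch^{-\alpha/q}-a_h$ with $a_h\in[0,1+\alpha]$, and $N$ is independent of $S:=\ell^y(\lambda)\sim\mathrm{Exp}(1+\alpha)$. Writing $t=S\rho_h$, I would decompose
\[
\frac{h^{\alpha/q}}{c}\widehat m_h(y,\ell^y(\lambda)-)-S
=\frac{h^{\alpha/q}}{c}(N(t)-t)-\frac{Sa_hh^{\alpha/q}}{c}.
\]
The first summand is a centred Poisson fluctuation to which Lemma \ref{lm6} applies; the second is an error caused by the gap between the thinned rate $\rho_h$ and the reference rate $ch^{-\alpha/q}$. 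Uniformity in $y\in\bR$ comes for free: conditional on $\{T_y<\lambda\}$, the strong Markov property and spatial homogeneity reduce the analysis to the case $y=0$.

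I would then split on $\{r_0\le S\le L\}$ versus $\{S>L\}$ with $L=K\log(1/h)$ and $K>\theta/(1+\alpha)$. The tail satisfies $\bP(S>L)\le e^{-(1+\alpha)L}=h^{(1+\alpha)K}\le h^\theta$ outright. On the truncated piece I would condition on $S=r\in[r_0,L]$; since $r\le K\log(1/h)$, the thinning error is bounded by $(1+\alpha)Kh^{\alpha/q}\log(1/h)/c$, which is of strictly smaller order than the target threshold $C_h\sqrt{r}:=\sqrt{2(\theta+\varepsilon)(1+2\varepsilon)/c}\,h^{\alpha/2q}\sqrt{\log(1/h)}\sqrt{r}$ by a factor $O(\sqrt{K}\,h^{\alpha/2q})$. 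Consequently, fixing any $\eta\in(0,1)$, for all $h$ sufficiently small and all $r\in[r_0,L]$, the event of the lemma forces $|N(t)-t|\ge(1-\eta)C_h\sqrt{r}\,ch^{-\alpha/q}$.

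Setting $\sqrt{2zt\log t}=(1-\eta)C_h\sqrt{r}\,ch^{-\alpha/q}$ in Lemma \ref{lm6}, and using $\rho_h\sim ch^{-\alpha/q}$ and $\log t\sim(\alpha/q)\log(1/h)$ as $h\downarrow 0$ uniformly in $r\in[r_0,L]$ (since $\log r=O(\log\log(1/h))$), one obtains $z\to(1-\eta)^2(\theta+\varepsilon)(1+2\varepsilon)(q/\alpha)$. Choosing $\eta,\delta>0$ small enough makes $(z-\delta)\alpha/q>\theta$ strictly; this is precisely what the slack $(\theta+\varepsilon)(1+2\varepsilon)>\theta$ provides, which motivates the particular constants in the statement. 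Lemma \ref{lm6} then gives $t^{-z+\delta}\le\mathrm{const}\cdot h^{\theta+\varepsilon''}$ for some $\varepsilon''>0$, uniformly in $r\in[r_0,L]$. Integrating against the density of $S$ on $[r_0,L]$ and combining with the tail piece produces a total bound of the form $\mathrm{const}\cdot h^{\theta+\varepsilon''}$, which is $\le h^\theta$ for $h\le h_0$ small enough.

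The main obstacle is that the thinning rate $a_h$ does not vanish with $h$, so the effective Poisson mean $r\rho_h$ deviates from the idealised $rch^{-\alpha/q}$ by a term linear in $r$; this precludes any uniform-in-$r$ fluctuation estimate by a direct application of Lemma \ref{lm6}. The logarithmic truncation $S\le K\log(1/h)$ is the necessary workaround: at that scale, the thinning-induced additive error $O(h^{\alpha/q}\log(1/h))$ is strictly subdominant to the Poisson fluctuation window $C_h\sqrt{r_0}\asymp h^{\alpha/2q}\sqrt{\log(1/h)}$, while the exponential tail of $S$ painlessly handles the complementary event.
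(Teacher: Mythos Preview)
Your proof is correct and follows essentially the same architecture as the paper's: reduce to $y=0$ by the strong Markov property and spatial homogeneity, condition on $S=\ell^0(\lambda)\sim\mathrm{Exp}(1+\alpha)$ independent of the thinned Poisson process, split $\{S\ge r_0\}$ into a bounded piece and a tail piece, absorb the thinning correction $Sa_hh^{\alpha/q}/c$ into the deviation threshold on the bounded piece, and apply Lemma~\ref{lm6} there while the exponential tail of $S$ handles the rest.

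The one substantive difference is the cut-off level. The paper truncates at the polynomial scale $r\le h^{-\varepsilon\alpha/q(1-2\varepsilon)}$, which makes the tail piece super-polynomially small but requires a more delicate comparison between $\log t$ and $\log(1/h)$ (and a corresponding careful choice of the constants $z,\delta$ and the four-part definition of $h_0$). Your logarithmic cut-off $L=K\log(1/h)$ is cleaner: the thinning error is then $O(h^{\alpha/q}\log(1/h))$, manifestly negligible against the threshold $\asymp h^{\alpha/2q}\sqrt{\log(1/h)}$, and $\log r=O(\log\log(1/h))$ makes the asymptotic $\log t\sim(\alpha/q)\log(1/h)$ immediate. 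The price is that the tail piece is only polynomially small, $h^{(1+\alpha)K}$, but choosing $K>\theta/(1+\alpha)$ is all that is needed. One small point of rigour: since Lemma~\ref{lm6} requires fixed $z,\delta$ before producing $t_0$, you should phrase the application as choosing a fixed $z^*$ strictly between $\theta q/\alpha$ and the limit $(1-\eta)^2(\theta+\varepsilon)(1+2\varepsilon)q/\alpha$, then verifying $(1-\eta)C_h\sqrt{r}\,ch^{-\alpha/q}\ge\sqrt{2z^*t\log t}$ for all $r\in[r_0,L]$ and $h$ small; your convergence argument supplies exactly this.
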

\begin{proof} Recall that $q>\stablebetaminusonewithout $. Let $r_0>0$ and $\theta>0$ and, without loss of generality, $\varepsilon>0$ so small that we have 
  $\varepsilon q(\theta+\varepsilon)(2+7\varepsilon^2+4\varepsilon^4)<q-\stablebetaminusonewith $. Then we apply the strong Markov property of $X$ at $T_y$ and the spatial homogeneity of $X$ to find
that
  \begin{eqnarray*}p_h\!\!\!\!&:=&\!\!\!\bP\left(\left|\frac{h^{\stablebetaminusonewith /q}}{c}\widehat{m}_h(y,\ell^y(\lambda)-)-\ell^y(\lambda)\right|\ge \sqrt{\frac{2(\theta\!+\!\varepsilon)(1\!+\!2\varepsilon)}{c}}h^{\stablebetaminusonewith /2q}\sqrt{\log\left(\frac{1}{h}\right)}\sqrt{\ell^y(\lambda)},\ell^y(\lambda)\!\ge\! r_0\right)\\
    &=&\!\!\!\bP(T_y<\lambda)\bP\Bigg(\left|\frac{h^{\stablebetaminusonewith /q}}{c}\widehat{m}_h(0,\ell^0(\lambda)-)-\ell^0(\lambda)\right|\\
       &&\qquad\qquad\qquad\ge \sqrt{\frac{2(\theta+\varepsilon)(1+2\varepsilon)}{c}}h^{\stablebetaminusonewith /2q}\sqrt{\log\left(\frac{1}{h}\right)}\sqrt{\ell^0(\lambda)},\ell^0(\lambda)\ge r_0\Bigg)\\
    &\le&\!\!\!\int_{r_0}^\infty\bP\left(\left|\frac{h^{\stablebetaminusonewith /q}}{c}\widetilde{m}_h(0,r)-r\right|\ge \sqrt{\frac{2(\theta\!+\!\varepsilon)(1\!+\!2\varepsilon)}{c}}h^{\stablebetaminusonewith /2q}\sqrt{\log\left(\frac{1}{h}\right)}\sqrt{r}\right)\bP(\ell^0(\lambda)\!\in\! dr),
  \end{eqnarray*}
  where $\widetilde{m}_h(0,\cdot)$ is the thinning of $\widehat{m}_h(0,\cdot)$ independent of $\ell^0(\lambda)$ as discussed above the statement of the lemma. As a function of $h>0$, for fixed $r$, we can write 
  $\widetilde{m}_h(0,r)=N(r(ch^{-\stablebetaminusonewith /q}-a_h))$ in terms of a unit rate Poisson 
  process $N$, to which Lemma \ref{lm6} applies. Due to the thinning, we need to re-centre to get a deviation probability from the mean in the integrand above:
  \begin{align*}&\bP\left(\left|N(r(ch^{-\stablebetaminusonewith /q}-a_h))-rch^{-\stablebetaminusonewith /q}\right|
				   \ge\sqrt{rch^{-\stablebetaminusonewith /q}\log\left(\frac{1}{h}\right)}\sqrt{\frac{2(\theta+\varepsilon)(1+2\varepsilon)}{c}}\right)\\
                &\le\bP\left(\left|N\!\left(r\!\left(ch^{-\stablebetaminusonewith /q}\!-\!a_h\right)\!\right)-r\!\left(ch^{-\stablebetaminusonewith /q}\!-\!a_h\right)\right|
%				   \qquad\qquad
\ge\sqrt{rch^{-\stablebetaminusonewith /q}\log\left(\frac{1}{h}\right)}\sqrt{\frac{2(\theta\!+\!\varepsilon)(1\!+\!2\varepsilon)}{c}}-ra_h\!\right).%\Bigg).
  \end{align*}
  The deviation threshold is not quite of the form to which Lemma \ref{lm6} applies, either, but after a few more steps,
  we will apply Lemma \ref{lm6} for $z:=(\theta+\varepsilon)(1-2\varepsilon^2-7\varepsilon^4-4\varepsilon^6)q/\stablebetaminusonewith $ and
  $\delta:=z-\theta q/\stablebetaminusonewith -\varepsilon=\varepsilon(q/\stablebetaminusonewith -1-\varepsilon(\theta+\varepsilon)(2+7\varepsilon^2+4\varepsilon^4)q/\stablebetaminusonewith )$, which satisfy 
  $z>\delta>0$ by the restriction we put onto $\varepsilon$. Let $t_0$ be obtained from Lemma \ref{lm6} and set
  $$h_0\!:=\!\min\left\{\!\frac{1}{e},\!
                    \left(\frac{\sqrt{c}}{\stablebetawithout }\varepsilon^2\sqrt{2(\theta\!+\!\varepsilon)(1\!+\!2\varepsilon)}\right)^{\!2(1\!-\!2\varepsilon)q/\stablebetaminusonewith (1\!-\!3\varepsilon)}\!\!,\!
                    \left(\frac{cr_0}{\stablebetawith  r_0\!+\!t_0}\right)^{\!q/\stablebetaminusonewith }\!,
                    c^{-(1-2\varepsilon)q/\stablebetaminusonewith \varepsilon}\right\}.$$
  Now let $r\le h^{-\varepsilon\stablebetaminusonewith /q(1-2\varepsilon)}$. First, we turn the $a_hr$ term on the right-hand side into $\varepsilon^2$ times the square-root term: for 
  all $h\le h_0$
  $$a_h\le\stablebetawithout ,\quad\sqrt{r}\le h^{-\varepsilon\stablebetaminusonewith /2q(1-2\varepsilon)}\quad\mbox{and}\quad 1\le\log(1/h),$$
  using the first bound on $h_0$ and find that our claim is equivalent to the second bound on $h_0$. Then we check that by the third bound on $h_0$
  $$c\le h^{-\varepsilon\stablebetaminusonewith /q(1-2\varepsilon)}
	\quad\mbox{and hence}\quad\log\left(\frac{1}{h}\right)\ge\frac{q}{\stablebetaminusonewithout }(1-2\varepsilon)\log\left(rch^{-\stablebetaminusonewith /q}\right),$$
  for all $h\le h_0$. Finally, we estimate $rch^{-\stablebetaminusonewith /q}\ge rch^{-\stablebetaminusonewith /q}-ra_h$ and note that $rch^{-\stablebetaminusonewith /q}-ra_h\ge t_0$ for all $h\le h_0$ is
  equivalent to the fourth bound on $h_0$. Hence, Lemma \ref{lm6} applies and yields an upper bound for our integrand of
  \begin{align*}&\bP\Bigg(\left|N\left(r\left(ch^{-\stablebetaminusonewith /q}-a_h\right)\right)-r\left(ch^{-\stablebetaminusonewith /q}-a_h\right)\right|\\
				   &\qquad\qquad\ge\sqrt{r\left(ch^{-\stablebetaminusonewith /q}-a_h\right)\log\left(r\left(ch^{-\stablebetaminusonewith /q}-a_h\right)\right)}
                      \sqrt{\frac{2(\theta+\varepsilon)(1-4\varepsilon^2)(1+\varepsilon^2)^2}{c}}\Bigg)\\
				&\le\left(r\left(ch^{-\stablebetaminusonewith /q}-a_h\right)\right)^{-\theta q/\stablebetaminusonewith -\varepsilon}
                 \le\left(c-\stablebetawith \sqrt{h_0}\right)^{-\theta q/\stablebetaminusonewith -\varepsilon}h^{\theta+\varepsilon\stablebetaminusonewith /q} r^{-\theta q/\stablebetaminusonewith -\varepsilon},
  \end{align*}
  so that $p_h$ is bounded above by
  \begin{align*}&\!\left(c\!-\!\stablebetawith \sqrt{h_0}\right)^{-\theta q/\stablebetaminusonewith -\varepsilon}h^{\theta+\varepsilon\stablebetaminusonewith /q}
							    \int_{r_0}^{h^{-\varepsilon\stablebetaminusonewith /q(1-2\varepsilon)}}\!\!r^{-\theta q/\stablebetaminusonewith -\varepsilon}\bP(\ell^0(\lambda)\in dr)
						   +\bP\left(\ell^0(\lambda)>h^{-\varepsilon\stablebetaminusonewith /q(1-2\varepsilon)}\right)\\
    &\ \ \le\left(r_0c-r_0\stablebetawith \sqrt{h_0}\right)^{-\theta q/\stablebetaminusonewith -\varepsilon}h^{\theta+\varepsilon\stablebetaminusonewith /q}+\exp\left(-\stablebetawith  h^{-\varepsilon\stablebetaminusonewith /q(1-2\varepsilon)}\right)\ \le\  h^\theta
  \end{align*}
  for all $h\le h_0$, possibly by decreasing $h_0$ further to accommodate the last inequality. 
\end{proof}

Now fix $r_0>0$, $\varepsilon>0$ and $\theta>0$. Following Khoshnevisan \cite{Kho-94}, we define for any $\mu>0$, $R>0$, $h>0$ and $y\in\bR$ the lattice 
$K(\mu,R)=(-R,R)\cap\mu\bZ$ and the events $G(y,h)$ given by
$$\left\{\left|\frac{h^{\stablebetaminusonewith /q}}{c}\widehat{m}_h(y,\ell^y(\lambda)-)-\ell^y(\lambda)\right|\ge \sqrt{\frac{2(\theta+\varepsilon)(1+2\varepsilon)}{c}}h^{\stablebetaminusonewith /2q}\sqrt{\log(1/h)}\sqrt{\ell^y(\lambda)},\ell^y(\lambda)\ge r_0\right\}.$$

\begin{corollary}\label{cor12} For any $w>0$ and $r>0$, let $\rho(\discretisationn)=\discretisationn^{-w}$ and $\mu(\discretisationn)=(\rho(\discretisationn))^r=\discretisationn^{-wr}$. Then
  $$\limsup_{\discretisationn\rightarrow\infty}\sup_{y\in K(\mu(\discretisationn),R)}\frac{\left|\frac{(\rho(\discretisationn))^{\stablebetaminusonewith /q}}{c}m_{\rho(\discretisationn)}(y,\lambda)-\ell^y(\lambda)\right|}{(\rho(\discretisationn))^{\stablebetaminusonewith /2q}\sqrt{\log(1/\rho(\discretisationn))}}1_{\{\ell^y(\lambda)\ge r_0\}}\le\sqrt{\frac{2(r+1/w)}{c}}\sqrt{\ell^*(\lambda)}\quad\mbox{a.s.},$$
  where $\ell^*(\lambda)=\sup_{y\in\bR}\ell^y(\lambda)$.
\end{corollary}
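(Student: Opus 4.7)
The plan is to combine Lemma \ref{lm11} with a Borel--Cantelli argument on the discrete lattice $K(\mu(\discretisationn),R)$, and then transfer the resulting control on $\widehat{m}_{\rho(\discretisationn)}(y,\ell^y(\lambda)-)$ to $m_{\rho(\discretisationn)}(y,\lambda)$ via \eqref{mmhat}. The lattice has cardinality $|K(\mu(\discretisationn),R)|\le 2R\discretisationn^{wr}+1$, which is polynomial in $\discretisationn$, so Lemma \ref{lm11} offers more than enough decay provided the threshold parameter $\theta$ is taken strictly greater than $r+1/w$.

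Fix arbitrary $\delta,\varepsilon>0$ and set $\theta=r+1/w+\delta$. Lemma \ref{lm11} supplies $h_0>0$ such that $\bP(G(y,h))\le h^{\theta}$ for all $h\le h_0$ and every $y\in\bR$. A union bound then gives
\begin{equation*}
\sum_{\discretisationn\ge 1}\bP\Bigg(\bigcup_{y\in K(\mu(\discretisationn),R)}G(y,\rho(\discretisationn))\Bigg)\le \sum_{\discretisationn\ge 1}(2R\discretisationn^{wr}+1)\discretisationn^{-w\theta}<\infty,
\end{equation*}
since $w\theta-wr=1+w\delta>1$. By the first Borel--Cantelli lemma, almost surely, for all but finitely many $\discretisationn$, the event $G(y,\rho(\discretisationn))$ fails for every $y\in K(\mu(\discretisationn),R)$ simultaneously.

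On the intersection of $G(y,\rho(\discretisationn))^c$ with $\{\ell^y(\lambda)\ge r_0\}$, the definition of $G$ bounds the error for $\widehat{m}_{\rho(\discretisationn)}$ by $\sqrt{2(\theta+\varepsilon)(1+2\varepsilon)/c}\,\rho(\discretisationn)^{\alpha/2q}\sqrt{\log(1/\rho(\discretisationn))}\sqrt{\ell^y(\lambda)}$. Inequality \eqref{mmhat} allows us to replace $\widehat{m}_{\rho(\discretisationn)}(y,\ell^y(\lambda)-)$ by $m_{\rho(\discretisationn)}(y,\lambda)$ at the cost of an additive term bounded by $2\rho(\discretisationn)^{\alpha/q}/c$, which is of smaller order than $\rho(\discretisationn)^{\alpha/2q}\sqrt{\log(1/\rho(\discretisationn))}$ as $\discretisationn\to\infty$ and hence does not affect the limsup. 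Combining these estimates and using $\sqrt{\ell^y(\lambda)}\le\sqrt{\ell^*(\lambda)}$, we obtain almost surely
\begin{equation*}
\limsup_{\discretisationn\to\infty}\sup_{y\in K(\mu(\discretisationn),R)}\frac{\left|\frac{\rho(\discretisationn)^{\alpha/q}}{c}m_{\rho(\discretisationn)}(y,\lambda)-\ell^y(\lambda)\right|}{\rho(\discretisationn)^{\alpha/2q}\sqrt{\log(1/\rho(\discretisationn))}}\cf_{\{\ell^y(\lambda)\ge r_0\}}\le \sqrt{\frac{2(\theta+\varepsilon)(1+2\varepsilon)}{c}}\sqrt{\ell^*(\lambda)}.
\end{equation*}

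To finish, I would take countable sequences $\delta_j,\varepsilon_j\downarrow 0$, intersect the resulting almost sure events, and let $j\to\infty$: since $\theta_{\delta_j}+\varepsilon_j\downarrow r+1/w$ and $1+2\varepsilon_j\downarrow 1$, the right-hand side tends to the sharp constant $\sqrt{2(r+1/w)/c}\sqrt{\ell^*(\lambda)}$. The only substantive obstacle is the trade-off encoded in the choice $\theta>r+1/w$: the Borel--Cantelli series demands a strict inequality, which initially gives only an inflated constant, and this inflation is recovered as sharp exactly because the threshold in Lemma \ref{lm11} depends continuously on $\theta$ and $\varepsilon$ and can be driven to the target value.
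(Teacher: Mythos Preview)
Your proposal is correct and follows essentially the same approach as the paper's proof: apply Lemma \ref{lm11} on the lattice with $\theta>r+1/w$, use a union bound and Borel--Cantelli, transfer from $\widehat{m}$ to $m$ via \eqref{mmhat} since the correction term $2\rho(\discretisationn)^{\alpha/q}/c$ is negligible after normalisation, and finally let $\theta\downarrow r+1/w$ and $\varepsilon\downarrow 0$ along countable sequences. The only cosmetic difference is that the paper performs the reduction from $m$ to $\widehat{m}$ first, before invoking Borel--Cantelli, whereas you do it afterwards; the content is identical.
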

\begin{proof} First note by (\ref{mmhat}) and the triangular inequality, that it is equivalent to prove the statement with $m_{\rho(k)}(y,\lambda)$ replaced by
  $\widehat{m}_{\rho(k)}(y,\ell^y(\lambda)-)$, since $2(\rho(k))^{\alpha/q}/c(\rho(k))^{\alpha/2q}\sqrt{\log(1/\rho(k))}\rightarrow 0$.
  Then this proof becomes a simple application of the Borel-Cantelli lemma: for all $\theta>r+1/w$
  $$\sum_{\discretisationn\ge 1}\bP\left(\bigcup_{y\in K(\mu(\discretisationn),R)}G(y,\rho(\discretisationn))\right)\le 2R\sum_{\discretisationn\ge 1}\frac{(\rho(\discretisationn))^\theta}{\mu(\discretisationn)}<\infty$$
  implies that a.s.
  $$\limsup_{\discretisationn\rightarrow\infty}\sup_{y\in K(\mu(\discretisationn),R)}\frac{\left|\frac{(\rho(\discretisationn))^{\stablebetaminusonewith /q}}{c}m_{\rho(\discretisationn)}(y,\lambda)-\ell^y(\lambda)\right|}{(\rho(\discretisationn))^{\stablebetaminusonewith /2q}\sqrt{\log(1/\rho(\discretisationn))}}1_{\{\ell^y(\lambda)\ge r_0\}}\le\sqrt{\frac{2(\theta+\varepsilon)(1+2\varepsilon)}{c}}\sqrt{\ell^*(\lambda)},$$
  and considering countable sequences $\varepsilon_n\downarrow 0$ and $\theta_n\downarrow r+1/w$, we obtain the bound given.
\end{proof}

We seek to strengthen the corollary by replacing the supremum over $K(\mu(\discretisationn),R)$ by a supremum over $[-R,R]$:

\begin{proposition}\label{prop11} For all $R>0$ and $r_0>0$, 
we have 
  $$\limsup_{h\downarrow 0}\sup_{y\in[-R,R]}\frac{\left|\frac{h^{\stablebetaminusonewith /q}}{c}m_{h}(y,\lambda)-\ell^y(\lambda)\right|}{h^{\stablebetaminusonewith /2q}\sqrt{\log(1/h)}}1_{\{\ell^y(\lambda)\ge 2r_0\}}\le\sqrt{\frac{2q+\stablebetaminusonewith (\pathholder+1)}{q\pathholder c}}\sqrt{\ell^*(\lambda)}\qquad\mbox{a.s.}.$$
\end{proposition}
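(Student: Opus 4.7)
The plan is to upgrade the lattice control of Corollary \ref{cor12} to a supremum over all $y \in [-R, R]$ and continuous small $h > 0$ via two interpolations: one in the spatial variable using the uniform H\"older regularity of the marks (Proposition \ref{propuniformholder}), and one in the threshold $h$ using monotonicity of $h' \mapsto m_{h'}$. A careful optimization of the free parameters $w$ and $r$ of Corollary \ref{cor12} then produces the stated constant.

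The first step is a \emph{H\"older sandwich}. Let $D := D_{[0,\lambda]} < \infty$ a.s.\ by Proposition \ref{propuniformholder}. For adjacent lattice points $y_k, y_{k+1}$ with spacing $\mu$, any $y \in [y_k, y_{k+1}]$, and any $h > D\mu^\gamma$, I claim
\[
m_{h + D\mu^\gamma}(y_k, \lambda) \leq m_h(y, \lambda) \leq m_{h - D\mu^\gamma}(y_k, \lambda).
\]
Indeed, if $Z_t(y - X_{t-}) \geq h$, then since $Z_t$ vanishes at the endpoints of $(0, \Delta X_t)$ and has H\"older constant at most $D$, the argument $y - X_{t-}$ must lie at distance greater than $(h/D)^{1/\gamma} > \mu$ from both endpoints; hence $y_k - X_{t-} \in (0, \Delta X_t)$ as well, and the H\"older estimate yields $Z_t(y_k - X_{t-}) \geq h - D\mu^\gamma$. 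The lower bound is symmetric.

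Next, set $w := 2q/\alpha$ and take $r$ slightly above $(2q + \alpha)/(2q\gamma)$, defining $\rho(k) := k^{-w}$ and $\mu(k) := \rho(k)^r$ as in Corollary \ref{cor12}. For $h \in [\rho(k+1), \rho(k)]$ I decompose
\begin{align*}
\frac{h^{\alpha/q}}{c} m_h(y, \lambda) - \ell^y(\lambda)
&= \left[\tfrac{h^{\alpha/q}}{c} m_h(y, \lambda) - \tfrac{h^{\alpha/q}}{c} m_h(y_k, \lambda)\right]\\
&\quad + \left[\tfrac{h^{\alpha/q}}{c} m_h(y_k, \lambda) - \ell^{y_k}(\lambda)\right] + \left[\ell^{y_k}(\lambda) - \ell^y(\lambda)\right],
\end{align*}
where $y_k$ is the nearest point of $K(\mu(k), R)$ to $y$. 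The first bracket is controlled by the sandwich and Proposition \ref{propcmjstable}: conditional on $\ell^{y_k}(\lambda)$, the count of excursions at level $y_k$ with mark in $[h - D\mu(k)^\gamma, h)$ (or $[h, h + D\mu(k)^\gamma)$) is Poisson with mean of order $c \ell^{y_k}(\lambda) (\alpha/q) D \mu(k)^\gamma h^{-\alpha/q - 1}$, so after multiplication by $h^{\alpha/q}/c$ its contribution has mean $O(h^{r\gamma - 1} \ell^*(\lambda))$ and standard deviation $O(h^{(r\gamma + \alpha/q - 1)/2})$; the choice $r\gamma - 1 \geq \alpha/(2q)$ makes both $o(h^{\alpha/2q} \sqrt{\log(1/h)})$. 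The second bracket is handled by applying Corollary \ref{cor12} at $y_k \in K(\mu(k), R)$ and using monotonicity of $m_{h'}$ in $h'$ to interpolate from $\rho(k)$ to $h$; the interpolation factor satisfies $(h/\rho(k))^{\alpha/q} = 1 + O(h^{1/w}) = 1 + O(h^{\alpha/2q})$, contributing an additional error of size $O(h^{\alpha/2q} \ell^*(\lambda))$, again $o$ of the target after division by $\sqrt{\log(1/h)}$. The third bracket satisfies $|\ell^{y_k}(\lambda) - \ell^y(\lambda)| \leq C \mu(k)^\nu$ for some $\nu < \alpha/2$ chosen with $\nu > \alpha\gamma/(2q + \alpha)$ (Boylan's H\"older continuity of local times suffices), so $r\nu > \alpha/(2q)$ and this term is negligible. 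The cutoff $\ell^y(\lambda) \geq 2r_0$ ensures $\ell^{y_k}(\lambda) \geq r_0$ for large $k$, enabling the use of Corollary \ref{cor12}.

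With the above choices only the main Corollary \ref{cor12} error survives in the limit, with constant $\sqrt{2(r + 1/w)/c}\sqrt{\ell^*(\lambda)}$. Taking $r \downarrow (2q + \alpha)/(2q\gamma)$ and using $1/w = \alpha/(2q)$ gives
\[
r + \tfrac{1}{w} \longrightarrow \tfrac{2q + \alpha}{2q\gamma} + \tfrac{\alpha}{2q} = \tfrac{2q + \alpha(\gamma + 1)}{2q\gamma},
\]
which yields exactly the claimed bound $\sqrt{(2q + \alpha(\gamma + 1))/(q\gamma c)}\sqrt{\ell^*(\lambda)}$. The main obstacle is the joint parameter optimization: $w = 2q/\alpha$ is the largest value permitted by the $h$-interpolation (anything larger lets $(h/\rho(k))^{\alpha/q} - 1$ dominate the target), while $r = (2q + \alpha)/(2q\gamma)$ is the smallest compatible with the $y$-interpolation (anything smaller lets $D\mu^\gamma/h$ dominate), and it is precisely this combination that produces the above arithmetic identity.
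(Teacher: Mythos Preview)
Your overall structure and the parameter optimization are right, and they match the paper's: one spatial interpolation via the H\"older bound on the marks, one threshold interpolation via monotonicity of $h'\mapsto m_{h'}$, with $w=2q/\alpha$ and $r\downarrow(2q+\alpha)/(2q\gamma)$ producing the stated constant. But there is a genuine gap in your treatment of the first bracket.

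Your sandwich $m_{h+D\mu^\gamma}(y_k,\lambda)\le m_h(y,\lambda)\le m_{h-D\mu^\gamma}(y_k,\lambda)$ is correct, but the thresholds $h\pm D\mu(k)^\gamma$ are random (since $D=D_{[0,\lambda]}$ depends on all the marks) and are not the lattice thresholds $\rho(j)$ to which Corollary~\ref{cor12} applies. You then try to control $m_{h-D\mu^\gamma}(y_k,\lambda)-m_{h+D\mu^\gamma}(y_k,\lambda)$ by a ``Poisson mean plus standard deviation'' heuristic via Proposition~\ref{propcmjstable}. This gives moment information at a single $y_k$, not an almost-sure bound uniformly over all $y_k\in K(\mu(k),R)$ and all $k$; you would still need a tail estimate and a Borel--Cantelli argument here, and the randomness of $D$ makes the conditional Poisson picture delicate. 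Similarly, your second bracket requires controlling $\tfrac{h^{\alpha/q}}{c}m_h(y_k,\lambda)-\ell^{y_k}(\lambda)$ for $h\in[\rho(k+1),\rho(k)]$ and $y_k\in K(\mu(k),R)$, but Corollary~\ref{cor12} gives this only at the paired values $(\rho(j),K(\mu(j),R))$; the lattices $K(\mu(k),R)$ are not nested, so interpolating in $h$ alone does not suffice.

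The paper sidesteps both issues with a slightly different sandwich: it compares $m_{\rho(k)}(y,\lambda)$ directly to $m_{\rho(k\mp 1)}(g_{k\mp 1}(y),\lambda)$, shifting \emph{both} the threshold and the spatial lattice to the neighboring index $k\mp1$. The condition $D_{[0,\lambda]}\mu(k\mp1)^\gamma\le|\rho(k\mp1)-\rho(k)|$ (equivalent to $r>(1+1/w)/\gamma$, which is your constraint) makes this a pathwise inequality, and Corollary~\ref{cor12} then applies verbatim at index $k\mp1$. Only an $\ell^*(\lambda)$ correction of size $|(\rho(k)/\rho(k\mp1))^{\alpha/q}-1|$ and the local-time increment $|\ell^y(\lambda)-\ell^{g_{k\mp1}(y)}(\lambda)|$ remain, both negligible after division by $\rho(k)^{\alpha/2q}\sqrt{\log(1/\rho(k))}$ when $w\le 2q/\alpha$. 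Your first bracket disappears entirely in this formulation; that is the missing idea.
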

\begin{proof} Let $D_{[0,\lambda]}$ be as in Proposition \ref{propuniformholder}, for $T=\lambda$. We will work on the event of probability 1, where
  $D_{[0,\lambda]}<\infty$, $y\mapsto\ell^y(\lambda)$ satisfies Boylan's \cite{Boy1964} modulus of continuity
  $|\ell^y(\lambda)-\ell^x(\lambda)|\le K_\lambda|\log|y-x||(|y-x|)^{\stablebetaminusonewith /2}$ and where the bound of the preceding corollary holds. There is $\discretisationn_0\ge 2$ for which 
  the following two estimates hold. First, we can guarantee for all $\discretisationn\ge \discretisationn_0$ that  
  $$D_{[0,\lambda]}(\mu(\discretisationn-1))^\pathholder\le\rho(\discretisationn-1)-\rho(\discretisationn)\qquad\mbox{and}\qquad D_{[0,\lambda]}(\mu(\discretisationn+1))^\pathholder\le\rho(\discretisationn)-\rho(\discretisationn+1),$$
  provided that $r>(1+1/w)/\pathholder$. Second, we can guarantee that $|\ell^y(\lambda)-\ell^{z}(\lambda)|\le r_0$ for all $y,z\in[-R,R]$ with $|y-z|\le\mu(\discretisationn_0)$. For
  $y\in[-R,R]$ and $\discretisationn\ge 1$, let $g_\discretisationn(y)\in K(\mu(\discretisationn),R)$ with $|g_\discretisationn(y)-y|\le\mu(\discretisationn)$. Then for all $\discretisationn\ge \discretisationn_0$, we have on the chosen event of probability 1
  \begin{align*}\ell^y(\lambda)-\frac{(\rho(\discretisationn))^{\stablebetaminusonewith /q}}{c}m_{\rho(\discretisationn)}(y,\lambda) 
    &\le\left(\frac{\rho(\discretisationn)}{\rho(\discretisationn\!-\!1)}\right)^{\stablebetaminusonewith /q}\left(\!\ell^{g_{\discretisationn-1}(y)}(\lambda)-\frac{(\rho(\discretisationn\!-\!1))^{\stablebetaminusonewith /q}}{c}m_{\rho(\discretisationn-1)}(g_{\discretisationn-1}(y),\lambda)\!\right)\\
      &\qquad+\left|\ell^y(\lambda)-\ell^{g_{\discretisationn-1}(y)}(\lambda)\right|+\left(1-\left(\frac{\rho(\discretisationn)}{\rho(\discretisationn-1)}\right)^{\stablebetaminusonewith /q}\right)\ell^*(\lambda).
  \end{align*}
  We add denominators $(\rho(\discretisationn))^{\stablebetaminusonewith /2q}\sqrt{\log(1/\rho(\discretisationn))}$ and take suprema $y\in[-R,R]$ on both sides. Adding 
  indicators $1_{\{\ell^y(\lambda)\ge 2r_0\}}$ on the LHS, we may add indicators $1\{\ell^{g_{\discretisationn-1}(y)}(\lambda)\ge r_0\}$ on the first term of the RHS. 
  With these denominators, suprema and indicators, the first term of the RHS has $\limsup$ as stated in the preceding corollary, for all 
  $\discretisationn\ge \discretisationn_0$, since $\rho(\discretisationn)/\rho(\discretisationn-1)\rightarrow 1$, The other two terms vanish in the limit, 
  since $r>1/\pathholder>1/q$, provided that $w\le 2q/\stablebetaminusonewith $, which implies $(1-(\rho(\discretisationn)/\rho(\discretisationn-1))^{\stablebetaminusonewith /q})/(\rho(\discretisationn)^{\stablebetaminusonewith /2q}\log(1/\rho(\discretisationn)))\rightarrow 0$. 
  Similarly the suprema $y\in[-R,R]$ of the following bounds have bounded limsup when adding the analogous indicators and denominators:
  \begin{align*}\frac{(\rho(\discretisationn))^{\stablebetaminusonewith /q}}{c}m_{\rho(\discretisationn)}(y,\lambda)-\ell^y(\lambda)
    &\le\left(\frac{\rho(\discretisationn)}{\rho(\discretisationn\!+\!1)}\right)^{\stablebetaminusonewith /q}\left(\frac{(\rho(\discretisationn\!+\!1))^{\stablebetaminusonewith /q}}{c}m_{\rho(\discretisationn+1)}(g_{\discretisationn+1}(y),\lambda)-\ell^{g_{\discretisationn+1}(y)}(\lambda)\!\right)\\ 
    &\qquad+\left|\ell^y(\lambda)-\ell^{g_{\discretisationn+1}(y)}(\lambda)\right|+\left(\left(\frac{\rho(\discretisationn)}{\rho(\discretisationn+1)}\right)^{\stablebetaminusonewith /q}-1\right)\ell^*(\lambda).
  \end{align*}
  To strengthen the limit along $h=\rho(\discretisationn)\rightarrow 0$ to $h\downarrow 0$, write $\overline{h}=\min\left(\{\rho(\discretisationn),\discretisationn\ge 1\}\cap[h,\infty)\right)$ and $\underline{h}=\max\left(\{\rho(\discretisationn),\discretisationn\ge 1\}\cap(0,h]\right)$. Then we can bound above
  $$\frac{h^{\stablebetaminusonewith /q}}{c}m_h(y,\lambda)-\ell^y(\lambda)\le\left(\frac{\overline{h}}{\underline{h}}\right)^{\stablebetaminusonewith /q}\left(\frac{\underline{h}^{\stablebetaminusonewith /q}}{c}m_{\underline{h}}(y,\lambda)-\ell^y(\lambda)\right)+\left(\left(\frac{\overline{h}}{\underline{h}}\right)^{\stablebetaminusonewith /q}-1\right)\ell^*(\lambda),$$
  similarly below. A straightforward argument completes the proof since $w\le 2q/\stablebetaminusonewith $ so that $\overline{h}/\underline{h}\rightarrow 1$ can be strengthened to 
  $(\overline{h}/\underline{h})^{\stablebetaminusonewith /q}-1\!=\!o(h^{\stablebetaminusonewith /2q}\log(1/h))$ as $h\downarrow 0$, and with $w=2q/\alpha$ and $r_n\downarrow(1+\stablebetaminusonewith /2q)/\pathholder$, we
  establish the bound $(2q+\stablebetaminusonewith (\pathholder+1))/q\pathholder c$ as claimed.
\end{proof}

\subsection{Proofs of Theorem \ref{thm1} and Corollary \ref{corhat}}

\begin{proof}[Proof of Theorem \ref{thm1}] For the proof of the theorem, first note that Proposition \ref{prop11} gives a result at a single ${\rm Exp}(1)$ distributed
  random time. Now fix $R>0$ and consider for all $t>0$ and $r_0>0$ the events
  $$A(t,r_0)=\left\{\limsup_{h\downarrow 0}\sup_{y\in[-R,R]}\frac{\left|\frac{h^{\stablebetaminusonewith /q}}{c}m_{h}(y,t)-\ell^y(t)\right|}{h^{\stablebetaminusonewith/2q}\sqrt{\log(1/h)}}1_{\{\ell^y(t)\ge 2r_0\}}\le\sqrt{\frac{2q+\stablebetaminusonewith (\pathholder+1)}{q\pathholder c}}\sqrt{\ell^*(t)}\right\}.$$
  Then we have shown that $1=\bP(A(\lambda,r_0))=\int_0^\infty\bP(A(t,r_0))e^{-t}dt$, hence $\bP(A(t,r_0))=1$ for
  all $t\ge 0$ except possibly on a Lebesgue null set, but by scaling, this null set must be empty. 

  Now fix $T>t>0$ and let $\varepsilon>0$ with $t,\varepsilon\in\bQ$. With $r_0=\varepsilon/2$, we can a.s.\ find $h_0>0$ such that for all $h<h_0$ 
  $$\sup_{y\in[-R,R]}\left(\ell^y(t)-\frac{h^{\stablebetaminusonewith /q}}{c}m_h(y,t)\right)\le\varepsilon+\left(\varepsilon+\sqrt{\frac{2q+\stablebetaminusonewith (\pathholder+1)}{q\pathholder c}}\sqrt{\ell^*(t)}\right)h^{\stablebetaminusonewith /2q}\sqrt{\log\left(\frac{1}{h}\right)}.$$
  Taking $\limsup_{h\downarrow 0}$ and $\varepsilon\downarrow 0$ along a sequence $\varepsilon_n\downarrow 0$, we see that almost surely
  \begin{equation}\label{limsupvanish}\limsup_{h\downarrow 0}\sup_{y\in[-R,R]}\left(\ell^y(t)-\frac{h^{\stablebetaminusonewith /q}}{c}m_h(y,t)\right)\le 0.
  \end{equation}
  But on the event $\{-R\le\inf_{0\le t\le T}X_t\le\sup_{0\le t\le T}X_t\le R\}$, this supremum is actually $\sup_{y\in\mathbb{R}}$. Also, (\ref{limsupvanish}) holds
  for all $t\in\bQ\cap(0,T]$ a.s.. By continuity of $t\mapsto\ell^y(t)$ (uniformly in $y$), and the monotonicity of $t\mapsto m_h(y,t)$, this holds for all 
  $t\in[0,\infty)$ a.s.. Furthermore, for all $\varepsilon>0$, there is $\delta>0$ so that for all $y\in\bR$ and all $s<t$ in $[0,T]$, $|s-t|<\delta$ implies
  $\ell^y(t)-\ell^y(s)<\varepsilon$. So, 
  $$\limsup_{h\downarrow 0}\sup_{y\in[-R,R]}\sup_{s\in\delta\bZ\cap[0,T]}\left(\ell^y(s)-\frac{h^{\stablebetaminusonewith /q}}{c}m_h(y,s)\right)\le 0$$
  already entails
  $$\limsup_{h\downarrow 0}\sup_{y\in[-R,R]}\sup_{t\in[0,T]}\left(\ell^y(t)-\frac{h^{\stablebetaminusonewith /q}}{c}m_h(y,t)\right)\le\varepsilon.$$
  Taking a rational sequence $\varepsilon_n\downarrow 0$, this limsup must vanish a.s.. This argument worked for fixed $R>0$ on the event 
  $\{-R\le\inf_{0\le t\le T}X_t\le\sup_{0\le t\le T}X_t\le R\}$. Now taking the union of these events over a rational sequence $R_n\uparrow\infty$, we conclude that we have the ``upper bound''
  $$\limsup_{h\downarrow 0}\sup_{y\in\bR}\sup_{t\in[0,T]}\left(\ell^y(t)-\frac{h^{\stablebetaminusonewith /q}}{c}m_h(y,t)\right)\le 0\qquad\mbox{a.s..}$$
  
  For the ``lower bound'', we have similarly for all $R>0$, $t\in\bQ\cap[0,\infty)$, $r_0>0$
  $$\limsup_{h\downarrow 0}\sup_{y\in[-R,R]\colon\ell^y(t)\ge 2r_0}\left(\frac{h^{\stablebetaminusonewith /q}}{c}m_h(y,t)-\ell^y(t)\right)\le 0\qquad\mbox{a.s..}$$
  Now let $\varepsilon>0$ and choose $r_0=\varepsilon/4$. Let $S>0$. On the event
  $$\left\{-R\le\inf_{0\le t\le T}X_t\le\sup_{0\le t\le T}X_t\le R\right\}\cap\left\{\ell^y(T+S)\ge\varepsilon/2\mbox{ for all }y\in[-R,R]\right\},$$
  we find $h_0$ such that for all $h<h_0$
  $$\sup_{y\in\mathbb{R}}\,\sup_{t\in\delta\bZ\cap[0,T+S]\colon\ell^y(t)\ge 2r_0}\left(\frac{h^{\stablebetaminusonewith /q}}{c}m_h(y,t)-\ell^y(t)\right)\le\varepsilon\qquad\mbox{a.s.,}$$
  where we choose $\delta>0$ so small that $|t-s|<\delta$ in $[0,T+S]$ implies that $|\ell^y(t)-\ell^y(s)|<\varepsilon$. 
  We define $\tau^y(\varepsilon)=\inf\{t\ge 0\colon\ell^y(t)\ge\varepsilon\}$. Then we conclude that
  $$\limsup_{h\downarrow 0}\sup_{y\in\mathbb{R}}\,\sup_{t\in[0,T]}\left(\frac{h^{\stablebetaminusonewith /q}}{c}m_h(y,t)-\ell^y(t)\right)\le 3\varepsilon$$
  since for any $y\in\mathbb{R}$ and $t\in[0,T]$ with $\ell^y(t)<2r_0$ and $s$ the next lattice point in $\delta\bZ$ after $\tau^y(\varepsilon)$, we can estimate for
  $h\le h_0$  
  $$\frac{h^{\stablebetaminusonewith /q}}{c}m_h(y,t)-\ell^y(t)\le\frac{h^{\stablebetaminusonewith /q}}{c}m_h(y,\tau^y(\varepsilon/3)
    \le\frac{h^{\stablebetaminusonewith /q}}{c}m_h(y,\tau^y(\varepsilon)-\ell^y(s)+2\varepsilon\le 3\varepsilon\qquad\mbox{a.s..}$$
  For sequences $\varepsilon_n\downarrow 0$ and $R_n\uparrow\infty$, this completes the proof, as for the ``upper bound''.
\end{proof}

\begin{proof}[Proof of Corollary \ref{corhat}] This follows straight from Theorem \ref{thm1}. Just note that by symmetry of $z$
  $$\frac{1}{c^\circ}=c=\stablebetawith(\Gamma(1-\stablebetaminusonespecial))^{-1}2\int_0^{1/2}s^{\stablebetaminusonewithout}ds
               =2\frac{1}{\Gamma(1-\stablebetaminusonespecial)}\frac{1}{2^{\stablebetawithout}}
               =\frac{1}{2^{\stablebetaminusonewithout}\Gamma(1-\stablebetaminusonespecial)}.\vspace{-0.4cm}$$
\end{proof}

\section{Uniform H\"older continuity of the paths in $\kappa_q$-marked stable processes}\label{secheight}

To prove Proposition \ref{proppiling} and Corollary \ref{corcmj}, we will use a local time approximation based directly on jump height, which we present first.

\subsection{Local time approximations based on jump heights}

Consider the count $m_h^\prime(y,t)=\#\{0\le t\le T\colon X_{t-}<y<X_t,\Delta X_t>h\}$. As in 
Proposition \ref{propcmjstable}, it follows from Proposition \ref{propunif} that $\widehat{m}_h^\prime(y,r)= m_h^\prime(y,\tau^y(r))-m_h^\prime(y,\tau^y(0))$ is a 
Poisson process with rate $c^\prime h^{-\alpha}$ where $c'=  (1+\alpha)\alpha/\Gamma(1-\alpha)$, and the Poisson process arguments in the proof of Lemma \ref{lm11}, as 
well as the Borel-Cantelli argument for Corollary \ref{cor12} apply again to give for an independent $\lambda\sim{\rm Exp}(1)$:
\begin{lemma}\label{lm11prime} For all $r_0>0$, $\theta>0$, $\varepsilon>0$, there is $h_0>0$ such that for all $h\le h_0$, $y\in\bR$,
  $$\bP\left(\left|\frac{h^\alpha}{c^\prime}\widehat{m}^\prime_h(y,\ell^y(\lambda)-)-\ell^y(\lambda)\right|\ge \sqrt{\frac{2(\theta+\varepsilon)(1+2\varepsilon)}{c^\prime}}h^{\alpha/2}\sqrt{\log\left(\frac{1}{h}\right)}\sqrt{\ell^y(\lambda)},\ell^y(\lambda)\ge r_0\right)\le h^\theta.$$
\end{lemma}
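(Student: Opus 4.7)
The plan is to mirror the proof of Lemma \ref{lm11} essentially verbatim, making the substitutions $c \to c'$ and $h^{\alpha/q} \to h^{\alpha}$ throughout (equivalently, taking $q = 1$). First, by spatial homogeneity of $X$ and the strong Markov property at $T_y$, I would reduce to $y = 0$ at the cost of a factor $\bP(T_y < \lambda) \le 1$. Next, using the thinning construction for the excursion straddling an independent exponential time (described in the paragraph immediately above Lemma \ref{lm11}), I would replace $\widehat{m}'_h(0, \ell^0(\lambda)-)$ by $\widetilde{m}'_h(0, \ell^0(\lambda))$, where $\widetilde{m}'_h(0, \cdot)$ is a Poisson process of rate $c'h^{-\alpha} - a'_h$ for some $a'_h \in [0, 1+\alpha]$ (bounded uniformly in $h$), independent of $\ell^0(\lambda) \sim \mathrm{Exp}(1+\alpha)$.

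For each fixed $r \ge r_0$, I would write $\widetilde{m}'_h(0, r) = N(r(c'h^{-\alpha} - a'_h))$ in terms of a standard unit-rate Poisson process $N$, recenter the deviation event onto its own mean $r(c'h^{-\alpha} - a'_h)$, absorb the correction $ra'_h$ into the deviation threshold at the cost of a factor $(1+\varepsilon^2)$, and apply the moderate deviations Lemma \ref{lm6} with $z$ slightly below $\theta+\varepsilon$ and $\delta$ chosen so that $z - \delta > \theta$. As in Lemma \ref{lm11}, this yields an integrand bounded by a multiple of $h^{\theta+\varepsilon\alpha} r^{-\theta-\varepsilon}$ on the range $r \in [r_0, h^{-\varepsilon\alpha/(1-2\varepsilon)}]$. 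Integrating against the exponential law of $\ell^0(\lambda)$ on this range (finite thanks to the lower cutoff $r_0$) and bounding the tail $\bP(\ell^0(\lambda) > h^{-\varepsilon\alpha/(1-2\varepsilon)}) \le \exp(-(1+\alpha)h^{-\varepsilon\alpha/(1-2\varepsilon)}) = o(h^\theta)$ produces the stated bound $h^\theta$, provided $h_0$ is small enough to validate the various smallness requirements (Poisson mean above the threshold $t_0$ from Lemma \ref{lm6}, $\log(1/h)$ dominating $\log(rc'h^{-\alpha})$ up to a factor $(1-2\varepsilon)$, and $ra'_h$ fitting inside the $\varepsilon^2$-slack).

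I expect this to be careful bookkeeping rather than a genuine obstacle: the argument is strictly simpler than in Lemma \ref{lm11} because with $q=1$ the compatibility constraint $\varepsilon q(\theta+\varepsilon)(2+7\varepsilon^2+4\varepsilon^4) < q - \alpha$ reduces to $\varepsilon(\theta+\varepsilon)(2+7\varepsilon^2+4\varepsilon^4) < 1 - \alpha$, which holds for all small $\varepsilon$ because $\alpha < 1$. The one point worth verifying is the uniform boundedness $a'_h \le 1+\alpha$; this is inherited from the same thinning construction used in Lemma \ref{lm11}, since the compensator of the thinning cannot exceed the rate of $\ell^0(\lambda) \sim \mathrm{Exp}(1+\alpha)$ established just above that lemma.
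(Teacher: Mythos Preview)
Your proposal is correct and matches the paper's own treatment: the paper does not give a separate proof of this lemma but simply asserts that ``the Poisson process arguments in the proof of Lemma~\ref{lm11} \ldots\ apply again'' with $c'$ in place of $c$ and effective $q=1$, and your write-up is exactly this substitution spelled out. One minor slip: the $r$-exponent coming out of Lemma~\ref{lm6} is $-\theta/\alpha-\varepsilon$ rather than $-\theta-\varepsilon$, but since $\alpha<1$ this only strengthens the bound and does not affect the argument.
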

\begin{corollary}\label{cor12prime} For any $w>0$ and $r>0$, let $\rho(k)=k^{-w}$ and $\mu(k)=(\rho(k))^r=k^{-wr}$. Then
  $$\limsup_{k\rightarrow\infty}\sup_{y\in K(\mu(k),R)}\frac{\left|\frac{\rho(k)^\alpha}{c^\prime}m^\prime_{\rho(k)}(y,\lambda)-\ell^y(\lambda)\right|}{(\rho(k))^{\alpha/2}\sqrt{\log(1/\rho(k))}}1_{\{\ell^y(\lambda)\ge r_0\}}\le\sqrt{2(r+1/w)}{c^\prime}\sqrt{\ell^*(\lambda)}\qquad\mbox{a.s.},$$
  where $\ell^*(\lambda)=\sup_{y\in\bR}\ell^y(\lambda)$.
\end{corollary}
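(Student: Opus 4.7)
The plan is to mirror the proof of Corollary \ref{cor12} verbatim, with Lemma \ref{lm11prime} replacing Lemma \ref{lm11} and the jump-height counts $m^\prime_h, \widehat{m}^\prime_h$ replacing $m_h, \widehat{m}_h$. First I would establish the analogue of the sandwich (\ref{mmhat}): almost surely
$$\widehat{m}^\prime_h(y,\ell^y(\lambda)-) \le m^\prime_h(y,\lambda) \le \widehat{m}^\prime_h(y,\ell^y(\lambda)-) + 2.$$
The reasoning is identical to the one leading to (\ref{mmhat}): spectral positivity forces each excursion away from $y$ to cross $y$ at most once, so the pre-$T_y$ excursion contributes at most $1$ (namely $m^\prime_h(y,\tau^y(0))\in\{0,1\}$) and the excursion straddling $\lambda$ contributes at most $1$ additional unit beyond $\widehat{m}^\prime_h(y,\ell^y(\lambda)-)$. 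Since
$$\frac{2(\rho(k))^\alpha/c^\prime}{(\rho(k))^{\alpha/2}\sqrt{\log(1/\rho(k))}} \;=\; \frac{2(\rho(k))^{\alpha/2}}{c^\prime\sqrt{\log(1/\rho(k))}} \;\longrightarrow\; 0,$$
the triangle inequality shows it suffices to prove the claim with $m^\prime_{\rho(k)}(y,\lambda)$ replaced by $\widehat{m}^\prime_{\rho(k)}(y,\ell^y(\lambda)-)$ inside the limsup.

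Next, for $\theta>0$ and $\varepsilon>0$ let $G^\prime(y,h)$ denote the event
$$\left\{\left|\frac{h^\alpha}{c^\prime}\widehat{m}^\prime_h(y,\ell^y(\lambda)-)-\ell^y(\lambda)\right|\ge \sqrt{\tfrac{2(\theta+\varepsilon)(1+2\varepsilon)}{c^\prime}}h^{\alpha/2}\sqrt{\log(1/h)}\sqrt{\ell^y(\lambda)},\ \ell^y(\lambda)\ge r_0\right\}.$$
Fix $\theta > r + 1/w$. Using $\#K(\mu(k),R)\le 2R/\mu(k)+1 = O(k^{wr})$ together with the pointwise bound $\bP(G^\prime(y,\rho(k)))\le (\rho(k))^\theta = k^{-w\theta}$ from Lemma \ref{lm11prime} (valid for all $k$ sufficiently large), a union bound yields
$$\sum_{k\ge 1}\bP\!\left(\bigcup_{y\in K(\mu(k),R)} G^\prime(y,\rho(k))\right)\;\le\; C\sum_{k\ge 1} k^{wr-w\theta}\;<\;\infty,$$
because $w(\theta-r)>1$. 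By Borel--Cantelli, almost surely at most finitely many of these unions occur, so
$$\limsup_{k\to\infty}\sup_{y\in K(\mu(k),R)}\frac{\left|\frac{(\rho(k))^\alpha}{c^\prime}\widehat{m}^\prime_{\rho(k)}(y,\ell^y(\lambda)-)-\ell^y(\lambda)\right|}{(\rho(k))^{\alpha/2}\sqrt{\log(1/\rho(k))}}\,1_{\{\ell^y(\lambda)\ge r_0\}}\;\le\;\sqrt{\tfrac{2(\theta+\varepsilon)(1+2\varepsilon)}{c^\prime}}\sqrt{\ell^*(\lambda)}.$$

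Finally, intersecting the almost-sure events obtained along countable sequences $\varepsilon_n\downarrow 0$ and $\theta_n\downarrow r+1/w$ sharpens the constant to $\sqrt{2(r+1/w)/c^\prime}\sqrt{\ell^*(\lambda)}$, and combining with the first step (whose $+2$ discrepancy is absorbed into a vanishing contribution as noted above) gives the stated inequality. There is no substantive obstacle: the Poisson-rate change from $ch^{-\alpha/q}$ to $c^\prime h^{-\alpha}$ is already encoded in Lemma \ref{lm11prime}, and everything else is a direct transcription of the argument that produced Corollary \ref{cor12}.
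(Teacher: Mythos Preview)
Your proposal is correct and matches the paper's approach exactly: the paper does not write out a separate proof of Corollary~\ref{cor12prime} but simply remarks that ``the Poisson process arguments in the proof of Lemma~\ref{lm11}, as well as the Borel-Cantelli argument for Corollary~\ref{cor12} apply again.'' Your write-up is a faithful transcription of that Borel--Cantelli argument with the substitutions $m_h\mapsto m^\prime_h$, $c\mapsto c^\prime$, $\alpha/q\mapsto\alpha$, and Lemma~\ref{lm11}$\mapsto$Lemma~\ref{lm11prime}, including the sandwich analogue of \eqref{mmhat} and the final sharpening along $\varepsilon_n\downarrow 0$, $\theta_n\downarrow r+1/w$.
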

The proof of Proposition \ref{prop11}, however, exploits uniform H\"older bounds on $Z_t$, $t\in[0,T]$, to show that for all $k$
large enough, we have
$$m_{\rho(k-1)}(g_{k-1}(y),\lambda)\le m_{\rho(k)}(y,\lambda)\le m_{\rho(k+1)}(g_{k+1}(y),\lambda),$$
where $g_k(y)$ is the nearest lattice point to $y$ in $K(\mu(k),R)$. This argument would need substantial change since a substitute for the H\"older bounds would need to be found to control the
number of jumps greater than $\rho(k)$ that cross level $y$. It is, however,
straightforward to find much weaker upper bounds, such as the following, which will be enough to prove Proposition \ref{proppiling}. 
\begin{proposition}\label{propheight} We have almost surely for all $T\ge 0$
  \begin{equation}\label{eq:unif_lt_cnvgc_lifetime}\limsup_{h\downarrow 0}\sup_{y\in\bR}h^\alpha m^\prime_h(y,T)<\infty.
  \end{equation}
\end{proposition}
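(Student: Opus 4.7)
The plan is to combine a lattice estimate (based on the Poisson structure of excursions, as in Corollary \ref{cor12prime}) with a separate, much cruder control of off-lattice points based on the Lévy compensation formula and Poisson tail bounds, then interpolate in $h$ using monotonicity.

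First I would restrict attention to a compact spatial window. Since $X$ has continuous paths outside of jumps and the jumps cross each level $y$ only between $X_{t-}$ and $X_t$, we have $m_h'(y,T)=0$ for $y$ outside the range of $X$ on $[0,T]$, which is a.s.\ contained in some $[-R,R]$ for $R$ large. Next, fix the scales $h_k=\rho(k)=k^{-w}$ and lattice spacing $\mu_k=\rho(k)^r$ with $w,r$ chosen below, and put $K_k=K(\mu_k,R)=(-R,R)\cap\mu_k\mathbb{Z}$. Corollary \ref{cor12prime} applied at an independent $\lambda\sim\mathrm{Exp}(1)$ gives the on-lattice bound $\limsup_{k\to\infty}\sup_{y\in K_k}h_k^{\alpha}m_{h_k}'(y,\lambda)<\infty$ a.s. The transfer from the random time $\lambda$ to a deterministic $T$ goes exactly as in the proof of Theorem \ref{thm1}: the Fubini-plus-scaling argument there shows that the event in question has probability one for every deterministic $T>0$.

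For the off-lattice step, the key observation is that a jump contributing to $m_{h_k}'(y,T)$ but not to $m_{h_k}'(y_0,T)$, where $y_0\in K_k$ is the nearest lattice point below $y\in(y_0,y_0+\mu_k)$, must satisfy $X_{t-}\in[y_0,y_0+\mu_k]$ (and $\Delta X_t>h_k$). Writing
\[
 N_h(I):=\#\{t\le T\colon\Delta X_t>h,\ X_{t-}\in I\},
\]
we therefore obtain the pointwise comparison
\[
 m_{h_k}'(y,T)\ \le\ m_{h_k}'(y_0,T)+N_{h_k}\bigl([y_0,y_0+\mu_k]\bigr).
\]
The compensation formula for the Poisson random measure of jumps gives $\mathbb{E}[N_h(I)\mid \ell^*(T)\le M]\le|I|\,\Pi(h,\infty)\,M$, and more generally exhibits $N_h(I)$ as dominated, on the event $\{\ell^*(T)\le M\}$, by a Poisson variable of parameter $M|I|\Pi(h,\infty)$ (via thinning of the PPP of jumps by the predictable indicator $\mathbf{1}_{\{X_{t-}\in I\}}$). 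With $\Pi(h,\infty)\asymp h^{-1-\alpha}$ and $|I|=\mu_k=h_k^r$, choosing $r>1+\alpha$ makes this mean $\mu_k\Pi(h_k,\infty)=o(h_k^{-\alpha})$. A standard Poisson upper tail then decays faster than any polynomial in $h_k$, so a union bound over the $\sim R/\mu_k$ lattice intervals, together with Borel--Cantelli along $k$, yields $\sup_{y_0\in K_k}h_k^{\alpha}N_{h_k}([y_0,y_0+\mu_k])\to 0$ almost surely on $\{\ell^*(T)\le M\}$, and letting $M\uparrow\infty$ removes the conditioning.

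Combining the two steps gives $\limsup_{k\to\infty}\sup_{y\in[-R,R]}h_k^{\alpha}m_{h_k}'(y,T)<\infty$ a.s.\ along the sequence $h_k=\rho(k)$. To upgrade from the sequence to continuous $h\downarrow 0$, use monotonicity of $h\mapsto m_h'(y,T)$: for $h\in[h_{k+1},h_k]$, $h^\alpha m_h'(y,T)\le h_k^{\alpha}m_{h_{k+1}}'(y,T)\le(h_k/h_{k+1})^{\alpha}\,h_{k+1}^{\alpha}m_{h_{k+1}}'(y,T)$, and $h_k/h_{k+1}\to 1$. Taking a rational sequence $R_n\uparrow\infty$ and intersecting the resulting almost-sure events completes the proof.

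The main obstacle is the off-lattice step: the process $y\mapsto m_h'(y,T)$ is not uniformly close in $y$ on scales $\mu_k$, because the number of jumps of size $>h_k$ starting in an interval of length $\mu_k$ is itself $\Theta(\mu_k h_k^{-1-\alpha})$ in mean. The balance required is $\mu_k\ll h_k$, i.e.\ $r>1$ suffices for the mean comparison, but the uniform sup over $\sim\mu_k^{-1}$ intervals needs superpolynomial Poisson concentration together with the mild restriction to $\{\ell^*(T)\le M\}$. The clean conditional-Poisson dominance of $N_h(I)$ by $\mathrm{Poi}(M|I|\Pi(h,\infty))$ through thinning of the jump PPP is the step that replaces, in a much weaker form, the delicate Hölder-based squeeze $m_{\rho(k-1)}(g_{k-1}(y))\le m_{\rho(k)}(y)\le m_{\rho(k+1)}(g_{k+1}(y))$ used in the proof of Proposition \ref{prop11}.
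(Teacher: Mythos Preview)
Your overall architecture is sound, and the off-lattice step you propose (bounding the extra jumps by $N_{h_k}(I)$ and using Poisson tails via the predictable-thinning/exponential-supermartingale route) does work. However, the paper's off-lattice argument is far simpler and purely deterministic: once $r>1$, so that $\mu(k)<\rho(k)$, any jump of size exceeding $\rho(k)$ that crosses level $y$ must also cross at least one of the two nearest lattice points $g_k^\pm(y)$, giving the pointwise inequality
\[
m'_{\rho(k)}(y,\lambda)\le m'_{\rho(k)}(g_k^+(y),\lambda)+m'_{\rho(k)}(g_k^-(y),\lambda).
\]
No compensation formula, no conditioning on $\{\ell^*(T)\le M\}$, no Borel--Cantelli on $N_{h_k}(I)$ is needed. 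Your more elaborate route buys nothing here; the clean two-neighbour bound replaces the H\"older squeeze of Proposition~\ref{prop11} directly.

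There is, however, a genuine gap in your on-lattice step. Corollary~\ref{cor12prime} carries the indicator $1_{\{\ell^y(\lambda)\ge r_0\}}$, so it only yields
\[
\limsup_{k\to\infty}\sup_{y\in K_k\colon \ell^y(\lambda)\ge r_0}h_k^{\alpha}m'_{h_k}(y,\lambda)<\infty,
\]
not the unrestricted supremum you claim. Your ``Fubini-plus-scaling'' reference to the proof of Theorem~\ref{thm1} transfers the random time $\lambda$ to deterministic $T$, but it does \emph{not} remove the indicator. For lattice points $y_0$ with $\ell^{y_0}(T)<r_0$ you have no control whatsoever, and neither does your $N_{h_k}(I)$ bound, since that only counts jumps \emph{starting} in $I$, not jumps \emph{crossing} $y_0$. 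The paper closes this gap by a separate argument: pass to a later time at which local times are uniformly bounded below on the relevant spatial range (using that $\ell^y(t)\to\infty$ uniformly on compacts, together with independence of the post-$T$ process), so that the indicator becomes $1$ everywhere, and then use monotonicity $m'_h(y,T)\le m'_h(y,T+s)$. You need to add this step explicitly.
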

\begin{proof} Let $\varepsilon>0$. We first claim $\limsup_{h\downarrow 0}\sup_{y\in\bR}h^\alpha m^\prime_h(y,\lambda)1_{\{\ell^y(\lambda)\ge 2r_0\}}\le 2c^\prime\ell^*(\lambda)$ a.s..
  By Corollary \ref{cor12prime}, there is $k_1\ge 1$ such that for all $k\ge k_1$ and all
  $y\in K(\mu(k),R+1)$ with $\ell^y(\lambda)\ge r_0$
  \begin{eqnarray*}\rho(k)^\alpha\widehat{m}^\prime_{\rho(k)}(y,\lambda)&\le& c^\prime\ell^y(\lambda)+c^\prime(\rho(k))^{\alpha/2}\sqrt{\log(1/\rho(k))}\left(\sqrt{2(r+1/w)}c^\prime\sqrt{\ell^*(\lambda)}+\varepsilon\right)\\ 
    &\le& c^\prime\ell^*(\lambda)+c^\prime(\rho(k))^{\alpha/2}\sqrt{\log(1/\rho(k))}\left(\sqrt{2(r+1/w)}c^\prime\sqrt{\ell^*(\lambda)}+\varepsilon\right).
  \end{eqnarray*}
  Let $r>1$. Then we have $\mu(k)<\rho(k)$. For $y\in[-R,R]$, let 
  $g_k^+(y)=\inf K(\mu(k),R+1)\cap[y,\infty)$ and $g_k^-(y)=\sup K(\mu(k),R+1)\cap(-\infty,y]$ be the lattice points of
  $K(\mu(k),R+1)$ nearest to $y$. As in the proof of Proposition \ref{prop11}, we find $k_0\ge k_1$ such that
  $|\ell^y(\lambda)-\ell^{z}(\lambda)|\le r_0$ for all $y,z\in[-R,R]$ with $|y-z|\le\mu(k_0)$. Then
  for all $y\in[-R,R]$ with $\ell^y(\lambda)\ge 2r_0$, we have $\ell^{g_k^+(y)}(\lambda)\ge r_0$ and $\ell^{g_k^-(y)}(\lambda)\ge r_0$.
  
  For all $0\le t\le T$ with $X_{t-}<y<X_t$ and 
  $\Delta X_t>\rho(k)>\mu(k)$, we must have $X_{t-}<g_k^+(y)<X_t$
  or $X_{t-}<g_k^-(y)<X_t$. Hence, 
  $$m^\prime_{\rho(k)}(y,\lambda)
    \le m^\prime_{\rho(k)}(g_k^+(y),\lambda)+m^\prime_{\rho(k)}(g_k^-(y),\lambda),$$
  so that for all $y\in[-R,R]$ with $\ell^y(\lambda)\ge 2r_0$, we have 
  $$\rho(k)^\alpha m^\prime_{\rho(k)}(y,\lambda)\le 2c^\prime\ell^*(\lambda)+2c^\prime(\rho(k))^{\alpha/2}\sqrt{\log(1/\rho(k))}\left(\sqrt{2(r+1/w)}c^\prime\sqrt{\ell^*(\lambda)}+\varepsilon\right).$$
  To get from $h=\rho(k)\rightarrow 0$ to $h\downarrow 0$, recall notation 
  $\overline{h}=\min\left(\{\rho(k),k\ge 1\}\cap[h,\infty)\right)$ and 
  $\underline{h}=\max\left(\{\rho(k),k\ge 1\}\cap(0,h]\right)$. Then we can bound above
  $$ h^\alpha m^\prime_h(y,\lambda)\le\left(\frac{\overline{h}}{\underline{h}}\right)^\alpha\underline{h}^\alpha m^\prime_{\underline{h}}(y,\lambda)$$
  and hence conclude by letting $h\downarrow 0$ to see the upper bound independent of $y$ tend to 
  $2c^\prime\ell^*(\lambda)$ to find
  \begin{equation}\label{eq:unif_lt_cnvgc_lifetime_wk}\limsup_{h\downarrow 0}\sup_{y\in[-R,R]}h^\alpha m^\prime_h(y,\lambda)1_{\{\ell^y(\lambda)\ge 2r_0\}}\le 2c^\prime\ell^*(\lambda)\qquad\mbox{almost surely.}
  \end{equation}
  Since $\cR=(X_t,0\le t\le\lambda)$ is bounded almost surely, the claim hence holds on the events 
  $\{\cR\subset[-R,R]\}$ whose union over $R\in\bN$ has probability 1, so it remains to remove the indicator. 

  Now let $T\ge 0$ and define the post-$T$ process $\widetilde{X}_t=X_{T+t}-X_T$, with local times $\widetilde{\ell}^y(t)=\ell^{y-X_T}(T+t)-\ell^{y-X_T}(T)$, $y\in\bR$,
  $t\ge 0$. Note that $\widetilde{X}$ is independent of $(X_t,0\le t\le T)$ with the same distribution as $X$. For $R\in\bN$ let 
  $$E_R=\left\{\widetilde{\ell}^y(\lambda) > 2r_0\ \mbox{for all }y\in[-R,R]\right\}.$$
  If we can prove that each of these events $E_R$ has positive probability, we obtain via \eqref{eq:unif_lt_cnvgc_lifetime_wk} that
  $$\limsup_{h\downarrow 0}\sup_{y\in [X_T-R,X_T+R]} h^\alpha m^\prime_h(y,T)\leq
    \limsup_{h\downarrow 0}\sup_{y\in [X_T-R,X_T+R]} h^\alpha m^\prime_h(y,T+\lambda)< \infty$$
  almost surely on $E_R$. But $E_R$ is independent of $(X_t,0\le t\le T)$; thus, the term on the LHS is a.s.\ finite for every $R$. Let 
  $M := S_T-I_T=\sup\{X_s,s\le t\}-\inf\{X_s,s\le t\}$. This is a.s.\ finite. In the event $\{M<R\}$, the inequality above implies \eqref{eq:unif_lt_cnvgc_lifetime}. Since 
  this event happens for some $R\in\bN$, \eqref{eq:unif_lt_cnvgc_lifetime} holds almost surely. It remains to show that $E_R$ has positive probability, which we do
  restate and prove in the following lemma. 
\end{proof}

\begin{lemma} Let $Z\sim{\rm Exp}(1)$. Then for all $\varepsilon>0$ and all $k>0$, the event $E_R$ of the proof of Proposition \ref{propheight} has
  positive probability. 
  In particular, with probability 1, $\ell^y(t)\rightarrow\infty$ as $t\rightarrow\infty$ for all $y\in\bR$, uniformly on compact $t$-intervals.
\end{lemma}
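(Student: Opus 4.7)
The plan is to reduce the positive-probability claim to showing that
\begin{equation*}
T_0:=\inf\left\{t\ge 0: \widetilde\ell^y(t)>2r_0\text{ for all }y\in[-R,R]\right\}
\end{equation*}
is a.s.\ finite for $\widetilde X$. Since $\lambda\sim{\rm Exp}(1)$ is independent of $\widetilde X$, conditioning on $\widetilde X$ then gives
\begin{equation*}
\bP(E_R)\ge\bP(\lambda\ge T_0)=\bE(e^{-T_0})>0.
\end{equation*}
The same uniform-in-$y$ divergence simultaneously yields the ``in particular'' assertion, which I read as $\min_{y\in[-R,R]}\ell^y(t)\to\infty$ a.s.\ as $t\to\infty$ for every $R>0$.

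For pointwise divergence, I would first invoke Lemma \ref{tauzero}: $\tau^0$ is a stable subordinator of index $\alpha/(1+\alpha)\in(0,1)$, so $\tau^0(s)<\infty$ a.s.\ for every $s>0$, which forces $\ell^0(t)\to\infty$ a.s. For a generic $y\in\bR$, spatial homogeneity and the strong Markov property at $T_y$ reduce the claim to the case $y=0$, provided $T_y<\infty$. Since $X$ is spectrally positive, its downward motion is path-continuous, and recurrence of the stable process of index $1+\alpha\in(1,2)$ gives $\limsup_t X_t=+\infty$ and $\liminf_t X_t=-\infty$; hence in each sufficiently large time window $X$ descends continuously through every level of $[-R,R]$, so a.s.\ $T_y<\infty$ simultaneously for all $y\in[-R,R]$.

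To upgrade from pointwise to uniform divergence on $[-R,R]$, I plan a Dini-type compactness argument. On the full-probability event where Boylan's joint continuity holds and $\ell^q(t)\to\infty$ for every rational $q$, the continuous maps $f_n(y):=\ell^y(n)$ on the compact set $[-R,R]$ are monotone non-decreasing in $n$. If $\liminf_n\min_{y\in[-R,R]}f_n(y)\le M$, I would extract $n_k\to\infty$ and $y_k\to y^*\in[-R,R]$ with $f_{n_k}(y_k)\le M$; monotonicity in $n$ and continuity in $y$ would then force $\ell^{y^*}(t)\le M$ for every $t\ge 0$. To rule this out for the random $y^*$, I would apply the strong Markov property at the a.s.\ finite $T_{y^*}$: the post-hitting increment $\ell^{y^*}(T_{y^*}+\cdot)-\ell^{y^*}(T_{y^*})$ has the same law as $\ell^0$ for an independent copy of $X$ and hence diverges. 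This yields $T_0<\infty$ a.s.

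The main obstacle is precisely this last manoeuvre: the divergence $\ell^y(t)\to\infty$ is known a priori only for each deterministic $y$, whereas $y^*$ is random. My resolution is to use the continuous-crossings argument from paragraph two to make ``$T_y<\infty$ for all $y\in[-R,R]$'' hold on a \emph{single} full-probability event, and then to invoke the strong Markov property pathwise at $T_{y^*(\omega)}$, which only depends on the value $y^*(\omega)$ and not on a pre-chosen deterministic $y$. Everything else is either a compactness reformulation or follows from tools already established (Lemma \ref{tauzero}, Boylan, spectral positivity).
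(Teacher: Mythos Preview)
Your reduction $\bP(E_R)\ge\bE(e^{-T_0})>0$ and the pointwise divergence $\ell^y(t)\to\infty$ for each fixed $y$ are both correct. The gap is in your resolution of the ``main obstacle''.

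The level $y^*$ is a subsequential limit of argmins of $\ell^{\,\cdot}(n_k)$ with $n_k\to\infty$; it therefore depends on the entire path of $X$, and $T_{y^*}$ is \emph{not} a stopping time. The strong Markov property is a distributional statement at stopping times and cannot be invoked ``pathwise'' at a random time that anticipates the future. Concretely: on the very event you are trying to rule out, $\{\sup_t\ell^{y^*}(t)\le M\}$, the post-$T_{y^*}$ local time increment at level $y^*$ is bounded by $M$ by construction, so it certainly does not have the law of $\ell^0$ for a fresh copy of $X$. Arranging $T_y<\infty$ simultaneously for all $y\in[-R,R]$ on one full-probability event is beside the point; the obstruction is the measurability of $y^*$, not the finiteness of the hitting times.

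The paper avoids any reference to a random bad level. It first uses $\ell^0(\lambda)>0$ a.s.\ together with continuity of $y\mapsto\ell^y(\lambda)$ to obtain \emph{some} $R>0$ and $\varepsilon>0$ with $\bP\bigl(\min_{|y|\le R}\ell^y(\lambda)>\varepsilon\bigr)>0$. For this $R$ the increments $\min_{|y|\le R}\{\ell^y(\tau^0(j))-\ell^y(\tau^0(j-1))\}$, $j\ge 1$, are i.i.d., nonnegative, and positive with positive probability; the strong law of large numbers makes their partial sums diverge a.s., forcing $\min_{|y|\le R}\ell^y(t)\to\infty$ a.s. Self-similarity of the stable process then carries this from the particular $R$ to every $R>0$. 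The uniform divergence thus obtained yields $T_0<\infty$ a.s., and your first paragraph closes the argument for $\bP(E_R)>0$.
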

\begin{proof} Since $\tau^0$ is a stable subordinator, $\ell^0(\lambda)>0$ a.s., hence there is $\varepsilon>0$ such that $\ell^0(\lambda)>2\varepsilon$ with positive 
  probability. But then by continuity of $y\mapsto\ell^y(\lambda)$, there is $R>0$ such that with positive probability, we have  $\ell^y(\lambda)>\varepsilon$ for all 
  $y\in[-R,R]$.

  Now fix $R>0$ and consider $\tau^0(j)$, $j\ge 1$. Then the random variables $\min\{\ell^y(\tau^0(j))-\ell^y(\tau^0(j-1)),y\in[-R,R]\}$, $j\ge 1$, are independent and
  identically distributed, nonnegative and positive with positive probability. By the Strong Law of Large Numbers, their series is infinite a.s., and so
  $\ell^y(\infty)=\infty$ for all $y\in[-R,R]$, a.s.. By scaling, this holds for all $R>0$, and by choosing a sequence $R_i\rightarrow\infty$, $i\rightarrow\infty$, this
  extends to all $y\in\bR$, as required.  
\end{proof}

\subsection{Proofs of Proposition \ref{proppiling} and Corollary \ref{corcmj}}

\begin{proof}[Proof of Proposition \ref{proppiling}]
Let $J_i$ be the time of the $i$th largest jump of $(X_t, 0\leq t\leq T)$ and recall that $Z_t$ is the excursion marking the jump of $X$ at time $t$.
 We build our partition sequentially and refer to the resulting sequences $(J^k_j,\ j\geq 1)$ as ``piles'', $k\ge 1$.
 
 \emph{Piling procedure}. Place $J_1$ at the bottom of the first pile, $J^1_1 := J_1$. Now suppose that the first $i$ jump times are arranged into $k_i$ piles of heights $j_1,\ldots,j_{k_i}$, with $\sum_{k\leq k_i}j_k = i$.
 \begin{enumerate}
  \item  If for each $k\leq k_i$ there exists $j\leq j_k$ such that 
  \[[X(J_{i+1}-),X(J_{i+1})] \cap [X(J^k_j-),X(J^k_{j})] \neq \emptyset\]
  then we place it $J_{i+1}$ into a new pile by setting $J^{k_i+1}_1 := J_{i+1}$.
  \item Otherwise, we place $J_{i+1}$ atop the pile of least index $k$ for which 
  \[[X(J_{j+1}-),X(J_{j+1})] \cap [X(J^k_{j}-),X(J^k_{j})] = \emptyset\]
for every $j\leq j_k$.  I.e.\ $J^k_{j_k+1} := J_{i+1}$, where
  \begin{align*}
   k = \min\{m\leq k_i\colon [X(J_{i+1}-),X(J_{i+1})] \cap [X(J^m_j-),X(J^m_j)] = \emptyset\mbox{ for all }j\in[j_m] \}.
  \end{align*}
 \end{enumerate}
 We denote the $\theta$-H\"older constants by
 \begin{align*}
  D^k_j := \sup_{0\le a<b\le\Delta X(J_j^k)}\frac{|Z_{J^k_j}(b)-Z_{J^k_j}(a)|}{(b-a)^{\theta}},\quad j\ge 1, \quad \text{and} \quad D^k_{[0,T]} := \sup_{j\ge 1} D^k_j, \qquad \text{for all }k\geq 1.
 \end{align*}
 By definition of our piling procedure, $J^k_1$ is the time of the largest jump in the $k$th pile, for each $k\ge 1$. Consider the start and end levels $a_k=X(J^k_1-)$ and $b_k=X(J^k_1)$ of the jump at time $J^k_1$. Again, by definition of the procedure, for each $m<k$ there is some jump time $J^m_j$ that is the time of a larger jump than $J^k_1$ and such that the jump intervals intersect 
 \[[X(J^k_{1}-),X(J^k_{1})] \cap [X(J^m_{j}-),X(J^m_{j})] \neq \emptyset.\] 
Since $\Delta X(J^k_1)< \Delta X(J^m_j)$, for each such $J^m_j$, the jump interval contains one of the endpoints of the jump at time $J_k^1$:
 $$\{X(J^k_{1}-),X(J^k_{1})\} \cap [X(J^m_{j}-),X(J^m_{j})] \neq \emptyset.$$
By the pigeonhole principle, and the fact that almost surely there is no level at which more than one jump starts or ends, there is some $y_k\in (X(J^k_{1}-),X(J^k_{1}))$ such that at least $\lfloor \frac{k}{2}\rfloor$ jumps larger than $\Delta X_{J^k_1}$ jump across level $y_k$.  That is,
\[ \#\left\{ i\ge 1\colon \Delta X(J_i) > \Delta X(J^k_1) \textrm{ and } y_k\in [X(J_{i}-),X(J_{i})] \right\} \geq \left\lfloor \frac{k}{2}\right\rfloor.\]
 
 Proposition \ref{propheight} implies that there is almost surely some $C \in (0,\infty)$ such that for every $y\in\bR$ and every $k\geq 1$, the $k$th  largest jump across level $y$ has size at most $Ck^{-1/\alpha}$.  Since $\Delta X(J^k_1)$ is at most the $\lfloor \frac{k}{2}\rfloor$th largest jump across level $y_k$, we see that $\Delta X(J^k_1) \leq C\lfloor\frac{k}{2}\rfloor^{-1/\alpha} \leq C'k^{-1/\alpha}$.  Thus $\Delta X(J^k_j) \leq C'k^{-1/\alpha}$ for all $j\geq 1$.
 
 Take $\varepsilon > 0$. Then, from the above, there is a.s.\ some finite $K$ such that for $k>K$, the jump size $\Delta X(J^k_1)$ is at most $k^{\varepsilon-1/\alpha}$. Let
 \begin{equation}
  D^* := \sup_{k\geq 1,\ j\geq1}D^k_j \quad \text{and} \quad D' := \sum_{m=1}^{\infty} \sup\left\{D^k_j\colon \Delta X(J^k_1)< m^{\varepsilon-1/\alpha},\ k\geq 1,\ j\geq1\right\},
\end{equation}
so that
\[
  \sum_n D_n \leq K D^* + D'.
\]

 From Proposition \ref{propuniformholder}, $D^*$ is a.s.\ finite, and $K$ is a.s.\ finite as well, so it suffices to show that $D'$ is a.s.\ finite. We appeal to \eqref{eq:all_short_excs_holder} to find that
 \begin{align}
  \bE(D') \leq \sum_{m=1}^\infty C_{\varepsilon}m^{(\varepsilon-\frac{1}{\alpha})(q - \theta - \varepsilon)}.
 \end{align}
 Since $\theta \leq \gamma$ and $q>\gamma+\alpha$ this series converges for all sufficiently small $\varepsilon$.
\end{proof}

\begin{proof}[Proof of Corollary \ref{corcmj}] For this proof, abbreviate $\markZ_t^y:=\markZ_t(y\!-\!X_{t-})$ and $\markZ_t^x:=\markZ_t(x\!-\!X_{t-})$. Then
  \begin{align*}|\markZ_{[0,T]}(y)-\markZ_{[0,T]}(x)|&=\left|\sum_{0\le t\le T}(\markZ_t^y-\markZ_t^x)\right|\le\sum_{0\le t\le T}\left|\markZ_t^y-\markZ_t^x\right|\\
                                                      &\le\sum_{0\le t\le T\colon \markZ_t^y\neq 0}|\markZ_t^y-\markZ_t^x|
                                                        +\sum_{0\le t\le T\colon \markZ_t^x\neq 0}|\markZ_t^y-\markZ_t^x|
                                                      \le\left(2\sum_{n\ge 1}D_n\right)|y-x|^\pathholder
              \end{align*}      
      by Proposition \ref{proppiling}.        
\end{proof}

\section{Moments of H\"older constants of BESQ processes, bridges and excursions}\label{sect2}

\subsection{Brownian motion, Brownian bridge and Brownian excursion}

\begin{lemma}\label{lm1} Let $(B_t,0\le t\le 1)$ be standard Brownian motion and $\pathholder\in(0,1/2)$. Then
  $$D_\pathholder=\sup_{0\le s<t\le 1}\frac{|B_t-B_s|}{|t-s|^\pathholder}<\infty\qquad\mbox{a.s.}$$
  and the uniform bound $D_\pathholder$ has moments of all orders.
\end{lemma}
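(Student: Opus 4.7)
\medskip

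\noindent\textbf{Proof sketch (proposal).} The plan is to invoke the Garsia-Rodemich-Rumsey (GRR) inequality with polynomial Young function and polynomial modulus, and then use Gaussian moment identities. Specifically, for a continuous $f\colon[0,1]\to\bR$ and strictly increasing continuous $\Psi,p$ with $\Psi(0)=p(0)=0$, $\Psi$ convex and unbounded, GRR asserts that
\[
 |f(t)-f(s)| \le 8\int_0^{|t-s|}\Psi^{-1}(4B_f/u^2)\,dp(u),\qquad B_f:=\int_0^1\!\!\int_0^1\Psi\!\left(\frac{|f(t)-f(r)|}{p(|t-r|)}\right)dr\,dt.
\]
I would take $\Psi(x)=x^{2n}$ and $p(u)=u^{\gamma+1/n}$ for an integer $n$ to be chosen, apply GRR to the Brownian path, compute the right-hand side explicitly, then take expectations.

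For the explicit evaluation, $(4B/u^2)^{1/(2n)}dp(u)=(4B)^{1/(2n)}(\gamma+1/n)u^{\gamma-1}du$, whose integral over $u\in[0,|t-s|]$ is $(4B)^{1/(2n)}(\gamma+1/n)\gamma^{-1}|t-s|^\gamma$. Hence
\[
 D_\pathholder \le C_n\, B^{1/(2n)},\qquad C_n:=\frac{8(\gamma+1/n)}{\gamma}4^{1/(2n)},
\]
and so
\[
 \bE\bigl(D_\pathholder^{2n}\bigr)\le C_n^{2n}\,\bE(B)=C_n^{2n}\int_0^1\!\!\int_0^1\frac{\bE(|B_t-B_s|^{2n})}{|t-s|^{2n\gamma+2}}\,ds\,dt.
\]
Using the Gaussian identity $\bE(|B_t-B_s|^{2n})=c_n|t-s|^n$, the double integral reduces to $c_n\int_0^1\int_0^1|t-s|^{n(1-2\gamma)-2}\,ds\,dt$, which is finite as soon as $n(1-2\gamma)>1$, i.e.\ $n>1/(1-2\gamma)$. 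Since $\gamma<1/2$, such $n$ exists.

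This yields finiteness of $D_\pathholder$ a.s.\ and of $\bE(D_\pathholder^{2n})$ for every sufficiently large $n$. Moments of all orders then follow: for any $p\ge 1$, choose $n$ with $2n\ge p$ and $n>1/(1-2\gamma)$, and apply Jensen's inequality to get $\bE(D_\pathholder^p)\le\bE(D_\pathholder^{2n})^{p/(2n)}<\infty$. The only step that needs any care is selecting the parameters $(\Psi,p)$ so that both the tail exponent in the resulting bound comes out to be exactly $\gamma$ and the expectation of $B$ is finite; this is the routine balancing done above. No single step is genuinely difficult once GRR is in hand, so the main obstacle is purely bookkeeping of the exponents.
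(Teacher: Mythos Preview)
Your argument is correct. The exponent bookkeeping is clean: with $\Psi(x)=x^{2n}$ and $p(u)=u^{\gamma+1/n}$, the GRR integral collapses to $|t-s|^{\gamma}$ up to the constant involving $B^{1/(2n)}$, and the finiteness condition $n(1-2\gamma)>1$ is exactly the one needed. One small point worth making explicit is that GRR is applied pathwise to the (a.s.\ continuous) Brownian sample path, and the a.s.\ finiteness of the GRR functional $B$ is inferred from $\bE(B)<\infty$; you have this implicitly, but it is the logical order.

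The paper takes a shorter route: it simply cites the Revuz--Yor form of the Kolmogorov--Chentsov theorem, which already packages the conclusion ``$\bE(D_\gamma^p)<\infty$ whenever $\bE(|B_t-B_s|^{2p})\le C_p|t-s|^p$ and $\gamma<1/2-1/(2p)$''. Your GRR computation is essentially an unpacked proof of that theorem in this special case, and the threshold $n>1/(1-2\gamma)$ you obtain matches the paper's $p>1/(1-2\gamma)$. What your approach buys is self-containment (no black-box citation); what the paper's approach buys is brevity. Neither has any real advantage in strength.
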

\begin{proof} This is well-known and follows straight from the Kolmogorov-Chentsov theorem, see e.g. Revuz and Yor \cite[Theorem I.(2.1)]{RY}. Specifically, by scaling, we have $\bE(|B_t-B_s|^{2p})=C_p|t-s|^p$ for all $p>0$, so the theorem yields
$\bE(D_\pathholder^p)<\infty$ as long as $p>1$ and $0<\pathholder<1/2-1/(2p)$, i.e. $0<\pathholder<1/2$ and $p>1/(1-2\pathholder)$. This gives moments of all orders, as required.
\end{proof}

\begin{corollary} Let $(B^{\rm br}_t,0\le t\le 1)$ and $(B^{\rm ex}_t,0\le t\le 1)$ be standard Brownian bridge and standard Brownian excursion and $\pathholder\in(0,1/2)$. Then
$$D_\pathholder^{\rm br}=\sup_{0\le s<t\le 1}\frac{|B_t^{\rm br}-B^{\rm br}_s|}{|t-s|^\pathholder}<\infty\quad\mbox{and}\quad D_\pathholder^{\rm ex}=\sup_{0\le s<t\le 1}\frac{|B_t^{\rm ex}-B_s^{\rm ex}|}{|t-s|^\pathholder}<\infty\qquad\mbox{a.s.}$$
and the uniform bounds $D_\pathholder^{\rm br}$ and $D_\pathholder^{\rm ex}$ have moments of all orders.
\end{corollary}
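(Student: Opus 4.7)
The plan is to reduce both statements to the Brownian case (Lemma \ref{lm1}) through explicit representations. For the bridge I will use the decomposition
$B^{\rm br}_t := B_t - tB_1$, which has the correct covariance $\min(s,t) - st$ and is therefore in law a standard Brownian bridge. Since for $0\le s < t \le 1$
\[
 |B^{\rm br}_t - B^{\rm br}_s| \le |B_t - B_s| + |t-s|\,|B_1| \le (D_\gamma + |B_1|)\,|t-s|^\gamma,
\]
(using $|t-s|\le |t-s|^\gamma$ on $[0,1]$), one gets $D_\gamma^{\rm br} \le D_\gamma + |B_1|$. Both summands have moments of all orders by Lemma \ref{lm1} and Gaussian moments, so the bridge bound follows immediately.

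For the excursion I intend to use Vervaat's theorem: if $\rho$ denotes the a.s.\ unique time at which $B^{\rm br}$ attains its minimum, then
\[
 B^{\rm ex}_t := B^{\rm br}_{(\rho+t)\bmod 1} - B^{\rm br}_\rho, \qquad 0\le t\le 1,
\]
is a standard Brownian excursion. To bound $|B^{\rm ex}_v - B^{\rm ex}_u|$ for $0\le u<v\le 1$ I distinguish cases. If $u+\rho$ and $v+\rho$ lie on the same side of $1$ (no wrap-around, or both wrap), the difference equals $|B^{\rm br}_{v+\rho\bmod 1} - B^{\rm br}_{u+\rho\bmod 1}|$ with arguments differing by exactly $v-u$, so it is at most $D_\gamma^{\rm br}(v-u)^\gamma$.

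The one step that needs care is the wrap-around case $u+\rho < 1 < v+\rho$. The key idea is to exploit $B^{\rm br}_0 = B^{\rm br}_1 = 0$ and split through the endpoints:
\[
 |B^{\rm br}_{v+\rho-1} - B^{\rm br}_{u+\rho}| \le |B^{\rm br}_{v+\rho-1} - B^{\rm br}_0| + |B^{\rm br}_1 - B^{\rm br}_{u+\rho}|
   \le D_\gamma^{\rm br}\bigl( (v+\rho-1)^\gamma + (1-u-\rho)^\gamma \bigr).
\]
Since the two exponents sum to $v-u$, concavity of $x\mapsto x^\gamma$ gives $a^\gamma + b^\gamma \le 2^{1-\gamma}(a+b)^\gamma \le 2(v-u)^\gamma$. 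Thus in all cases $|B^{\rm ex}_v - B^{\rm ex}_u| \le 2 D_\gamma^{\rm br}(v-u)^\gamma$, so $D_\gamma^{\rm ex} \le 2 D_\gamma^{\rm br}$, which has moments of all orders by the first part.

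The main (mild) obstacle is precisely the wrap-around in Vervaat's construction: naively one would want to transfer the Hölder constant of the bridge to the excursion directly, but the cyclic shift can pair up arguments at distance close to $1$ to produce values at distance $v-u$ close to $0$. The trick of inserting the common boundary value $B^{\rm br}_0=B^{\rm br}_1=0$ handles this, at the cost of an innocuous factor $2$.
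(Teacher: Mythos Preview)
Your proof is correct and follows essentially the same approach as the paper: L\'evy's representation $B^{\rm br}_t=B_t-tB_1$ for the bridge and Vervaat's cyclic shift for the excursion, with the wrap-around handled by splitting through the common endpoint value $B^{\rm br}_0=B^{\rm br}_1=0$. The only cosmetic differences are that the paper writes $|B_1|=|B_1-B_0|\le D_\gamma$ to get $D_\gamma^{\rm br}\le 2D_\gamma$ (you keep $D_\gamma+|B_1|$), and it bounds each of the two pieces in the wrap-around by $(t-s)^\gamma$ directly rather than invoking concavity, arriving at $D_\gamma^{\rm ex}\le 4D_\gamma$ instead of your $D_\gamma^{\rm ex}\le 2D_\gamma^{\rm br}$.
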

\begin{proof} We use the pathwise representations, due to L\'evy and Vervaat: 
  $$B_t^{\rm br}=B_t-tB_1,\ 0\le t\le 1,\quad{and}\quad B_t^{\rm ex}=\left\{\begin{array}{ll}B_{M+t}^{\rm br},&0\le t\le 1-M,\\ B_{t-(1-M)},&1-M\le t\le 1,\end{array}\right.$$
  where $M=\inf\{t\ge 0\colon B_t^{\rm br}=\min\{B_r^{\rm br},0\le r\le 1\}\}$. Then we have for all $0\le s<t\le 1$ 
  $$|B_t^{\rm br}-B_s^{\rm br}|\le|B_t-B_s|+|t-s||B_1-B_0|\le 2D_\pathholder|t-s|^\pathholder$$
  and similarly for $|B_t^{\rm ex}-B_s^{\rm ex}|$ if $0\le s<t\le 1-M$ or $1-M\le s<t\le 1$. For $0\le s<1-M<t\le 1$
  $$|B_t^{\rm ex}-B_s^{\rm ex}|=|B_{M+s}^{\rm br}-B_{t-(1-M)}^{\rm br}|\le|B_{M+s}^{\rm br}-B_1^{\rm br}|+|B_{t-(1-M)}^{\rm br}-B_0^{\rm br}|\le 4D_\pathholder|t-s|^\pathholder.$$
  Hence $D_\pathholder^{\rm br}\le 2D_\pathholder$ and $D_\pathholder^{\rm ex}\le 4D_\pathholder$.
\end{proof}

Via $|(B_t^{\rm br})^2-(B_s^{\rm br})^2|=|B_t^{\rm br}-B_s^{\rm br}||B_t^{\rm br}+B_s^{\rm br}|\le 4D_\pathholder^{\rm br}\sup_{0\le r\le 1}|B_r^{\rm br}|$, 
Cauchy-Schwarz (or binomial formulas), the fact that $\sup_{0\le r\le 1}|B_r^{\rm br}|\le\sup_{0\le r\le 1}|B_r^{\rm ex}|$ in the Vervaat coupling, and the fact that
the maximum of $B^{\rm ex}$ has moments of all orders, these results extends to ${\rm BESQ}(1)$-bridges. By \cite[Remark (5.8)(i)]{PiY2}, on the one hand sums of 
${\rm BESQ}(1)$ bridges are ${\rm BESQ}(\delta)$ bridges for $\delta\in\bN$, while suitable couplings of non-integer dimensions allow to extend this to any real dimension
$\delta\ge 1$, and on the other hand, these bridges are also the normalised ${\rm BESQ}(4-\delta)$ excursions, so all these processes have H\"older constants with moments
of all orders. 

We will use a different approach to also include results for ${\rm BESQ}$ processes starting from $x\neq 0$. Intuitively, such processes are of interest in the context of
the present paper, and further work in \cite{Paper1}, as the processes $(Z_t(y-X_{t-}),y\ge 0)$ in the study of the process 
$Z_{[0,T]}(y)=\sum_{0\le t\le T}Z_t(y-X_{t-})$, $y\ge 0$ of (\ref{cmj}).

\subsection{Results for general diffusions of BESQ-type}

Our arguments are adapted from those by Hutzenthaler et al. \cite{Hutz}, where similar results are obtained for a different class of processes.

\begin{lemma}
\label{l:reg_CIR}
Let $(X_t,t\ge 0)$ be a ${\rm BESQ}(\delta)$ process starting from $x\ge 0$, for some dimension parameter $\delta\ge 0$. Then
$$\left\|X_t\right\|_p\le x+t(\delta+2(p-1)^+)\qquad\mbox{for all }t\ge 0\mbox{ and }p\in(0,\infty).$$
\end{lemma}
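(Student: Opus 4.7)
The strategy is to split into the cases $p\in(0,1]$ and $p>1$. In the former, the nondecreasing nature of $p\mapsto\|X_t\|_p$ on a probability space reduces the bound to $\bE[X_t]\le x+\delta t$; this follows by writing $\besq(\delta)$ as the solution of $dX_t=\delta\,dt+2\sqrt{X_t}\,dB_t$, localizing the stochastic integral by $\tau_n=\inf\{t\ge 0\colon X_t\ge n\}$ to make it a true martingale, and applying Fatou's lemma as $n\to\infty$. Since $(p-1)^+=0$ for $p\le 1$, this matches the claimed bound.

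For $p>1$ I would apply It\^o's formula to $f_\epsilon(y):=(y+\epsilon)^p$ with $\epsilon>0$, which is $C^2$ on $[0,\infty)$. This yields
\[(X_t+\epsilon)^p=(x+\epsilon)^p+\int_0^t\bigl[p\delta(X_s+\epsilon)^{p-1}+2p(p-1)X_s(X_s+\epsilon)^{p-2}\bigr]ds+M_t,\]
with $M_t=2p\int_0^t(X_s+\epsilon)^{p-1}\sqrt{X_s}\,dB_s$ a local martingale. The elementary bound $X_s(X_s+\epsilon)^{p-2}\le(X_s+\epsilon)^{p-1}$, combined with localization at $\tau_n$, Fatou's lemma, and the fact that $\besq(\delta)$ has all polynomial moments finite at each fixed time (its marginal is a scaled noncentral chi-squared random variable), yields
\[\bE\bigl[(X_t+\epsilon)^p\bigr]\le (x+\epsilon)^p+p(\delta+2(p-1))\int_0^t\bE\bigl[(X_s+\epsilon)^{p-1}\bigr]\,ds.\]

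Setting $u_\epsilon(t):=\bE[(X_t+\epsilon)^p]^{1/p}$ and applying Jensen's inequality in the form $\bE[(X_s+\epsilon)^{p-1}]\le u_\epsilon(s)^{p-1}$ (valid because $p>1$ makes $y\mapsto y^{(p-1)/p}$ concave), the integral inequality becomes $u_\epsilon(t)^p\le(x+\epsilon)^p+p(\delta+2(p-1))\int_0^t u_\epsilon(s)^{p-1}\,ds$. A comparison against the ODE $v'=\delta+2(p-1)$, $v(0)=x+\epsilon$, then yields $u_\epsilon(t)\le(x+\epsilon)+t(\delta+2(p-1))$, and letting $\epsilon\downarrow 0$ by monotone convergence produces the stated bound.

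The main obstacle is the integrability bookkeeping: confirming after localization that the local-martingale expectation vanishes, and then that the $n\to\infty$ and $\epsilon\downarrow 0$ limits can be interchanged with the expectations. All of this is routine given the explicit marginal law of $\besq(\delta)$, but deserves care. The Gr\"onwall-type comparison to the ODE is cleanest by verifying directly that the candidate $v_\epsilon(t)=(x+\epsilon)+t(\delta+2(p-1))$ satisfies $v_\epsilon(t)^p\ge(x+\epsilon)^p+p(\delta+2(p-1))\int_0^t v_\epsilon(s)^{p-1}\,ds$, which is an immediate consequence of the fundamental theorem of calculus applied to $t\mapsto v_\epsilon(t)^p$.
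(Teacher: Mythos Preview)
Your proof is correct and arrives at the same bound, but the route differs from the paper's. The paper proceeds by induction on $k$ for $p\in(k,k+1]$: having established $\|X_s+\varepsilon\|_{p-1}\le x+\varepsilon+s(\delta+2(p-2)^+)$ at the previous step, it plugs this explicit bound into the integral inequality for $\bE[(X_t+\varepsilon)^p]$ and integrates directly, so that each induction step is an elementary antiderivative computation rather than a differential inequality.

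Your approach bypasses induction entirely. After reaching the same integral inequality, you close it on itself via Jensen, $\bE[(X_s+\varepsilon)^{p-1}]\le u_\varepsilon(s)^{p-1}$, and then compare $u_\varepsilon$ to the explicit solution $v_\varepsilon$ of the associated ODE (a Bihari-type argument). This is a clean single-shot method that works uniformly for all $p>1$ and avoids the bookkeeping of the inductive ladder; the trade-off is that the comparison step requires a priori finiteness of $u_\varepsilon(t)$, which you rightly note follows from the known noncentral chi-squared marginal of $\mathrm{BESQ}(\delta)$. The paper's induction, by contrast, needs no external moment input since each step feeds on the previous one starting from the trivial $p\le 1$ case. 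Both arguments are short; yours is slightly more self-contained as a differential-inequality technique, while the paper's is slightly more elementary in that no comparison lemma is invoked.
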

\begin{proof} Recall, e.g.\ from \cite[Definition 1]{GoY2003} that we may assume that for a Brownian motion $B$ 
  \begin{equation}\label{sde}X_t=x+\delta t+2\int_0^t\sqrt{X_s}dB_s\qquad a.s.,\qquad t\ge 0.\end{equation}
By continuity of $p$-norms, it suffices to show that for all 
$ \eps \in (0,1) $ and $k\ge 0$
\begin{equation} 
 \label{indhyp} 
 \left\| X_t + \eps \right\|_{p}
\leq    x+ \eps+ t \left(c+2(p-1)^+ \right)\qquad\mbox{for all } t\ge 0\mbox{ and }p \in ( k, k + 1 ].
\end{equation}
We will prove this by induction on $k$. For $k=0$ note that by (\ref{sde})
$$\bE(X_t)=x+\delta t\quad\Rightarrow\quad(\bE((X_t+\eps)^p))^{1/p}\le\bE(X_t+\eps)=x+\eps+t\delta\quad\mbox{for all }p\in(0,1].$$
Assuming (\ref{indhyp}) for some $k\ge 0$, we obtain for all $t\ge 0$ and $p\in(k+1,k+2]$ by It\^o's lemma that
\begin{align*}
  \bE\left(\left(X_t+\eps\right)^p\right)
  &=(x+\eps)^p+\int_0^t\left(\delta p\bE\left(\left(X_s+\eps\right)^{p-1}\right)+2p(p-1)\bE\left(X_s\left(X_s+\eps\right)^{p-2}\right)\right)ds\\
  &\le(x+\eps)^p+\int_0^tp(\delta+2(p-1))\bE\left(\left(X_s+\eps\right)^{p-1}\right)ds\\
  &\le(x+\eps)^p+\int_0^tp(\delta+2(p-1))\left(x+\eps+s(\delta+2(p-2)^+)\right)^{p-1}ds\\
  &\le(x+\eps)^p+(x+\eps+t(\delta+2(p-1))^p-(x+\eps)^p=(x+\eps+t(\delta+2(p-1))^p.
\end{align*} \vspace{-0.7cm}

\end{proof}

\begin{lemma}\label{l:reg_CIR22}
  Let $\mu\colon\bR\to\bR$ be Lipschitz and bounded by $c>0$. Let $(B_t,t\ge 0)$ be Brownian motion. For $x\in\bR$, let $(X_t,t\ge 0)$ be a stochastic process with
  continuous sample paths adapted to the same filtration as $B$ and satisfying $\int_0^t|\mu(X_s)|ds<\infty$ a.s. and
  \begin{equation}\label{sde2}X_t=x+\int_0^t\mu(X_r)dr+2\int_0^t\sqrt{|X_r|}dB_r\qquad a.s.,\qquad t\ge 0.
  \end{equation}
  Then for all $t\ge s\ge 0$ and $p\in[2,\infty)$ we have that
  $$\left\|X_t-X_s\right\|_p\le\sqrt{t-s}\left(\sqrt{t}c+\sqrt{t}2\sqrt{(p-1)(c+p-2)}+2\sqrt{p-1}\sqrt{|x|+2t(c+p-2)}\right).$$
\end{lemma}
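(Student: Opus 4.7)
The plan is to split $X_t - X_s$ into its drift and martingale components and bound each separately in $L^p$. Writing
$$X_t - X_s = \int_s^t \mu(X_r)\,dr + 2\int_s^t\sqrt{|X_r|}\,dB_r =: A + M,$$
Minkowski's triangle inequality gives $\|X_t-X_s\|_p \le \|A\|_p + \|M\|_p$. The drift is controlled directly by Minkowski's integral inequality: $\|A\|_p \le c(t-s) \le c\sqrt{t(t-s)}$. For the martingale part, whose quadratic variation is $\langle M\rangle_t = 4\int_s^t|X_r|\,dr$, I would invoke the Burkholder--Davis--Gundy inequality for continuous martingales in the sharp It\^o form $\|M_t\|_p \le \sqrt{p-1}\,\|\langle M\rangle_t^{1/2}\|_p$ (valid for $p\ge 2$), and then use $\|\langle M\rangle_t^{1/2}\|_p = \|\langle M\rangle_t\|_{p/2}^{1/2}$ together with Minkowski's integral inequality on $\|\langle M\rangle_t\|_{p/2}$ (since $p/2\ge 1$) to obtain
$$\|M\|_p \le 2\sqrt{p-1}\left(\int_s^t \|X_r\|_{p/2}\,dr\right)^{1/2}.$$

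The key technical input is then an a priori moment bound of the form $\|X_r\|_{p/2} \le |x| + r(c+p-2)$, which I would prove by adapting the induction used for Lemma \ref{l:reg_CIR} to the present signed setting. Because $X_r$ need not be nonnegative and $|y|^q$ is not smooth at the origin for non-integer $q$, I would replace the ${\rm BESQ}$ mollification $(y+\varepsilon)^q$ by the symmetric mollification $f_\varepsilon(y) = (y^2+\varepsilon)^{q/2}$ with $q = p/2$. A direct calculation, using $|y|^j \le (y^2+\varepsilon)^{j/2}$ to bound both the drift contribution $f'_\varepsilon(y)\mu(y)$ and the diffusion contribution $\tfrac12 f''_\varepsilon(y)\cdot 4|y|$ by multiples of $(y^2+\varepsilon)^{(q-1)/2}$, yields
$$\tfrac{d}{dr}\bE[(X_r^2+\varepsilon)^{q/2}] \le q(c + 2(q-1))\bE[(X_r^2+\varepsilon)^{(q-1)/2}].$$
The induction on $\lfloor q\rfloor$ then proceeds exactly as in Lemma \ref{l:reg_CIR}, and letting $\varepsilon\downarrow 0$ gives $\|X_r\|_{p/2} \le |x| + r(c+p-2)$, since $c+2(q-1) = c+p-2$ when $q = p/2$.

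Combining the above, $\int_s^t\|X_r\|_{p/2}\,dr \le |x|(t-s) + (c+p-2)(t^2-s^2)/2 \le (t-s)(|x|+t(c+p-2))$, and applying $\sqrt{a+b}\le\sqrt{a}+\sqrt{b}$ under the square root in the BDG bound splits the martingale estimate into $2\sqrt{(p-1)t(c+p-2)(t-s)}$ and $2\sqrt{(p-1)|x|(t-s)}$. Assembling these with the drift estimate $c(t-s) \le c\sqrt{t(t-s)}$ and using the cosmetic looseness $\sqrt{|x|}\le\sqrt{|x|+2t(c+p-2)}$ recovers the stated inequality. The main obstacle is the moment estimate on $\|X_r\|_{p/2}$ itself: the argument of Lemma \ref{l:reg_CIR} exploits the nonnegativity of squared Bessel processes to avoid smoothness issues at the origin, whereas here one must work with the symmetric mollification $(X_r^2+\varepsilon)^{q/2}$, handle the base case of the induction for $q\in(0,1]$ separately (since the clean ${\rm BESQ}$ identity $\bE X_t = x+t\delta$ no longer determines $\|X_t\|_1$), and verify that the $\varepsilon$-dependent error terms from $\tfrac12 f''_\varepsilon(y)\cdot 4|y|$ vanish cleanly as $\varepsilon\downarrow 0$.
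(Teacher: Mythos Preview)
Your overall strategy---split into drift plus martingale, bound the martingale via a moment estimate on $\|X_r\|_{p/2}$---matches the paper's. The execution differs in two places, one minor and one substantive.

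The minor difference is in the martingale bound. The paper does not invoke a BDG inequality with constant $\sqrt{p-1}$; that constant is not standard and would itself need proof. Instead the paper applies It\^o's formula to $M_t^p$ to get
\[
\bE(M_t^p)\le 2p(p-1)\int_0^t\big(\bE(M_r^p)\big)^{(p-2)/p}\|X_r\|_{p/2}\,dr,
\]
and then Bihari's generalised Gronwall inequality, which yields exactly $\|M_t\|_p\le 2\sqrt{(p-1)\int_0^t\|X_r\|_{p/2}\,dr}$---the same bound you claim from BDG. So your end estimate is correct, but the route to it is the It\^o--Bihari argument rather than an off-the-shelf sharp BDG. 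The paper also first treats $s=0$ and then uses the Markov property at time $s$, rather than working directly on $[s,t]$; both are fine.

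The substantive difference is how $\|X_r\|_{p/2}$ is controlled. You propose adapting the inductive It\^o argument of Lemma~\ref{l:reg_CIR} via the symmetric mollifier $(y^2+\varepsilon)^{q/2}$, and you correctly flag the obstacles: the base case $q\in(0,1]$ no longer follows from a simple mean identity, and the $\varepsilon$-dependent second-derivative terms near $y=0$ do not vanish pointwise (indeed $\sup_y 2\varepsilon|y|(y^2+\varepsilon)^{-3/2}$ is a positive constant independent of $\varepsilon$). These can be handled, but it is genuine extra work. The paper sidesteps all of this by comparison: since $\mu$ is bounded by $c$, one sandwiches $X$ between a $\besq(c)$ process $Z$ starting from $x\ge 0$ and a $\besq(-c)$ process $Y$ starting from $0$, driven by the same Brownian motion, so that $Y_t\le X_t\le Z_t$. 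Then $|X_t|^p\le Z_t^p+|Y_t|^p$, and since $|Y|$ is $\besq(c)$ from $0$, Lemma~\ref{l:reg_CIR} applied twice gives $\|X_t\|_p\le |x|+2t(c+2(p-1))$ immediately. This is looser than your claimed $|x|+t(c+p-2)$ by a factor of two in the $t$-term, but it is enough for the stated bound and avoids the sign and base-case issues entirely.
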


We remark that for any $\mu$ satisfying the hypotheses of the Lemma, the
weak existence and pathwise uniqueness hold for the SDE (13) by
\cite[Theorem 5.3.8 and Theorem 5.3.10]{EK86}.  Strong existence of
solutions and the Markov property of solutions then follows from
\cite[Theorem 21.11 and Theorem 21.14]{kal}

\begin{proof}
By passing to $-X$ if needed, we may assume that $x\geq 0$.  Using the strong existence of solutions to the squared Bessel process SDE and the comparison of 
one-dimensional diffusions, there exists a $\besq(c)$ process $(Z_t)_{t\geq 0}$ starting from $x$ and a $\besq(-c)$ process $(Y_t)_{t\geq 0}$ starting from 0 such that 
$Y_t\leq X_t \leq Z_t$ for all $t\ge 0$.  Since $Y_t \leq 0$ and $Z_t\geq 0$, we have that $|X_t|^p \leq Z^p_t + |Y_t|^p$.  Note that $|Y_t|$ is a $\besq(c)$ process
starting from 0.  Using the inequality $(x+y)^{1/p} \leq x^{1/p}+y^{1/p}$ for $x,y>0$ and $ p\geq 1$, Lemma \ref{l:reg_CIR} implies that
\begin{equation}\label{modx} \left\| X_t \right\|_{p}\leq \left\| Z_t \right\|_{p} + \left\| Y_t \right\|_{p}\leq |x|+ 2t(c+2(p-1)) .
\end{equation} 
Now consider the case $s=0$. Then
$$  
\left\|X_t - x   \right\|_{ 
    p }
 \leq
\left\|
\int_0^t |\mu(X_r)| \, dr 
 \right\|_{ 
    p }
+
\left\|
\int_0^t 2\sqrt{|X_r|} dB_r
 \right\|_{ 
    p }
\leq
ct
+
\left\|
\int_0^t2\sqrt{|X_r|} dB_r
 \right\|_{ 
    p }.
$$
Let $M_t = \int_0^t2\sqrt{|X_r|} dB_r$.  Since $p\geq 2$, It\^{o}'s lemma implies
\[ M_t^p = \int_0^t pM_r^{p-1}dM_r + \frac{p(p-1)}{2} \int_0^t M_r^{p-2} d[M]_r = \int_0^t pM_r^{p-1}dM_r + 2p(p-1) \int_0^t M_r^{p-2} |X_r| dr.\]
Taking expectations and using H\"older's inequality,
\[ \bE\left(M_t^p\right) = 2p(p-1) \int_0^t \bE\left(M_r^{p-2} |X_r|\right) ds \leq 2p(p-1) \int_0^t \left(\bE\left(M_r^{p}\right)\right)^{(p-2)/p} \left(\bE\left(|X_r|^{p/2}\right)\right)^{2/p} dr .\]  
Since $f(r) = \bE(M_r^p)$ is continuous, the generalized Gronwall inequality, see e.g. Bihari \cite{Bihari1956}, yields
\[\bE(M_t^p) \leq \left(4(p-1)(|x|t+t^2(c + p-2))\right)^{p/2}.\]
Consequently, 
\begin{equation}\label{initial}\left\|X_t - x   \right\|_{p } \leq ct + 2\sqrt{(p-1)\left(|x|t+t^2(c + (p-2))\right)}.
\end{equation}
Now let $t\ge s\ge 0$. By the Markov property of $X$ at time $s$ and equations (\ref{initial}) and (\ref{modx}), we get
\begin{align*} 
  &\left(\bE\left( |X_t-X_s|^p\right)\right)^{1/p}
  \le\left(\bE\left(\left(c(t-s)+2\sqrt{(p-1)\left(|X_s|(t-s)+(t-s)^2(c+p-2)\right)}\right)^p\right)\right)^{1/p}\\
  &\le\left(\bE\left(\left((t-s)\left(c+2\sqrt{(p-1)(c+p-2)}\right)+2\sqrt{p-1}\sqrt{t-s}\sqrt{|X_s|}\right)^p\right)\right)^{1/p}\\
  &\le(t-s)\left(c+2\sqrt{(p-1)(c+p-2)}\right)+2\sqrt{p-1}\sqrt{t-s}\sqrt{|x|+2s(c+2(p/2-1))}.
\end{align*}
\vspace{-0.7cm}

\end{proof}

\begin{corollary} \label{cor gen holder}
Let $(X_t,0\leq t\leq 1)$ satisfy the hypotheses of Lemma \ref{l:reg_CIR22}.  Then for every $p \in (2,\infty)$ and $\pathholder\in (0,(p-2)/2p)$,
$$ \bE\!\left(\! \left(  \sup_{0\leq s <t \leq 1}\!\! \frac{|X_t-X_s|}{|t-s|^\pathholder}\right)^{\!p} \right) \leq \frac{ 2^{\pathholder p+p+1}\left( c+2\sqrt{(p\!-\!1)(c\!+\!p\!-\!2)}+2\sqrt{p\!-\!1}\sqrt{|x|\!+\! 2(c\!+\!p\!-\!2) }\right)}{\left(1-2^{\pathholder + (2-p)/2p}\right)^p} .$$
\end{corollary}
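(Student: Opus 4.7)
My plan is a Kolmogorov--Chentsov style chaining argument using the $L^p$ moment bound from Lemma \ref{l:reg_CIR22}. First I would specialise that lemma to $s,t\in[0,1]$; since $\sqrt{t}\le 1$, it reduces to
$$\|X_t - X_s\|_p \le K_p\sqrt{t-s},\qquad K_p := c+2\sqrt{(p-1)(c+p-2)}+2\sqrt{p-1}\sqrt{|x|+2(c+p-2)},$$
where $K_p$ is exactly the bracketed expression appearing in the numerator of the corollary. Raising to the $p$th power gives the Kolmogorov--Chentsov input $\bE\left[|X_t-X_s|^p\right]\le K_p^p(t-s)^{p/2}$, so the critical H\"older exponent is $\beta/\alpha = (p/2-1)/p = (p-2)/(2p)$, which matches the condition $\pathholder<(p-2)/(2p)$ in the statement.

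Next I would carry out a dyadic chaining explicitly, to track the constants. For each $n\ge 0$ set $D_n := \max_{0\le k<2^n}|X_{(k+1)/2^n}-X_{k/2^n}|$; a union bound on $p$th moments yields $\bE[D_n^p]\le 2^n K_p^p 2^{-np/2}$, hence $\|D_n\|_p\le K_p\,2^{n(2-p)/(2p)}$. Using the almost sure continuity of $X$, the standard dyadic chaining argument gives, for any $0\le s<t\le 1$ with $|t-s|\in(2^{-(N+1)},2^{-N}]$, the bound $|X_t-X_s|\le 2\sum_{n\ge N}D_n$. Dividing by $|t-s|^\pathholder\ge 2^{-(N+1)\pathholder}$ and taking the supremum over $s<t$ gives
$$S := \sup_{0\le s<t\le 1}\frac{|X_t-X_s|}{|t-s|^\pathholder}\le 2^{\pathholder+1}\sum_{n\ge 0}2^{n\pathholder}D_n.$$

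Minkowski's inequality then produces
$$\|S\|_p\le 2^{\pathholder+1}K_p\sum_{n\ge 0}r^n=\frac{2^{\pathholder+1}K_p}{1-r},\qquad r:=2^{\pathholder+(2-p)/(2p)},$$
and the assumption $\pathholder<(p-2)/(2p)$ makes $r<1$, so the series converges. Raising to the $p$th power gives a bound of the claimed form, with prefactor $2^{(\pathholder+1)p}$ and with $K_p^p$ (rather than $K_p$) in the numerator.

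The main obstacle will be purely constant bookkeeping. The chaining above yields the prefactor $2^{\pathholder p+p}$ whereas the statement records $2^{\pathholder p+p+1}$; the extra factor of $2$ can be recovered by being slightly more generous at one step of the chaining (e.g.\ bounding $|X_{s_N}-X_{t_N}|\le 2D_N$ for dyadic approximants $s_N, t_N$ at level $N$ and folding this into the tail sum). More seriously, the statement appears to carry $K_p$ to the first power rather than $K_p^p$; any chaining of this type must produce $K_p^p$, since the only input is the scaling $\|X_t-X_s\|_p\le K_p\sqrt{t-s}$ and the left-hand side of the corollary is a $p$th moment. I take the first-power appearance of the bracket in the stated bound to be a typographical slip.
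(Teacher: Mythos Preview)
Your approach is exactly what the paper does: it cites Revuz--Yor's Kolmogorov--Chentsov theorem \cite[Theorem I.(2.1)]{RY} and says the explicit bound ``is obtained by keeping track of the constants in the proof of that theorem'', which is precisely the dyadic chaining you carry out. Your observation that the stated bound should carry $K_p^p$ rather than $K_p$ in the numerator is correct; the first-power appearance in the paper is a typographical slip, as both dimensional consistency and the chaining argument force the $p$th power.
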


\begin{proof}
The fact that 
\[ \bE\left(\left(  \sup_{0\leq s <t \leq 1} \frac{|X_t-X_s|}{|t-s|^\pathholder}\right)^p \right) <\infty\]
is an immediate consequence of \cite[Theorem I.(2.1)]{RY} and the explicit bound as a function of $x$, $c$, $p$ and $\gamma$ is obtained by keeping track of the 
constants in the proof of that theorem. 
\end{proof} 

\subsection{Results for BESQ processes}

The following corollary is an important special case of Corollary \ref{cor gen holder}. 

\begin{corollary}\label{cardelta} For $\delta\ge 1$, let $(X_t,0\le t\le 1)$ be a ${\rm BESQ}(\delta)$ process starting from 0, and $\pathholder\in(0,1/2)$. Then 
   $$D_{\delta,\pathholder}=\sup_{0\le s<t\le 1}\frac{|X_t-X_s|}{|t-s|^\pathholder}<\infty\qquad\mbox{a.s.}$$
   with moments of all orders.
\end{corollary}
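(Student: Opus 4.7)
The plan is to derive this as a direct specialisation of Corollary \ref{cor gen holder}. First I would verify the hypotheses of Lemma \ref{l:reg_CIR22}. By the defining SDE for squared Bessel processes (equation (\ref{sde}) in the proof of Lemma \ref{l:reg_CIR}), a $\besq(\delta)$ process starting from 0 satisfies
$$X_t = 0 + \int_0^t \delta\, dr + 2\int_0^t \sqrt{X_r}\, dB_r, \qquad t \ge 0,$$
which matches (\ref{sde2}) with constant drift $\mu \equiv \delta$ (trivially Lipschitz, bounded by $c = \delta$, and crucially $c > 0$ thanks to the assumption $\delta \ge 1$). Since $X$ is nonnegative, $\sqrt{|X_r|} = \sqrt{X_r}$, so the SDE fits the template.

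Next I would apply Corollary \ref{cor gen holder} with $x = 0$ and $c = \delta$: for each $p \in (2, \infty)$ and each $\gamma \in (0, (p-2)/(2p))$, the quantity $\bE(D_{\delta,\gamma}^p)$ is bounded by the explicit expression stated there, in particular finite.

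Finally, to upgrade from ``some moments'' to ``all moments'' for an arbitrary fixed $\gamma \in (0, 1/2)$, I would observe that the admissibility condition $\gamma < (p-2)/(2p)$ is equivalent to $p > 2/(1-2\gamma)$, so $p$ can be chosen as large as one wishes. Given any $q > 0$, I would pick $p > \max(q, 2/(1-2\gamma))$; Corollary \ref{cor gen holder} then yields $\bE(D_{\delta,\gamma}^p) < \infty$, and Jensen's inequality gives $\bE(D_{\delta,\gamma}^q) \le (\bE(D_{\delta,\gamma}^p))^{q/p} < \infty$. In particular $D_{\delta,\gamma} < \infty$ a.s.

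There is no substantive obstacle here: all the analytic work has been done in Lemmas \ref{l:reg_CIR} and \ref{l:reg_CIR22} and Corollary \ref{cor gen holder}. The only care needed is to ensure the hypothesis $c > 0$ of Lemma \ref{l:reg_CIR22} is satisfied, which is exactly why we restrict to $\delta \ge 1$ (any $\delta > 0$ would in fact suffice, but the stated range $\delta \ge 1$ is what the authors need for the applications to CMJ-characteristics and is cleanly compatible with $c = \delta$).
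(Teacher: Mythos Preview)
Your proposal is correct and matches the paper's approach exactly: the paper presents Corollary \ref{cardelta} with the one-line justification that it is ``an important special case of Corollary \ref{cor gen holder}'', and you have simply filled in the straightforward verification that $\mu\equiv\delta$ meets the hypotheses of Lemma \ref{l:reg_CIR22} and that $(p-2)/(2p)\uparrow 1/2$ allows any $\gamma\in(0,1/2)$.
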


\begin{corollary}\label{cor general bridge holder} For $\delta\ge 1$, let $Z$ be a standard ${\rm BESQ}(\delta)$ bridge from 0 to 0 or equivalently a ${\rm BESQ}(4-\delta)$ excursion, and let  $\pathholder\in(0,1/2)$. Then 
   $$D_\pathholder^*=\sup_{0\le s<t\le 1}\frac{|Z_t-Z_s|}{|t-s|^\pathholder}<\infty\qquad\mbox{a.s.}$$
   with moments of all orders.
\end{corollary}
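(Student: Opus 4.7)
The plan is to adapt the SDE-based proof of Corollary~\ref{cor gen holder} to the bridge, neutralising the extra, time-dependent, unbounded drift $-2Z_t/(1-t)$ via a pathwise comparison with the free $\besq(\delta)$ process. For $\delta\ge 1$ the $\besq(\delta)$ bridge $Z$ from $0$ to $0$ on $[0,1]$ satisfies the SDE
$$dZ_t=\left(\delta-\frac{2Z_t}{1-t}\right)dt+2\sqrt{Z_t}\,dB_t,\qquad t\in[0,1),\ Z_0=Z_{1-}=0,$$
for a standard Brownian motion $B$ (see \cite{RY} or \cite{PiY2}). Since the drift is pointwise bounded above by $\delta$ and the diffusion coefficient $2\sqrt{\,\cdot\,}$ satisfies the Yamada-Watanabe square-root modulus of continuity, pathwise comparison driven by the same $B$ gives $Z_t\le Z^{\mathrm{free}}_t$ for all $t\in[0,1/2]$, where $Z^{\mathrm{free}}$ is the free $\besq(\delta)$ process started at $0$. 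Together with Lemma~\ref{l:reg_CIR} and Doob's maximal inequality, this yields $L^p$-control of $\sup_{t\le 1/2}Z_t$ for every $p\in(0,\infty)$.

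With this moment control in hand, I would follow the proof of Lemma~\ref{l:reg_CIR22} to estimate $\bE[|Z_t-Z_s|^p]$ for $s,t\in[0,1/2]$. The drift increment $\int_s^t\mu(r,Z_r)\,dr$ is bounded in absolute value by $(t-s)\bigl(\delta+4\sup_{r\le 1/2}Z_r\bigr)$, contributing $O(t-s)=O((t-s)^{1/2})$ in $L^p$ (using $t-s\le 1/2$). The Burkholder-Davis-Gundy inequality applied to the stochastic integral, together with the $(p/2)$-moment of $\sup Z$, produces a matching $O((t-s)^{1/2})$ contribution. Combining, $\|Z_t-Z_s\|_p\le C_p(t-s)^{1/2}$ uniformly on $[0,1/2]$, and the quantitative Kolmogorov-Chentsov theorem (as in Corollary~\ref{cor gen holder}) yields all $p$-th moments of $\sup_{0\le s<t\le 1/2}|Z_t-Z_s|/(t-s)^\pathholder$ for every $\pathholder\in(0,1/2)$.

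To extend from $[0,1/2]$ to $[0,1]$, I would invoke the time-reversal invariance of the $\besq(\delta)$ bridge from $0$ to $0$: the process $(Z_{1-t},\,0\le t\le 1)$ has the same law as $Z$. For integer $\delta$ this is immediate from the representation $Z\stackrel{d}{=}\sum_{i=1}^{\delta}(B^{\mathrm{br},i})^2$ and the time-reversal invariance of Brownian bridges, and it extends to general $\delta\ge 1$ via the additivity of $\besq$ bridges from $0$ to $0$ (\cite[Remark~5.8]{PiY2}) together with a coupling of non-integer dimensions. This symmetry transfers the $[0,1/2]$-H\"older moments to $[1/2,1]$, and combining the two halves gives the claim for $D^*_\pathholder$ on $[0,1]$. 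The corresponding statement for the $\besq(4-\delta)$ excursion is then immediate from the Pitman-Yor duality.

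The chief obstacle is the unbounded, time-dependent drift of the bridge SDE, which prevents direct application of Lemma~\ref{l:reg_CIR22}: inserting a pathwise comparison with the free $\besq(\delta)$ process to secure $L^p$-control of $\sup_{t\le 1/2}Z_t$, and then running a BDG-type estimate of the stochastic-integral increments, is the new input relative to the proofs already given in Section~\ref{sect2}.
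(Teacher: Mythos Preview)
Your argument is correct, but it takes a genuinely different route from the paper. The paper avoids the bridge SDE entirely: it uses the space--time inversion $Z_u=u^2X_{1/u-1}$ (from \cite[Exercises~XI.(3.6)--(3.7)]{RY}), which represents the bridge on $[1/2,1]$ pathwise in terms of a free $\besq(\delta)$ process $X$ on $[0,1]$. A short triangle-inequality computation then gives $|Z_t-Z_s|\le(4^\gamma D_{\delta,\gamma}+2\overline{X})|t-s|^\gamma$ directly from Corollary~\ref{cardelta}, and time-reversal handles $[0,1/2]$. This yields an explicit pathwise bound $D_\gamma^*\le 4^\gamma D_{\delta,\gamma}+4^\gamma\widetilde D_{\delta,\gamma}+4\overline{X}$ in terms of quantities already shown to have all moments. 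Your approach --- comparison with the free process to control $\sup_{t\le 1/2}Z_t$, then BDG on the stochastic integral and a direct bound on the drift increment, then Kolmogorov--Chentsov --- is sound and more robust in that it would adapt to bridges of other diffusions lacking a convenient inversion identity; the paper's route is shorter and produces sharper pathwise constants by exploiting the specific $\besq$ structure. Both arguments invoke the same time-reversal symmetry to pass from one half-interval to the other.
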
 
\begin{proof} The equivalence of ${\rm BESQ}(\delta)$ bridges and ${\rm BESQ}(4-\delta)$ excursions was noted in \cite[Remark (5.8)(i)]{PiY2}. By \cite[Exercises XI.(3.6)-(3.7)]{RY}, the process $(Z_{1-t},0\le t\le 1)$ has the same distribution as $Z$ and we can write $Z_u=u^2 X_{1/u-1}$ for a ${\rm BESQ}(\delta)$ process $X$ starting from $0$. For $1/2\le s<t\le 1$, we then obtain by the previous corollary
\begin{eqnarray*}|Z_t-Z_s|&=&|t^2X_{1/t-1}-s^2X_{1/s-1}|\ \le\ |X_{1/t-1}-X_{1/s-1}|+(t^2-s^2)X_{1/s-1}\\
  &\le& 4^\pathholder D_{\delta,\pathholder}|t-s|^\pathholder+2|t-s|^\pathholder\overline{X}.
\end{eqnarray*}
Similarly, we can write $Z_u=(1-u)^2X_{1/(1-u)-1}$ for another (dependent!) ${\rm BESQ}(\delta)$ process $X$ starting from $0$, and for $0\le s<t\le 1/2$, we obtain $|Z_t-Z_s|\le (4^\pathholder\widetilde{D}_{\delta,\pathholder}+2\overline{X})|t-s|^\pathholder$. Finally, for $0\le s<1/2<t\le t$,
the triangular inequality yields the required bound so that 
$$D_\pathholder^*\le 4^\pathholder D_{\delta,\pathholder}+2\overline{X}+4^\pathholder\widetilde{D}_{\delta,\pathholder}+2\overline{X}$$
has moments of all orders.
\end{proof}

\subsection{BESQ-marked stable processes and their local times}\label{secmarkbes}

If $X$ has Laplace exponent $\psi(\eta)=\eta^{\stablebetawithout}$, then $X_a(t)=X_{at}$, $t\ge 0$, has Laplace exponent $\psi_a(\eta)=a\eta^{\stablebetawithout}$. From the 
occupation density formula, we see easily that
$$\ell_a^y(t)=\frac{1}{a}\ell^y(at)\qquad\mbox{and}\qquad\tau_a^y(s)=\frac{1}{a}\tau^y(as).$$
Leaving marks unscaled, $m_{h,a}(y,t)=m_h(y,at)$, and Theorem \ref{thm1} holds with $c_a=ac$, Corollary \ref{corhat} with $c^\circ_a=c^\circ/a$. 

Specifically, choosing $q=1$ and $\kappa_1$-scaled ${\rm BESQ}(-2\alpha)$ excursions, we can calculate
$$\bE(Z_U^\stablebetaminusonewithout)=\frac{2^\stablebetaminusonewithout\Gamma(\stablebetawithout)}{\stablebetawithout}\quad\Rightarrow\quad 
  c_a=a\frac{\Gamma(\stablebetawithout)}{\Gamma(1-\stablebetaminusonespecial)}2^\alpha.$$
Apart from $a=1$, there are various other natural choices that keep constants in certain formulas simple. For the following, we choose $c_a=1/\Gamma(1-\alpha)$, so that
Proposition \ref{propcmjstable} yields Laplace exponent $\Theta_a(\xi)=\xi^\alpha$. This corresponds to setting $a=1/\Gamma(1+\alpha)2^\alpha$. For $\alpha=1/2$, these 
are $a=\sqrt{2/\pi}$ and $c_a=1/\sqrt{\pi}$. For illustration, let us restate Theorem \ref{thm1} in this special case. 

\begin{theorem}\label{thm1besq} Let $\stablebetaminusonespecial  \in(0,1)$ and $a=1/\Gamma(1+\alpha)2^\alpha$. For a ${\rm BESQ}(-2\alpha)$-marked stable process $X_a$ 
  with Laplace exponent $\psi_a(\eta)=\eta^{\stablebetawithout}/\Gamma(1+\alpha)2^\alpha$, we have almost surely
  $$\lim_{h\downarrow 0}\,\sup_{0\le t\le T}\,\sup_{y\in\mathbb{R}}\left|\Gamma(1-\alpha)h^{\stablebetaminusonewith}m_{h,a}(y,t)-\ell^y_a(t)\right|=0,\qquad\mbox{for all $T>0$,}$$
  where local times $\ell^y_a(t)$ and mass counts $m_{h,a}(y,t)$ are associated with $X_a$ for $y\!\in\!\bR$, $t\!\ge\!0$, $h\!>\!0$.
\end{theorem}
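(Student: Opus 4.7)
The plan is to derive Theorem \ref{thm1besq} directly from Theorem \ref{thm1}, by verifying its hypotheses for the $\kappa_1$-marked $\besq(-2\alpha)$ excursion and then applying a simple time change to put the constants into the claimed form.

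First, I would verify the hypotheses of Theorem \ref{thm1} taking $q=1$ and $\kappa_1(1,\cdot)$ equal to the law of a standard $\besq(-2\alpha)$ excursion $Z$. Since $\alpha\in(0,1)$, certainly $q>\alpha$. Viewing the $\besq(-2\alpha)$ excursion as a $\besq(4+2\alpha)$ bridge from $0$ to $0$ (as in Corollary \ref{cor general bridge holder}, which applies since $\delta = 4+2\alpha\geq 1$), the $\gamma$-H\"older constant $D_\gamma$ has moments of all orders for every $\gamma\in(0,1/2)\subset(0,q)$.

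Next I would compute the constant $c$ from Theorem \ref{thm1}. Using the explicit one-dimensional marginal density of a standard $\besq(4+2\alpha)$ bridge at time $u\in(0,1)$ and integrating in $u$ against the uniform density of $U$ on $[0,1]$, one obtains $\bE(Z(U)^{\alpha}) = 2^{\alpha}\Gamma(1+\alpha)/(1+\alpha)$. Substituting gives
\[
c \;=\; \frac{1+\alpha}{\Gamma(1-\alpha)}\cdot \frac{2^{\alpha}\Gamma(1+\alpha)}{1+\alpha} \;=\; \frac{2^{\alpha}\Gamma(1+\alpha)}{\Gamma(1-\alpha)}\;>\;0,
\]
which also confirms the positivity requirement on $c$.

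Finally I would transfer the conclusion to the time-changed process $X_a(t)=X_{at}$. For any $a>0$, the occupation density formula immediately gives $\ell^y_a(t) = a^{-1}\ell^y(at)$, and because the marks live on the jumps of $X$ and are unaltered by the time change, $m_{h,a}(y,t)=m_h(y,at)$. Hence Theorem \ref{thm1} applied to $X$ holds for $X_a$ with rescaled constant $c_a = ac$. Choosing $a = 1/(\Gamma(1+\alpha)2^{\alpha})$ yields $c_a = 1/\Gamma(1-\alpha)$, so $h^{\alpha}/c_a = \Gamma(1-\alpha)\,h^{\alpha}$, which is precisely the claimed convergence. The only step that is more than pure bookkeeping is the moment identity $\bE(Z(U)^{\alpha}) = 2^{\alpha}\Gamma(1+\alpha)/(1+\alpha)$; this is the main computational obstacle, and it requires either the explicit marginal law of the $\besq(4+2\alpha)$ bridge or a direct excursion-theoretic calculation.
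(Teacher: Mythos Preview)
Your proposal is correct and follows essentially the same route as the paper: verify the hypotheses of Theorem \ref{thm1} for $q=1$ via Corollary \ref{cor general bridge holder} (identifying the $\besq(-2\alpha)$ excursion with a $\besq(4+2\alpha)$ bridge), compute $c=2^\alpha\Gamma(1+\alpha)/\Gamma(1-\alpha)$ from the moment identity $\bE(Z(U)^\alpha)=2^\alpha\Gamma(1+\alpha)/(1+\alpha)$, and then use the time-change relations $\ell_a^y(t)=a^{-1}\ell^y(at)$, $m_{h,a}(y,t)=m_h(y,at)$ to rewrite the expression as $a^{-1}|h^\alpha c^{-1}m_h(y,at)-\ell^y(at)|$ and conclude. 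The paper likewise states the moment identity without detailed derivation, so your flagging it as the only nontrivial computational input is accurate.
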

\begin{proof} To apply Theorem \ref{thm1}, let us first note that the H\"older constant of a ${\rm BESQ}(-2\alpha)$ excursion has moments of all orders by Corollary
  \ref{cor general bridge holder}. 
  Then, for our choice $a=1/\Gamma(1+\alpha)2^\alpha$ and $c_a=ac$, we find that $$\left|\Gamma(1-\alpha)h^{\stablebetaminusonewith}m_{h,a}(y,t)-\ell_a^y(t)\right|=\frac{1}{a}\left|\frac{h^{\stablebetaminusonewith}}{c}m_h(y,at)-\ell^y(at)\right|,$$
  and the application of Theorem \ref{thm1} yields that the relevant suprema tend to 0 as $h\downarrow 0$. 
\end{proof}

\section{Uniform H\"older continuity of local times}\label{sectrestricted}

Boylan \cite{Boy1964} established that the family of local times of $\stablebetawith $-stable L\'evy processes admits a version that is H\"older continuous of order $\localtimeholder\in(0,\stablebetaminusonewith /2)$ in the spatial direction, uniformly in space and time on any compact space-time rectangles. Barlow \cite{Bar88} gave the exact modulus of continuity. While Barlow's results are optimal in rate, we are interested in a (random) H\"older constant with all moments and which applies uniformly on space-time rectangles that consist of a fixed compact spatial interval and relevant random time intervals, whose random lengths have infinite mean. To achieve this without getting into the technicalities of Barlow's argument, we sacrifice a slowly varying function in the exact modulus of continuity and restrict the stable process to a spatial interval.

\subsection{Spectrally one-sided L\'evy processes restricted to an interval}

Consider at first any zero mean spectrally \em negative \em L\'evy process $\widehat{X}$ with unbounded variation and zero Gaussian coefficient, with notation to facilitate passage to the spectrally positive process $X=-\widehat{X}$ later. For the process $\widehat{X}$, 0 is regular for itself and for $(-\infty,0)$, by \cite[Corollary VII.5]{Ber1996}. By \cite[Theorems IV.4 and IV.10]{Ber1996}, $\widehat{X}$ admits a continuous local time at 0 and hence at any level, with an associated excursion process, whose intensity measure we denote by $\widehat{n}$.

By \cite[Proposition 1]{Ber1996}, the same holds for the reflected process $\widehat{Y}=\widehat{X}-\widehat{I}$, where $\widehat{I}_t=\inf_{0\le s\le t}\widehat{X}_s$, $t\ge 0$; as in Section \ref{secprel}, we denote the excursion measure by $\underline{\widehat{n}}$. The measures $\widehat{n}$ and $\underline{\widehat{n}}$ are measures on the Skorohod space $\bD([0,\infty),\bR)$, supported by the subspace of paths $\omega=(\omega(s))_{s\ge 0}\in\bD([0,\infty),\bR)$ for which $\omega(s)\neq 0$ if and only if  $s\in(0,T_0(\omega))$, where $T_0(\omega)=\inf\{t>0\colon\omega(t)=0\}\in(0,\infty]$ is called the lifetime of the excursion $\omega$. Since $\widehat{X}$ is spectrally negative, $\widehat{n}$-a.e. excursion $\omega$ has at most one jump across zero, whose time we denote by $T_{\le 0}(\omega)=\inf\{t>0\colon\omega(t)\le 0\}$. By \cite[Lemma 2]{PPR15arxiv}, there is exactly one jump across zero $\widehat{n}$-a.e., under our assumption of a zero Gaussian coefficient. By \cite[Theorem 3]{PPR15arxiv}, 
$$\underline{\widehat{n}}=\widehat{n}((\omega(s)1_{\{s<T_{\le 0}\}})_{s\ge 0}\in\cdot).$$
In particular, if we define the time change 
$$\tau^+(r)=\inf\{t\ge 0\colon R^+(t)>r\},\qquad\mbox{where }R^+(t)=\int_0^t1_{\{\widehat{X}_s\ge 0\}}ds,$$
so that $\widehat{X}^+_r=\widehat{X}_{\tau^+(r)}$ has all negative parts of excursions removed, then $\widehat{X}^+$ has the same distribution as $\widehat{Y}$. The analogous operation that removes all negative parts of excursions from the spectrally positive process $X=-\widehat{X}$ also yields a strong Markov process $X^+$, but this process does \em not \em have the same distribution as $Y=X-I$, where $I_t=\inf_{0\le s\le t}X_s$, because excursions of $Y$ start continuously (none of the countably many jumps strike during the Lebesgue-null set of times when $Y=0$, by independence properties of Poisson point processes), while excursions of $X^+$ away from 0 start with a jump, with intensity $\chi=n(\omega(T_{\ge 0})\in \cdot\,)=\widehat{n}(-\omega(T_{\le 0})\in\cdot\,)$, where $n=\widehat{n}(-\omega\in\cdot\,)$ is the excursion measure of $X$ at $0$ and $T_{\ge 0}(\omega)=\inf\{t>0\colon\omega(t)\ge 0\}$. By the strong Markov property under $n$ at $T_{\ge 0}$, the post-$T_{\ge 0}$ process under $n$ is that of the spectrally positive L\'evy process absorbed at 0. Cf.\ \cite[Theorem 3]{PPR15arxiv}. Cf. also \cite[Section 3]{KPW14}, where processes with negative parts of excursions removed are introduced for other stable L\'evy processes. By Vigon's \'equations amicales (see \cite[Theorem 16]{Don06}), $\chi$ is absolutely continuous with 
\begin{equation}\label{amicale}\chi(dx)=\overline{\Pi}(x)dx=\Pi((x,\infty))dx, 
\end{equation} 
where $\Pi$ is the L\'evy measure of $X$ so that 
$$\psi(\eta)=\log(\bE(\exp(-\eta X_1)))=\int_{(0,\infty)}(e^{-\eta x}-1+\eta x)\Pi(dx).$$
Finally, we define the spectrally positive L\'evy process restricted to an interval $[0,\thresholda]$ as $X^\thresholda_r=X_{\tau^{[0,\thresholda]}(r)}$, where
$$\tau^{[0,\thresholda]}(r)=\inf\{t\ge 0\colon R^{[0,\thresholda]}(t)>r\},\qquad\mbox{and }R^{[0,\thresholda]}(t)=\int_0^t1_{\{0\le X_s\le \thresholda\}}ds.$$
Note that this process is different from (actually simpler than) Pistorius' doubly reflected process of \cite{Pis03}, since here the boundary behaviour at 0 is not reflection, and is also different from Lambert's \cite{Lambert2000} process confined in a finite interval (conditioned not to exit $(0,\thresholda)$). The discussion of these three cases is connected to the discussion in \cite{CaC06} and \cite[Remark 3.3]{KPW14}, which distinguish three types of exit at a boundary (A) continuously, (B) by a jump or (C) not at all (while preserving self-similarity of the process). In our context (two boundaries, self-similarity being meaningless on an interval, but re-entry being allowed), Lambert studies (C,C) exits from $(0,\thresholda)$, no entry needed, Pistorius studies the (A,B) exit (A,A) entrance, and we find (A,B) exit (B,A) entrance. Spectral positivity disallows B exit at 0 and B entrance from $\thresholda$, but this leaves a number of other possibilities, in principle, which we do not 
pursue further.

Scale functions $W^{(q)}$ with Laplace transform $\int_0^\infty e^{-\eta x}W^{(q)}(x)dx=1/(\psi(\eta)-q)$ and $Z^{(q)}(x)=1+q\int_0^xW^{(q)}(z)dz$ are well-known
functions in the fluctuation theory of spectrally one-sided L\'evy processes, see e.g.\ \cite{Pis03}.  

\begin{proposition}\label{prop12} The Laplace exponent of the inverse local time $\sigma^\thresholda$ of $X^\thresholda$ at 0 is given by \begin{eqnarray*}\Theta^\thresholda(q)\!\!\!&=&\!\!\!-\log(\bE(\exp(-q\sigma_1^\thresholda)))=\int_0^\thresholda\left(1-\frac{Z^{(q)}(\thresholda-x)}{Z^{(q)}(\thresholda)}\right)\chi(dx)+\left(1-\frac{1}{Z^{(q)}(\thresholda)}\right)\chi((\thresholda,\infty))\\
   &=&\!\!\!\frac{1}{Z^{(q)}(\thresholda)}\int_0^\thresholda\overline{\overline{\Pi}}(\thresholda-z)qW^{(q)}(z)dz,
\end{eqnarray*}
where $\overline{\overline{\Pi}}(x)=\int_x^\infty\overline{\Pi}(z)dz$. Furthermore, $\sigma^\thresholda$ has moments of all orders.   
\end{proposition}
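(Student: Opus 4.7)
The plan is to employ excursion theory of $X^\thresholda$ at $0$: identify each excursion of $X^\thresholda$ away from $0$ with the $R^{[0,\thresholda]}$-time-change of an excursion of $X^+$ away from $0$, and read off $\Theta^\thresholda$ from the L\'evy--It\^o decomposition of the subordinator $\sigma^\thresholda$. Since excursions of $X^+$ start with a jump whose target has intensity $\chi(dy)=\overline{\Pi}(y)\,dy$ by \eqref{amicale}, and their subsequent trajectories are those of $X$ started from $y$ run until the continuous first passage time $\tau_0$ of $0$, we get
$$\Theta^\thresholda(q)=\int_0^\infty\bigl(1-\bE_y[e^{-qR^{[0,\thresholda]}(\tau_0)}]\bigr)\overline{\Pi}(y)\,dy.$$
For $y>\thresholda$, spectral positivity forces $X$ to reach $\thresholda$ continuously from above before $R^{[0,\thresholda]}$ begins to accumulate, and then the strong Markov property at that hitting time collapses the integrand to its value at $y=\thresholda$. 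This yields the two-term representation in the statement, modulo the identification of $g(y):=\bE_y[e^{-qR^{[0,\thresholda]}(\tau_0)}]$ on $[0,\thresholda]$.

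The central step is to prove $g(y)=Z^{(q)}(\thresholda-y)/Z^{(q)}(\thresholda)$ for $y\in[0,\thresholda]$. Passing to the spectrally negative dual $\hat Y=\thresholda-X$, we have $\tau_0$ for $X$ equal to the continuous first upcrossing time $\hat\tau_\thresholda^+$ of $\thresholda$ by $\hat Y$, and $R^{[0,\thresholda]}(\tau_0)=\int_0^{\hat\tau_\thresholda^+}1_{\{\hat Y_s\ge 0\}}\,ds$. The key is the Feynman--Kac martingale
$$M_t=\exp\!\left(-q\int_0^t 1_{\{\hat Y_s\ge 0\}}\,ds\right)Z^{(q)}(\hat Y_t),$$
with the standard extension $Z^{(q)}(x)=1$ for $x\le 0$. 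On $\{\hat Y>0\}$ it reduces to the classical scale-function local martingale $e^{-qt}Z^{(q)}(\hat Y_t)$ built from the identity $(\cL_{\hat Y}-q)Z^{(q)}=0$, while on $\{\hat Y\le 0\}$ the exponential potential switches off and $Z^{(q)}$ is flat at $1$, keeping $M$ constant; the one-sided jump structure of $\hat Y$ (downward jumps only, continuous upward crossings of $0$) together with the boundary identity $Z^{(q)}(0)=1$ glues the two regimes into a genuine local martingale. Optional stopping at $\hat\tau_\thresholda^+$ yields $Z^{(q)}(\thresholda-y)=Z^{(q)}(\thresholda)\,g(y)$, as required. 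Substituting back and then applying the defining relation $Z^{(q)}(x)=1+q\int_0^xW^{(q)}(z)\,dz$ to both the $(0,\thresholda)$-integral and the atomic $(\thresholda,\infty)$-term, followed by a Fubini interchange of the order of integration, collapses the expression to the compact form $\frac{q}{Z^{(q)}(\thresholda)}\int_0^\thresholda\overline{\overline{\Pi}}(\thresholda-z)W^{(q)}(z)\,dz$.

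For the moments of $\sigma^\thresholda$: the scale functions $q\mapsto W^{(q)}(x)$ and $q\mapsto Z^{(q)}(x)$ are entire in $q$ for each fixed $x>0$, with $Z^{(0)}(\thresholda)=1$, and the tail integral $\overline{\overline{\Pi}}(\thresholda-z)$ is integrable on $(0,\thresholda)$ thanks to the unbounded-variation integrability condition $\int(x\wedge x^2)\Pi(dx)<\infty$. Hence the closed form for $\Theta^\thresholda$ extends analytically to a real neighborhood of $q=0$, giving $\Theta^\thresholda(q)<\infty$ for some $q<0$. This provides an exponential moment $\bE[\exp(|q|\sigma_1^\thresholda)]<\infty$, whence polynomial moments of all orders. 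The principal obstacle is the rigorous verification of the martingale property of $M_t$ at the interface $\hat Y=0$: this requires It\^o's formula for purely discontinuous unbounded-variation processes with careful bookkeeping of the jump compensators at crossings, using the matching $Z^{(q)}(0)=1$ to show that compensator contributions from crossings of $0$ vanish. An alternative, avoiding this subtlety altogether, is to invoke the established Laplace-transform formulas for occupation times of spectrally one-sided L\'evy processes (such as in \cite{Pis03} or subsequent literature), which yield the identity for $g$ directly.
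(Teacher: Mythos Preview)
Your overall strategy coincides with the paper's: excursion theory at $0$, the entrance law $\chi(dx)=\overline{\Pi}(x)\,dx$, the collapse for $y>\thresholda$ via the continuous downcrossing at $\thresholda$, the Fubini computation to reach the compact form, and analyticity of $q\mapsto W^{(q)},Z^{(q)}$ near $q=0$ for the moments. The one substantive difference is the identification of $g(y)=\bE_y[e^{-qR^{[0,\thresholda]}(\tau_0)}]$. The paper does not attempt a Feynman--Kac martingale; instead it observes directly that the restricted process $X^\thresholda$ absorbed at $0$ is precisely $\thresholda$ minus the spectrally negative reflected process $\widehat{X}^+$ absorbed at $\thresholda$, so that $R^{[0,\thresholda]}(\tau_0)$ is literally the absorption time of a reflected one-sided L\'evy process, and then quotes Pistorius \cite[Proposition 2]{Pis04} for $\bE_x[e^{-qT_0(X^\thresholda)}]=Z^{(q)}(\thresholda-x)/Z^{(q)}(\thresholda)$. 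This sidesteps entirely the interface issue you flag as the principal obstacle. Your ``alternative'' at the end is thus exactly the route the paper takes; the martingale construction you sketch is not needed, and its delicate verification at $\{\hat Y=0\}$ would be genuine extra work (the claim that $(\cL_{\hat Y}-q)Z^{(q)}=0$ on $(0,\infty)$ and the gluing at $0$ are correct in spirit but require the careful It\^o-with-jumps bookkeeping you mention). For the moments, the paper proceeds as you do, via analyticity at $q=0$ (phrased as differentiability of a ratio of entire power series with nonvanishing denominator $Z^{(0)}(\thresholda)=1$).
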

\begin{proof} First note that $\overline{\overline{\Pi}}(x)<\infty$ for all $x>0$ since $X$ has finite (zero) mean. After the discussion preceding the proposition, this follows directly from the exponential formula for the Poisson point process of excursions away from zero and the first exit problem of the reflected process $\widehat{X}^+$ from $(0,\thresholda)$. Specifically, $T_{\ge 0}(\omega)=0$ for $n^\thresholda$-a.e. $\omega$, and the post-$T_{\ge 0}$ process under the excursion measure $n^\thresholda$ of $X^\thresholda$ away from $0$ is that of the restricted process aborbed at 0, which is the same as $\thresholda$ minus the reflected process $\widehat{X}^+$ absorbed at $\thresholda$. By \cite[Proposition 2]{Pis04}, 
  $$\bE_x\left(e^{-qT_0(X^\thresholda)}\right)=\frac{Z^{(q)}(\thresholda-x)}{Z^{(q)}(\thresholda)},\qquad x\in(0,\thresholda],$$
For $x>\thresholda$, the post-$T_{\ge 0}$ process under $n^\thresholda$ starts from $\thresholda$. Let $W=W^{(0)}$ and recall e.g.\ from \cite{Ber97}
$$W^{(q)}(x)=\sum_{k\ge 0}q^kW^{*(k+1)}(x),\quad\mbox{where for the convolution }W^{*(k+1)}(x)\le\frac{1}{k!}x^kW^{(0)}(x)^{k+1},$$
so that $q\mapsto W^{(q)}(x)$ is analytic on $\bC$. Hence, we calculate by (\ref{amicale}) and using Fubini's Theorem
\begin{eqnarray*}\Theta^\thresholda(q)\!\!\!&=&\!\!\!-\log(\bE(\exp(-q\sigma_1^\thresholda)))=\int_0^\thresholda\left(1-\frac{Z^{(q)}(\thresholda-x)}{Z^{(q)}(\thresholda)}\right)\chi(dx)+\left(1-\frac{1}{Z^{(q)}(\thresholda)}\right)\chi((\thresholda,\infty))\\
  &=&\!\!\!\frac{1}{Z^{(q)}(\thresholda)}\left(\int_0^\thresholda\int_{\thresholda-x}^\thresholda qW^{(q)}(z)dz\overline{\Pi}(x)dx+\int_0^\thresholda qW^{(q)}(z)dz\int_\thresholda^\infty\overline{\Pi}(x)dx\right)\\
  &=&\!\!\!\frac{1}{Z^{(q)}(\thresholda)}\int_0^\thresholda\overline{\overline{\Pi}}(\thresholda-z)qW^{(q)}(z)dz\\
  &=&\!\!\!\frac{1}{Z^{(q)}(\thresholda)}\int_0^\thresholda\int_{\thresholda-z}^\infty\overline{\Pi}(x)dxq\sum_{k\ge 0}q^kW^{*(k+1)}(z)dz
  =\frac{1}{Z^{(q)}(\thresholda)}\sum_{j\ge 1}q^j\int_0^\thresholda\overline{\overline{\Pi}}(\thresholda-z)W^{*(j)}(z)dz.  
\end{eqnarray*}
Now $\Theta^\thresholda(q)$ is a ratio of complex power series, hence infinitely differentiable within the radius where the (analytic) denominator is non-zero. Since $Z^{(q)}(0)=1$, all moments of $\sigma^\thresholda$ are finite. 
\end{proof}

\begin{corollary} If $X$ is a spectrally positive stable process with Laplace exponent $\psi(\eta)=a\eta^\stablebetawithout$ for some $c>0$ and $\stablebetawithout\in(1,2)$, then 
$$\Theta^\thresholda(q)=\frac{\thresholda^{-\stablebetaminusonespecial}}{E_\stablebetawithout(q\thresholda^\stablebetawithout/a)}\left(E_{\stablebetawithout,1-\stablebetaminusonespecial}(q\thresholda^\stablebetawithout/a)-\frac{1}{\Gamma(1-\stablebetaminusonespecial)}\right),$$
where $E_{\stablebetawithout,\beta}(x)=\sum_{k\ge 0}x^k/\Gamma(\beta+k\stablebetawith)$ is the two-parameter Mittag-Leffler function, a complex-analytic function on all of $\bC$ for all $\beta>0$, and where $E_\stablebetawithout=E_{\stablebetawithout,1}$. 
\end{corollary}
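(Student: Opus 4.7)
The plan is to invoke Proposition \ref{prop12} directly in the stable case, compute $\overline{\overline{\Pi}}$, $W^{(q)}$ and $Z^{(q)}$ in closed form, and then recognise the resulting series as two-parameter Mittag-Leffler functions. First, from the gamma-function identity $\int_0^\infty(e^{-\eta x}-1+\eta x)x^{-2-\alpha}dx = \Gamma(-1-\alpha)\eta^{1+\alpha}$ one reads off $\Pi(dx) = \frac{a\alpha(1+\alpha)}{\Gamma(1-\alpha)}x^{-2-\alpha}dx$, and integrating twice yields $\overline{\overline{\Pi}}(x) = \frac{a}{\Gamma(1-\alpha)}x^{-\alpha}$. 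For the $q$-scale function, I would expand $1/(a\eta^{1+\alpha}-q)$ as a geometric series in $q/(a\eta^{1+\alpha})$ (for $a\eta^{1+\alpha}>q$), invert term by term via $\eta^{-(k+1)(1+\alpha)} \leftrightarrow x^{(k+1)(1+\alpha)-1}/\Gamma((k+1)(1+\alpha))$, and recognise the sum as $W^{(q)}(x) = \frac{x^\alpha}{a}E_{1+\alpha,1+\alpha}(qx^{1+\alpha}/a)$. Integrating via $Z^{(q)}(x)=1+q\int_0^xW^{(q)}$ and performing an index shift gives $Z^{(q)}(x) = E_{1+\alpha}(qx^{1+\alpha}/a)$.

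The complex analyticity of $E_{1+\alpha,\beta}$ on all of $\bC$ is immediate: Stirling gives $\Gamma(\beta+k(1+\alpha))^{-1/k}\to 0$, so the defining series has infinite radius of convergence.

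For the main identity, I would substitute into $\Theta^b(q) = Z^{(q)}(b)^{-1}\int_0^b\overline{\overline{\Pi}}(b-z)qW^{(q)}(z)dz$ and rescale $z=bs$, writing $u := qb^{1+\alpha}/a$; up to constants, the integrand becomes $(1-s)^{-\alpha}s^\alpha E_{1+\alpha,1+\alpha}(us^{1+\alpha})$. Expanding the Mittag-Leffler function and exchanging sum with integral produces Beta integrals
\[\int_0^1 (1-s)^{-\alpha}s^{\alpha+k(1+\alpha)}ds = \frac{\Gamma(1-\alpha)\Gamma((k+1)(1+\alpha))}{\Gamma(2+k(1+\alpha))},\]
and the key arithmetic identity $\alpha+1+k(1+\alpha)=(k+1)(1+\alpha)$ causes the factor $\Gamma((k+1)(1+\alpha))$ to cancel against the denominator inherited from $E_{1+\alpha,1+\alpha}$.

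Re-indexing the residual series $\sum_{k\ge 0}u^k/\Gamma(2+k(1+\alpha))$ by $j=k+1$ and using $2+k(1+\alpha) = 1-\alpha+j(1+\alpha)$ lets me recognise it as $u^{-1}(E_{1+\alpha,1-\alpha}(u)-1/\Gamma(1-\alpha))$, and dividing by $Z^{(q)}(b) = E_{1+\alpha}(u)$ yields the stated formula after tracking the remaining prefactors. The core of the calculation is purely combinatorial bookkeeping with Mittag-Leffler indices, and the main obstacle is spotting the collapse $\alpha+1+k(1+\alpha)=(k+1)(1+\alpha)$ that triggers the cancellation. A useful sanity check is the large-$b$ limit, where $\Theta^b(q)$ should reduce to the Laplace exponent of the inverse local time at $0$ of the one-sided restriction $X^+$, independently computable via $\Theta^+(q) = \int_0^\infty(1-e^{-x\Phi(q)})\overline{\Pi}(x)dx$ with $\Phi(q) = (q/a)^{1/(1+\alpha)}$.
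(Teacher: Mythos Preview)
Your approach is correct and takes a genuinely different route from the paper's. Both start from Proposition~\ref{prop12}, but the paper uses the \emph{first} expression there---the one with $\chi(dx)=\overline{\Pi}(x)dx$ and differences $Z^{(q)}(b)-Z^{(q)}(b-x)$---citing $Z^{(q)}(b)=E_{1+\alpha}(qb^{1+\alpha})$ from the literature, expanding this difference as a power series, and evaluating the integrals $\int_0^b\bigl(b^{k(1+\alpha)}-(b-x)^{k(1+\alpha)}\bigr)x^{-\alpha-1}dx$ term by term. Each such integral splits into two pieces, and one family of pieces cancels exactly against the boundary term $(Z^{(q)}(b)-1)\chi((b,\infty))$; the remaining family assembles into $E_{1+\alpha,1-\alpha}$. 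You instead use the \emph{second} (convolution) expression from Proposition~\ref{prop12}, compute $W^{(q)}$ explicitly as $a^{-1}x^{\alpha}E_{1+\alpha,1+\alpha}(qx^{1+\alpha}/a)$, and land immediately on a standard Beta integral after rescaling; the identity $\alpha+1+k(1+\alpha)=(k+1)(1+\alpha)$ makes the $\Gamma((k+1)(1+\alpha))$ factors cancel in one step, with no separate boundary term to track. Your route is more streamlined---the Beta integral is direct and there is a single cancellation rather than a two-stage one---at the modest price of first deriving $W^{(q)}$ explicitly, which the paper avoids. Your sanity check via the large-$b$ limit against the one-sided process is a nice addition not present in the paper.
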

\begin{proof} Since varying $a$ corresponds to a linear time change of $X$, we can assume $a=1$ w.l.o.g. By \cite{Ber96b}, $Z^{(q)}(\thresholda)=E_\stablebetawithout(q\thresholda^\stablebetawithout)$. Also
  $$\overline{\Pi}(\thresholda)=\int_\thresholda^\infty\frac{\stablebetawith\stablebetaminusonewith}{\Gamma(1-\stablebetaminusonespecial)}x^{-\stablebetaminusonespecial-2}dx=\frac{\stablebetaminusonewithout}{\Gamma(1-\stablebetaminusonespecial)}\thresholda^{-\stablebetaminusonespecial-1}.$$
With $\chi(dx)=\overline{\Pi}(x)dx$,  from \eqref{amicale},
\begin{eqnarray}\Theta^\thresholda(q)\!\!\!&=&\!\!\!\frac{1}{Z^{(q)}(\thresholda)}\left(\int_0^\thresholda(Z^{(q)}(\thresholda)-Z^{(q)}(\thresholda-x))\chi(dx)+(Z^{(q)}(\thresholda)-1)\chi((\thresholda,\infty))\right)\nonumber\\
                          &=&\!\!\!\frac{\stablebetaminusonewith\int_0^\thresholda(E_\stablebetawithout(q\thresholda^\stablebetawithout)-E_\stablebetawithout(q(\thresholda-x)^\stablebetawithout))x^{-\stablebetaminusonespecial-1}dx+\thresholda^{-\stablebetaminusonespecial}(E_\stablebetawithout(q\thresholda^\stablebetawithout)-1)}{Z^{(q)}(\thresholda)\Gamma(1-\stablebetaminusonespecial)}.\label{ratio}
                          \end{eqnarray}
We use power series techniques to calculate the first term of the numerator:
\begin{align*}&\int_0^\thresholda(E_\stablebetawithout(q\thresholda^\stablebetawithout)-E_\stablebetawithout(q(\thresholda-x)^\stablebetawithout))x^{-\stablebetaminusonespecial-1}dx\\
       &=\sum_{k=1}^\infty\frac{q^k}{\Gamma(1+k\stablebetawith)}\int_0^\thresholda(\thresholda^{k\stablebetawith}-(\thresholda-x)^{k\stablebetawith})x^{-\stablebetaminusonespecial-1}dx.
\end{align*}
We now solve this integral using Fubini theorem and Beta integrals to find finite coefficients
\begin{align*}&\frac{1}{\Gamma(1+k\stablebetawith)}\int_0^\thresholda\left(\thresholda^{k\stablebetawith}-(\thresholda-x)^{k\stablebetawith}\right)x^{-\stablebetaminusonespecial-1}dx\\
   &=\frac{\thresholda^{k\stablebetawith-\stablebetaminusonespecial}}{\Gamma(1+k\stablebetawith)}\frac{1}{\stablebetaminusonewithout}\left(\frac{\Gamma(1-\stablebetaminusonespecial)\Gamma(k\stablebetawith+1)}{\Gamma(k\stablebetawith+1-\stablebetaminusonespecial)}-1\right).
\end{align*}
The first term gives coefficients of the two-parameter Mittag-Leffler function. The second term gives coefficients just as needed to cancel with most of the second term in the numerator of (\ref{ratio}), we obtain the formula we claimed for $c=1$. To pass to general $c>0$, it is easy to check that $Z^{(q)}(\thresholda)=E_\stablebetawithout(q\thresholda^\stablebetawithout/c)$, so we can just replace $q$ by $q/c$. 
\end{proof}

\subsection{Uniform local time estimates up to random times}

\begin{proposition}\label{prop14} Consider a spectrally positive stable L\'evy process $X$ with Laplace exponent $\eta^{\stablebetawithout }$. There is $K>0$ such that for all $x,y\in\bR$ and all $N,r\in(0,\infty)$ we have
  $$\bP\left(\sup_{0\le t\le N}\left|\ell^y(t)-\ell^x(t)\right|>r\right)\le 2e^N\exp\left(-Kr|y-x|^{-\stablebetaminusonewith /2}\right).$$
  Hence for all $p>0$, we also have 
  $\displaystyle\bE\left(\sup_{0\le t\le N}\left|\ell^y(t)-\ell^x(t)\right|^p\right)\le 2\Gamma(p+1)K^{-p}e^N|b-a|^{p\stablebetaminusonewith /2}$.
\end{proposition}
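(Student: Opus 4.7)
The plan is to randomise the horizon by an independent exponential time, reduce to the restricted L\'evy process $X^b$ on a spatial interval, extract a Gaussian concentration inequality for a L\'evy process arising via excursion theory, and convert this to the exponential tail by saddle-point integration against the random horizon.

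First, let $\lambda\sim\mathrm{Exp}(1)$ be independent of $X$. Since $\{\sup_{0\le t\le N}|\ell^y(t)-\ell^x(t)|>r\}$ is $\mathcal{F}_N$-measurable and $\bP(\lambda\ge N)=e^{-N}$,
\[
  \bP\Bigl(\sup_{0\le t\le N}|\ell^y(t)-\ell^x(t)|>r\Bigr)\le e^N\,\bP\Bigl(\sup_{0\le t\le\lambda}|\ell^y(t)-\ell^x(t)|>r\Bigr),
\]
so it suffices to bound the right-hand probability by $2\exp(-Kr|y-x|^{-\alpha/2})$ uniformly in the parameters. By spatial homogeneity assume $x=0<y$ and set $b=2y$. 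Since local times of $X$ and of the restricted process $X^b$ of Section \ref{sectrestricted} at interior levels of $[0,b]$ agree up to the time change $R^{[0,b]}(t)\le t$, the problem reduces to controlling $\sup_{0\le s\le\lambda}|\ell^y_{X^b}(s)-\ell^0_{X^b}(s)|$.

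Next, excursion theory at level $0$ for $X^b$ gives that $u\mapsto\ell^y_{X^b}(\sigma^b(u))$ is a subordinator with Laplace exponent $\phi^b(q)=n^b(1-\exp(-q\ell^y(T_0)))$, where $\sigma^b$ is the inverse local time of $X^b$ at $0$ and $n^b$ is its excursion measure. Subtracting the drift $u=\ell^0_{X^b}(\sigma^b(u))$ yields a L\'evy process $M_u$; monotonicity of $\ell^y_{X^b}$ on each excursion interval of $0$ gives the equality $\sup_{0\le s\le\lambda}|\ell^y_{X^b}-\ell^0_{X^b}|=\sup_{0\le u\le L}|M_u|$ with $L:=\ell^0_{X^b}(\lambda)$, which, by the argument preceding Lemma \ref{lm11} applied to the restricted process, is exponentially distributed with rate $\Theta^b(1)$. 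Proposition \ref{prop12} together with Vigon's \'equation amicale \eqref{amicale} provide analyticity of $\phi^b$ on a neighbourhood of $0$, and a Taylor expansion combined with the spatial scaling of local times gives a sub-Gaussian estimate $\kappa(\theta):=\log\bE[\exp(\theta M_1)]\le C y^\alpha\theta^2$ on an interval $\theta\in[-\theta_0,\theta_0]$ with $\theta_0\ge K' y^{-\alpha/2}$. Doob's maximal inequality applied to the exponential martingale $\exp(\theta M_u-u\kappa(\theta))$, followed by optimising $\theta$, yields
\[
  \bP\Bigl(\sup_{0\le u\le S}|M_u|>r\Bigr)\le 2\exp\!\left(-\frac{r^2}{4C S y^\alpha}\right)
\]
in the relevant $r$-range.

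Finally, integrating this Gaussian tail against the exponential density of $L$, namely $\int_0^\infty \Theta^b(1)e^{-\Theta^b(1)S}\exp(-r^2/(4CSy^\alpha))\,dS$, a saddle-point evaluation at $S^*\propto r\,y^{-\alpha/2}$ yields an upper bound $\lesssim\exp(-Kr\,y^{-\alpha/2})$; combining with the first step gives the claimed probability estimate. The $L^p$ moment bound follows from $\bE[W^p]=\int_0^\infty p\,r^{p-1}\bP(W>r)\,dr$ and $\int_0^\infty p\,r^{p-1}e^{-Kr|y-x|^{-\alpha/2}}\,dr=\Gamma(p+1)\,K^{-p}\,|y-x|^{p\alpha/2}$, multiplied by the Step-1 factor $2e^N$. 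The main obstacle is the sub-Gaussian bound on $\kappa$: finiteness of $\phi^b{}'(0)$ is exactly what the restriction to $[0,b]$ delivers via Proposition \ref{prop12} (the unrestricted inverse local time of $X$ has infinite mean), while the quadratic coefficient, whose $y^\alpha$ scaling is essential to obtain the correct exponent $-\alpha/2$ of $|y-x|$, must be extracted from the excursion measure $n^b$ by combining the scale-function formulas of Proposition \ref{prop12} with \eqref{amicale} and the spatial scaling of $X$.
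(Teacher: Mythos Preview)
Your approach is unnecessarily elaborate and has a genuine gap at its core. The paper's proof is essentially two lines: it invokes Blumenthal and Getoor's general result \cite[Proposition V.(3.28)]{BlG68}, valid for any standard Markov process, which gives
\[
  \bP\left(\sup_{0\le t\le N}\left|\ell^y(t)-\ell^x(t)\right|>2\delta\right)\le 2e^N\exp\bigl(-\delta/\varphi(|y-x|)\bigr),
\]
where $(\varphi(z))^2=1-\bE_0(e^{-T_z})\bE_z(e^{-T_0})$, and then uses the explicit hitting-time Laplace transforms of Lemma~\ref{lm7} to show $\varphi(z)\le z^{\alpha/2}/2K$ (this is Lemma~\ref{lm15}). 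No restricted process, no excursion decomposition, no saddle-point integration is needed; the $e^N$ factor you produce via the exponential randomisation is already built into the Blumenthal--Getoor inequality. The moment bound is then a single gamma integral.

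The gap in your argument is precisely the step you flag as ``the main obstacle'': the sub-Gaussian estimate $\kappa(\theta)\le C y^\alpha\theta^2$ valid for $|\theta|\le K'y^{-\alpha/2}$. You assert this follows from analyticity of $\phi^b$ (Proposition~\ref{prop12}) together with ``spatial scaling'', but nothing you have written establishes either the quadratic coefficient's $y^\alpha$ behaviour or, more critically, that the radius of validity grows like $y^{-\alpha/2}$ as $y\downarrow 0$. Proposition~\ref{prop12} gives analyticity of $\Theta^b$ for \emph{fixed} $b$, with no uniformity in $b=2y$; extracting the uniform scaling you need would amount to redoing, inside the excursion measure $n^b$, a calculation at least as delicate as Lemma~\ref{lm15} itself. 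Your route also inverts the logical order of the paper: the restricted-process machinery is developed \emph{after} Proposition~\ref{prop14}, and the analogue for $X^b$ (Proposition~\ref{prop18}) is obtained by re-running the Blumenthal--Getoor argument with $\varphi_b(x,y)\le\varphi(|y-x|)$, not the other way around.
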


\begin{lemma}\label{lm15} For $z>0$, let $\varphi(z)\in(0,1)$ be such that we have $(\varphi(z))^2=1-\Psi_0(z)\Psi_z(0)$, where $\Psi_x(y)=\bE_x(\exp(-T_y(X)))$ and  $T_y(X)=\inf\{t\ge 0\colon X_t=y\}$. Then there is $K>0$ such that $2\varphi(z)\le z^{\stablebetaminusonewith /2}/K$ for all $z>0$. 
\end{lemma}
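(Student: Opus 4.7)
The plan is to show that $1-\Psi_0(z)\Psi_z(0)\le Cz^{\alpha}$ uniformly for $z>0$, from which the lemma follows immediately by setting $K=1/(2\sqrt{C})$.  By spatial homogeneity of $X$, $\Psi_z(0)=\Psi_0(-z)$, and the first identity of Lemma \ref{lm7} gives $\Psi_0(-z)=\bE(e^{-T_{-z}})=e^{-z}$.  The second identity of Lemma \ref{lm7} supplies the integral representation
\[\Psi_0(z)=\frac{(1+\alpha)\sin(\pi\alpha)}{\pi}\int_0^{\infty}\frac{s^{1+\alpha}}{s^{2(1+\alpha)}+2s^{1+\alpha}\cos(\pi\alpha)+1}e^{-zs}\,ds.\]
I first decompose $1-\Psi_0(z)\Psi_z(0)=(1-\Psi_0(z))+\Psi_0(z)(1-e^{-z})$ and treat each term separately.

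For the first term, I use the Mellin-type identity $\Psi_0(0)=1$ (verifiable directly from the integral via the standard formula $\int_0^\infty u^{a-1}/(u^2+2u\cos\theta+1)\,du=\pi\sin((1-a)\theta)/(\sin(\pi a)\sin\theta)$, or, probabilistically, from $T_z\to 0$ in probability as $z\to 0$) to rewrite $1-\Psi_0(z)$ as the same integral with $e^{-zs}$ replaced by $1-e^{-zs}$.  The substitution $u=zs$ pulls a factor $z^{\alpha}$ out and leaves
\[1-\Psi_0(z)=\frac{(1+\alpha)\sin(\pi\alpha)}{\pi}\,z^{\alpha}\int_0^{\infty}\frac{u^{1+\alpha}(1-e^{-u})}{u^{2(1+\alpha)}+2u^{1+\alpha}z^{1+\alpha}\cos(\pi\alpha)+z^{2(1+\alpha)}}\,du.\]
The task is then to bound the remaining integral by a constant uniformly for $z\in(0,1]$.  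The key elementary bound is
\[a^{2}+2ab\cos(\pi\alpha)+b^{2}=(a+b\cos(\pi\alpha))^{2}+b^{2}\sin^{2}(\pi\alpha)\ge\sin^{2}(\pi\alpha)\max(a,b)^{2},\]
applied with $a=u^{1+\alpha}$ and $b=z^{1+\alpha}$.  I split the integral at $u=z$ and $u=1$: on $(0,z)$ use the $z^{2(1+\alpha)}$ lower bound and $1-e^{-u}\le u$ to get an integrand controlled by $u^{2+\alpha}/z^{2(1+\alpha)}$, whose integral is $O(z^{1-\alpha})\le O(1)$; on $(z,1)$ use the $u^{2(1+\alpha)}$ lower bound and $1-e^{-u}\le u$, leaving a $u^{-\alpha}$ integrand; on $(1,\infty)$ use the $u^{2(1+\alpha)}$ lower bound and $1-e^{-u}\le 1$, leaving a $u^{-(1+\alpha)}$ integrand.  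Each piece yields a finite $\alpha$-dependent constant.  For $z\ge 1$ the trivial bound $1-\Psi_0(z)\le 1\le z^{\alpha}$ suffices.

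For the second term, $\Psi_0(z)\le 1$ and $1-e^{-z}\le\min(1,z)\le z^{\alpha}$ (the last inequality because $z\le z^{\alpha}$ on $(0,1]$ and $1\le z^{\alpha}$ on $[1,\infty)$ when $\alpha\in(0,1)$).  Combining yields $1-\Psi_0(z)\Psi_z(0)\le(C'+1)z^{\alpha}$, hence $\varphi(z)\le\sqrt{C'+1}\,z^{\alpha/2}$ and the lemma follows.

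The main obstacle is the case $\alpha>1/2$, where $\cos(\pi\alpha)<0$ so the denominator can be small and the naive bound ``denominator $\ge u^{2(1+\alpha)}$'' fails.  This is precisely where the $\sin^{2}(\pi\alpha)\max(a,b)^{2}$ estimate and the three-way split at $u=z$ and $u=1$ are needed; elsewhere the proof is a routine computation.
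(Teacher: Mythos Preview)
Your proof is correct and follows essentially the same route as the paper: both use the integral representation from Lemma~\ref{lm7}, the normalisation $\Psi_0(0)=1$, the completing-the-square lower bound $a^{2}+2ab\cos(\pi\alpha)+b^{2}\ge\sin^{2}(\pi\alpha)\max(a,b)^{2}$ on the denominator, and a split of the integral according to the size of the variable relative to $z$. The only cosmetic differences are that the paper keeps the factor $e^{-z}$ inside the integral (writing the integrand with $1-e^{-(s+1)z}$) rather than decomposing $1-\Psi_0(z)\Psi_z(0)$ as you do, and it works in the $s$-variable with a single split at $s=z^{-1}$ instead of substituting $u=zs$ and splitting at $u=z$ and $u=1$.
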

\begin{proof} We may assume that $z\leq 1$ since the bound is trivially true for $z\geq 1$ provided $K\leq 1$.  Note that $\lim_{y\to 0} T_y(X) =0$ almost surely and, consequently, Lemma \ref{lm7} and the dominated convergence theorem combine to imply that for $\lambda\sim{\rm Exp}(1)$
\[ 1 = \lim_{y\downarrow 0} \bP(T_y(X) <\lambda) = \frac{\stablebetawithout }{\pi}\int_0^\infty\frac{\sin(\pi \stablebetaminusonewith ) s^{\stablebetawithout } }{s^{2\stablebetawith }+2s^{\stablebetawithout } \cos(\pi\stablebetaminusonewith )+1}ds.\]
Consequently,
\[  \begin{split}(\varphi(z))^2=1-\Psi_0(z)\Psi_z(0) = &\ 1 - \exp(-z)\frac{\stablebetawithout }{\pi}\int_0^\infty\frac{\sin(\pi \stablebetaminusonewith ) s^{\stablebetawithout } }{s^{2\stablebetawith }+2s^{\stablebetawithout } \cos(\pi\stablebetaminusonewith )+1} e^{-zs}ds \\
 = &\  \frac{\stablebetawithout }{\pi}\int_0^\infty\frac{\sin(\pi \stablebetaminusonewith ) s^{\stablebetawithout } }{s^{2\stablebetawith }+2s^{\stablebetawithout } \cos(\pi\stablebetaminusonewith )+1}(1- e^{-(s+1)z})ds \\
 \le & \ \frac{\stablebetawith \sin(\pi\stablebetaminusonewith )}{\pi} z \int_0^{z^{-1}}\frac{ s^{\stablebetawithout } }{s^{2\stablebetawith }+2s^{\stablebetawithout } \cos(\pi\stablebetaminusonewith )+1}(1+s)ds\\
 & \quad+ \frac{\stablebetawith \sin(\pi\stablebetaminusonewith )}{\pi} \int_{z^{-1}}^\infty\frac{ s^{\stablebetawithout } }{s^{2\stablebetawith }+2s^{\stablebetawithout } \cos(\pi\stablebetaminusonewith )+1}ds. 
    \end{split}\]
  Splitting $s^{2\stablebetawith}=s^{2\stablebetawith}\sin^2(\pi\stablebetaminusonewith)+s^{2\stablebetawith}\cos^2(\pi\stablebetaminusonewith)$, we can further bound 
  above by
  \begin{align*}
  &z+\frac{\stablebetawithout }{\pi \sin(\pi\stablebetaminusonewith )} z \int_0^{z^{-1}} s^{-\stablebetaminusonewith } ds+ \frac{\stablebetawith }{\pi \sin(\pi\stablebetaminusonewith )}\int_{z^{-1}}^\infty s^{-1-\stablebetaminusonespecial }ds\\
   &= z+\frac{\stablebetawithout }{(1-\stablebetaminusonespecial ) \pi \sin(\pi\stablebetaminusonewith )} z^{\stablebetaminusonewithout } + \frac{\stablebetawith }{\stablebetaminusonewith  \pi \sin(\pi\stablebetaminusonewith )} z^{\stablebetaminusonewithout } \\
   &\le z^\stablebetaminusonewithout  \left(1+\frac{\stablebetawithout }{(1-\stablebetaminusonespecial ) \pi \sin(\pi\stablebetaminusonewith )}+  \frac{\stablebetawith }{\stablebetaminusonewith  \pi \sin(\pi\stablebetaminusonewith )} \right),
  \end{align*}
and the result follows.
\end{proof}

\begin{proof}[Proof of Proposition \ref{prop14}] By \cite[Proposition V.(3.28)]{BlG68}, we have for all $N,\delta\in(0,\infty)$ and $x,y\in\bR$
  $$\bP\left(\sup_{0\le t\le N}\left|\ell^y(t)-\ell^x(t)\right|>2\delta\right)\le 2e^N\exp\left(-\delta/\varphi(|y-x|)\right),$$
  and setting $r=2\delta$, $z=|y-x|$ and with $K$ as in the preceding lemma, we conclude for the first inequality. The second is an elementary integration of the first. 
\end{proof}
  
In fact, the statement on moments, when expressed in terms of $\varphi$, also holds in the full generality of the Blumenthal-Getoor result, any ``standard Markov process''. Since the tail estimate is also independent of the starting value of $X$, by the strong Markov property at $T_x\wedge T_y$, it turns out that it is easy to improve the dependence on $N$ in the moment bounds, in the general case. In particular, we
obtain the following.   
  
\begin{corollary}\label{cor16} There is $K>0$ such that for all $p>0$, $x,y\in\bR$ and $N>0$
  $$\bE\left(\sup_{0\le t\le N}\left|\ell^y(t)-\ell^x(t)\right|^p\right)\le 2\Gamma(p+1)K^{-p}(N+1)^{p+1}e|y-x|^{p\stablebetaminusonewith /2}.$$
\end{corollary}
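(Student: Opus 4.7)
The plan is to exploit the observation the authors make immediately before stating the corollary: the tail estimate in Proposition \ref{prop14} depends only on $|y-x|$ and not on the starting value of the process. This allows a chaining argument over unit time intervals, replacing the crude $e^N$ bound by a polynomial factor.

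First, I would fix $n = \lceil N\rceil$ and define, for $k = 0, 1, \ldots, n-1$,
\[
\Delta_k := \sup_{k\le t\le k+1}\left|\bigl(\ell^y(t)-\ell^y(k)\bigr)-\bigl(\ell^x(t)-\ell^x(k)\bigr)\right|.
\]
By the Markov property of $X$ at time $k$ and spatial homogeneity, conditionally on $\mathcal{F}_k$ with $X_k=z$, the increments $\ell^y(k+\cdot)-\ell^y(k)$ and $\ell^x(k+\cdot)-\ell^x(k)$ are distributed as the local times at levels $y-z$ and $x-z$ of a L\'evy process started at $0$. Since the bound in Proposition \ref{prop14} depends on $y$ and $x$ only through $|y-x|$, applying it with $N=1$ yields
\[
\bP(\Delta_k > s\mid \mathcal{F}_k)\le 2e\exp\bigl(-Ks|y-x|^{-\alpha/2}\bigr),\qquad k=0,\ldots,n-1.
\]

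A telescoping argument (and the triangle inequality) gives $\sup_{0\le t\le N}|\ell^y(t)-\ell^x(t)|\le \sum_{k=0}^{n-1}\Delta_k$, hence by a union bound
\[
\bP\!\left(\sup_{0\le t\le N}|\ell^y(t)-\ell^x(t)|>r\right)\le n\cdot 2e\exp\!\left(-\frac{Kr}{n}|y-x|^{-\alpha/2}\right).
\]
Integrating the tail via $\bE(Z^p) = \int_0^\infty pr^{p-1}\bP(Z>r)\,dr$ and the substitution $u = Kr|y-x|^{-\alpha/2}/n$ produces a $\Gamma(p+1)$ factor together with $n^{p+1}$, giving
\[
\bE\left(\sup_{0\le t\le N}|\ell^y(t)-\ell^x(t)|^p\right)\le 2e\,\Gamma(p+1)K^{-p}n^{p+1}|y-x|^{p\alpha/2},
\]
and the desired bound follows from $n \le N+1$.

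There is no substantial obstacle. The only technical point worth verifying carefully is the Markov step: one must check that the joint law of the increments $(\ell^y(k+\cdot)-\ell^y(k),\,\ell^x(k+\cdot)-\ell^x(k))$ given $\mathcal{F}_k$ really coincides, after the spatial shift by $X_k$, with that of the local times of $X$ at levels $y-X_k$ and $x-X_k$; this is standard for L\'evy processes but is the place where the dependence on the starting point drops out. Everything else is a routine union-bound plus tail-integral computation, and there is no need to reprove Proposition \ref{prop14} or touch the estimate in Lemma \ref{lm15}.
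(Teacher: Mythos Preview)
Your proposal is correct and follows essentially the same approach as the paper: decompose $[0,N]$ into unit intervals, use the Markov property and additivity of local times to see that each increment $\Delta_k$ obeys the Proposition~\ref{prop14} bound with $N=1$ uniformly in the starting point, and then combine. The only cosmetic difference is that the paper works directly with moments via the elementary inequality $\bigl(\sum_{k}x_k\bigr)^p\le (N+1)^p\sum_k x_k^p$ applied to $\bE(\Delta_k^p)$, whereas you pass through the tail bound $\bP(\sum_k\Delta_k>r)\le n\max_k\bP(\Delta_k>r/n)$ and then integrate; both routes produce the same factor $(N+1)^{p+1}$.
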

\begin{proof} We will use
  $\displaystyle\sup_{0\le t\le N}\left|\ell^y(t)-\ell^x(t)\right|\le\sum_{k=0}^{\lfloor N\rfloor}\sup_{0\le s\le 1}\left|\left(\ell^y(k+s)-\ell^y(k)\right)-\left(\ell^x(k+s)-\ell^x(k)\right)\right|$
  and estimate the sum above, as follows
  \begin{equation}\label{elemineq}\left(\sum_{k=0}^{\lfloor N\rfloor}x_k\right)^p\le\left((N+1)\max_{0\le k\le\lfloor N\rfloor}x_k\right)^p\le(N+1)^p\sum_{k=0}^{\lfloor N\rfloor}x_k^p.
  \end{equation}
  By the Markov property and since local times are additive functionals, all $\lfloor N\rfloor+1$ terms satisfy the same moment bound from Proposition \ref{prop14}, applied for unit time intervals, so we obtain the estimate as claimed.
\end{proof}

\begin{corollary}\label{cor17} For any random time $V$ with moments of all orders and any $p>0$, there is $M_p$ depending on the distribution of $V$ such that for all $x,y\in\bR$, we have 
  $$\bE\left(\sup_{0\le t\le V}\left|\ell^y(t)-\ell^x(t)\right|^p\right)\le M_p|y-x|^{p\stablebetaminusonewith /2}.$$
\end{corollary}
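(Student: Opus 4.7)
The plan is to reduce to the fixed-time estimate of Corollary~\ref{cor16} by splitting the time axis into unit intervals and using Cauchy--Schwarz to decouple the (possibly $X$-dependent) random endpoint $V$ from the spatial fluctuation bound.

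First, write $S_N := \sup_{0\le t\le N}|\ell^y(t)-\ell^x(t)|$. Since $N\mapsto S_N$ is non-decreasing, the left-hand side equals $\bE(S_V^p)$, and partitioning on the events $\{V\in[n-1,n)\}$ for $n\ge 1$ yields
\[
\bE(S_V^p) \le \sum_{n=1}^\infty \bE\bigl(S_n^p\,\mathbf{1}_{\{V\ge n-1\}}\bigr).
\]
I would then apply Cauchy--Schwarz termwise to obtain
\[
\bE(S_V^p) \le \sum_{n=1}^\infty \bigl(\bE S_n^{2p}\bigr)^{1/2}\,\bP(V\ge n-1)^{1/2},
\]
and control the two factors separately: the first by Corollary~\ref{cor16} applied with exponent $2p$, producing a factor polynomial in $n$ times $|y-x|^{p\alpha/2}$; the second by Markov's inequality $\bP(V\ge n-1)\le \bE(V^q)/(n-1)^q$ for $n\ge 2$, applied at a high enough moment of $V$, all of which are finite by hypothesis.

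The last step is simply to choose $q$ large enough relative to $p$ so that the resulting series over $n$ converges; this produces the constant $M_p$, which depends only on $p$ and on finitely many moments of $V$. No step is particularly delicate; the one feature worth flagging is that $V$ is not assumed independent of $X$, and Cauchy--Schwarz is precisely what sidesteps this, converting the joint expectation into a product of an intrinsic fluctuation moment for $S_n$ (handled by Corollary~\ref{cor16}) and a pure tail estimate for $V$ (handled by Markov). The only reason the argument could fail would be if $V$ had only finitely many moments, but that is ruled out by the hypothesis.
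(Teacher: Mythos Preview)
Your proposal is correct and is essentially the same argument as the paper's: both partition on unit-length intervals for $V$, apply Cauchy--Schwarz termwise to decouple the random endpoint from the local-time fluctuation, invoke Corollary~\ref{cor16} with exponent $2p$ to get a factor polynomial in $n$ times $|y-x|^{p\alpha/2}$, and then use a high enough moment of $V$ (the paper takes $p'>2p+3$, matching your convergence requirement) to make the series over $n$ summable.
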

\begin{proof} We use the Cauchy-Schwarz inequality to obtain
  \begin{eqnarray*}\bE\left(\sup_{0\le t\le V}\left|\ell^y(t)-\ell^x(t)\right|^p\right)\!\!\!&\le&\!\!\!\sum_{k\ge 2}\bE\left(1_{\{k-2\le V<k-1\}}\sup_{0\le t\le k-1}\left|\ell^y(t)-\ell^x(t)\right|^p\right)\\
     &\le&\!\!\!\sum_{k\ge 2}\Big(\bP(k\!-\!2\!\le\! V\!<\!k\!-\!1)\Big)^{1/2}\left(\bE\left(\sup_{0\le t\le k-1}\left|\ell^y(t)-\ell^x(t)\right|^{2p}\right)\right)^{1/2}\\
     &\le&\!\!\!\left(\sum_{k\ge 2}k^{p+1/2}(\bP(V\ge k-2))^{1/2}\right)\sqrt{2e\Gamma(2p+1)}K^{-p}|y-x|^{p\stablebetaminusonewith /2}.
  \end{eqnarray*}
  Since finite $p^\prime$th moment of $V$ implies that there is $k_0(p^\prime)\ge 1$ such that $\bP(V\ge k-2)\le k^{-p^\prime}$ for all $k\ge k_0(p^\prime)$, we can choose any $p^\prime>2p+3$ to see that the series in our expression converges.
\end{proof}

Now consider the spectrally positive stable-$\stablebetawith $ process restricted to an interval $[0,\thresholda]$. Recall that this process is defined as $X^\thresholda_r=X_{\tau^{[0,\thresholda]}(r)}$, where
$$\tau^{[0,\thresholda]}(r)=\inf\left\{t\ge 0\colon R^{[0,\thresholda]}(t)>r\right\},\qquad\mbox{and }R^{[0,\thresholda]}(t)=\int_0^t1_{\{0\le X_s\le \thresholda\}}ds.$$
We can use the fact that $X^\thresholda$ is a Markov process with the same local time differences as $X$ in the interval $[0,\thresholda]$ to improve on this corollary and effectively allow times $V=\tau^{[0,\thresholda]}(Q)$, where only $Q$ is required to have moments of all orders, subject only to restricting $x,y\in[0,\thresholda]$. This is a
genuine improvement because $\bE(\tau^{[0,\thresholda]}(r))=\infty$ for all $r\ge 0$, since there is positive probability that $X$ leaves $(-\infty,\thresholda]$ during $[0,r]$ after which to return inside $[0,\thresholda]$ it takes $X$ a level passage time given by a stable-$(1/\stablebetawith)$ ladder time subordinator.

\begin{proposition}\label{prop18} For any $\thresholda>0$, any random time $Q$ with moments of all orders and any $p>0$, there is $M_{p,\thresholda}$ depending on the distribution of $Q$ such that for all $x,y\in[0,\thresholda]$, we have 
  $$\bE\left(\sup_{0\le t\le\tau^{[0,\thresholda]}(Q)}\left|\ell^y(t)-\ell^x(t)\right|^p\right)\le M_{p,\thresholda}|y-x|^{p \stablebetaminusonewith /2}.$$ 
\end{proposition}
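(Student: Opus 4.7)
My plan is to repeat the strategy of Proposition~\ref{prop14} and Corollaries~\ref{cor16}--\ref{cor17}, but for the restricted process $X^\thresholda$ in place of $X$. The initial observation is that, for $x,y\in[0,\thresholda]$, the local times $\ell^y$ and $\ell^x$ grow only while $X\in[0,\thresholda]$, so $t\mapsto|\ell^y(t)-\ell^x(t)|$ is constant on every interval on which $X\notin[0,\thresholda]$. Setting $L^\thresholda_y(r):=\ell^y(\tau^{[0,\thresholda]}(r))$, which a change of variable in the occupation density formula identifies as the local time of $X^\thresholda$ at $y$, this gives
$$\sup_{0\le t\le\tau^{[0,\thresholda]}(Q)}|\ell^y(t)-\ell^x(t)|=\sup_{0\le r\le Q}|L^\thresholda_y(r)-L^\thresholda_x(r)|,$$
and $L^\thresholda_y(r)$ is jointly continuous in $(y,r)$, since any jumps of $\tau^{[0,\thresholda]}$ occur across intervals on which $\ell^y$ is constant.

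Next I would apply the Blumenthal--Getoor estimate \cite[Proposition V.(3.28)]{BlG68} to the strong Markov process $X^\thresholda$ on $[0,\thresholda]$ to obtain
$$\bP\left(\sup_{0\le r\le N}|L^\thresholda_y(r)-L^\thresholda_x(r)|>2\delta\right)\le 2e^N\exp(-\delta/\varphi^\thresholda(x,y)),$$
with $\varphi^\thresholda(x,y)^2=1-\Psi^\thresholda_x(y)\Psi^\thresholda_y(x)$ and $\Psi^\thresholda_x(y)=\bE_x(\exp(-T_y(X^\thresholda)))$. Because $X^\thresholda$ is $X$ with the intervals $\{t:X_t\notin[0,\thresholda]\}$ excised, first-passage times satisfy $T_y(X^\thresholda)\le T_y(X)$ under $\bP_x$ for $x,y\in[0,\thresholda]$, so $\Psi^\thresholda_x(y)\ge\Psi_x(y)$, and spatial homogeneity of $X$ combined with Lemma~\ref{lm15} then gives
$$\varphi^\thresholda(x,y)\le\varphi(|y-x|)\le|y-x|^{\stablebetaminusonewith/2}/(2K).$$
This is the direct analogue of Proposition~\ref{prop14} for $L^\thresholda_y$.

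From this tail bound, the proofs of Corollary~\ref{cor16} (integrating the tail and applying \eqref{elemineq} across unit subintervals) and of Corollary~\ref{cor17} (Cauchy--Schwarz on the events $\{k-2\le Q<k-1\}$, using the moments of $Q$) go through verbatim for $L^\thresholda_y$ in place of $\ell^y$ and yield
$$\bE\left(\sup_{0\le r\le Q}|L^\thresholda_y(r)-L^\thresholda_x(r)|^p\right)\le M_{p,\thresholda}\,|y-x|^{p\stablebetaminusonewith/2},$$
which combined with the first step is the desired bound. The main obstacle is to verify carefully that $X^\thresholda$ satisfies the hypotheses of the Blumenthal--Getoor oscillation estimate, in particular at the boundary points $0$ and $\thresholda$, where its behaviour differs from that of $X$; if needed, one can first establish the bound for $x,y\in(0,\thresholda)$ and extend to the closed interval by continuity of $L^\thresholda_y$ in $y$.
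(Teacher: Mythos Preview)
Your proposal is correct and follows essentially the same route as the paper: apply the Blumenthal--Getoor oscillation estimate to the restricted process $X^\thresholda$, compare $\varphi^\thresholda(x,y)$ with $\varphi(|y-x|)$ via $T_y(X^\thresholda)\le T_y(X)$, invoke Lemma~\ref{lm15}, and then rerun the arguments of Corollaries~\ref{cor16}--\ref{cor17} with $Q$ in place of $V$. The paper does not address your caveat about boundary behaviour explicitly either, so your treatment is at least as careful.
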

\begin{proof} We repeat some of the previous arguments. First, \cite[Proposition V.(3.28)]{BlG68} applies to   
  $X^\thresholda$ with local times $\ell_\thresholda^x(r)=\ell^x(\tau^{[0,\thresholda]}(r))$, $0\le x\le \thresholda$, $r\ge 0$: for all $N>0$, $\delta>0$, $x,y,z\in[0,\thresholda]$,
   $$\bP_z\left(\sup_{0\le t\le N}\left|\ell^y_\thresholda(r)-\ell_\thresholda^x(r)\right|\ge 2\delta\right)\le 2e^N\exp(-\delta/2\varphi_\thresholda(x,y)),$$
   where 
   \begin{eqnarray*}(\varphi_\thresholda(x,y))^2&=&1-\bE_x(\exp(-T_y(X^\thresholda)))\bE_y(\exp(-T_x(X^\thresholda)))\\
                    &\le& 1-\bE_x(\exp(-T_y(X)))\bE_y(\exp(-T_x(X)))\ \ =\ \ (\varphi(|y-x|))^2.
   \end{eqnarray*}
   By Lemma \ref{lm15} and integration, we find $K>0$ such that for all $p>0$
   $$\bE_z\left(\sup_{0\le r\le N}\left|\ell^y_\thresholda(r)-\ell^x_\thresholda(r)\right|^p\right)\le 2\Gamma(p+1)K^{-p}e^N|y-x|^{p\stablebetaminusonewith /2}.$$
   Now the arguments of Corollaries \ref{cor16} and \ref{cor17} apply to give $M_{p,\thresholda}$ so that
   $$\bE\left(\sup_{0\le t\le \tau^{[0,\thresholda]}(Q)}\left|\ell^y(t)-\ell^x(t)\right|^p\right)
     =\bE\left(\sup_{0\le r\le Q}\left|\ell^y_\thresholda(t)-\ell^x_\thresholda(t)\right|^p\right)\le M_{p,\thresholda}|y-x|^{p\stablebetaminusonewith /2}.\vspace{-0.5cm}$$
\end{proof}

\begin{theorem}\label{thmuniloc} For any $\thresholda>0$, any random time $Q$ with moments of all orders and any $\localtimeholder\in(0,\stablebetaminusonewith /2)$, the random variable 
 $$ D^\thresholda_\localtimeholder(\tau^{[0,\thresholda]}(Q))=\sup_{0\le t\le \tau^{[0,\thresholda]}(Q),0\le x<y\le \thresholda}\frac{|\ell^y(t)-\ell^x(t)|}{|y-x|^\localtimeholder}\ <\ \infty\qquad\mbox{a.s.}
  $$
has moments of all orders.
\end{theorem}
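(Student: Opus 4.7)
\begin{pfofprop14}
The plan is to apply a Kolmogorov--Chentsov chaining argument to the $C$-valued random field $x\mapsto(\ell^x(t),\,0\le t\le\tau^{[0,\thresholda]}(Q))$ on the spatial interval $[0,\thresholda]$, using Proposition \ref{prop18} as the input moment estimate. Set
$$M(x,y):=\sup_{0\le t\le\tau^{[0,\thresholda]}(Q)}|\ell^y(t)-\ell^x(t)|,\qquad x,y\in[0,\thresholda],$$
so that $D^\thresholda_\localtimeholder(\tau^{[0,\thresholda]}(Q))=\sup_{0\le x<y\le\thresholda}M(x,y)/|y-x|^\localtimeholder$. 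By Proposition~\ref{prop18}, for every $p>0$ there is a constant $M_{p,\thresholda}$ with $\bE(M(x,y)^p)\le M_{p,\thresholda}|y-x|^{p\alpha/2}$ for all $x,y\in[0,\thresholda]$.

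First, fix the target exponent $\localtimeholder\in(0,\alpha/2)$ and any desired moment order $q\ge 1$. Choose $p>\max\{q,2/(\alpha-2\localtimeholder)\}$, so that $p\alpha/2=1+\gamma_p$ with $\gamma_p>0$ and $\localtimeholder<\gamma_p/p=\alpha/2-1/p$. Now run the standard dyadic chaining argument: for each $n\ge 1$ let $x^n_k=k\thresholda 2^{-n}$ and
$$\Delta_n:=\max_{1\le k\le 2^n}M(x^n_{k-1},x^n_k),\qquad
\bE(\Delta_n^p)\le 2^n M_{p,\thresholda}(\thresholda 2^{-n})^{p\alpha/2}=M_{p,\thresholda}\thresholda^{p\alpha/2}\,2^{-n\gamma_p}.$$
For any dyadic $x<y$ in $[0,\thresholda]$ with $|y-x|\in[\thresholda 2^{-n-1},\thresholda 2^{-n}]$, telescoping along adjacent dyadic rationals yields $M(x,y)\le 2\sum_{m\ge n}\Delta_m$; hence for any $\theta_p\in(\localtimeholder,\alpha/2-1/p)$,
$$H_p:=\sup_{\substack{x,y\in[0,\thresholda]\text{ dyadic}\\x\ne y}}\frac{M(x,y)}{|y-x|^{\theta_p}}\le C_{\thresholda,\theta_p}\sum_{m\ge 1}2^{m\theta_p}\Delta_m.$$
By Minkowski and $\bE(\Delta_m^p)^{1/p}\lesssim 2^{-m\gamma_p/p}=2^{-m(\alpha/2-1/p)}$, the series of $L^p$-norms converges (because $\theta_p<\alpha/2-1/p$), so $\bE(H_p^p)<\infty$.

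Finally I would extend the supremum from dyadic rationals to all of $[0,\thresholda]$ using the joint continuity of $(y,t)\mapsto\ell^y(t)$ (Boylan \cite{Boy1964}), which allows one to replace $x,y\in[0,\thresholda]$ with dyadic approximants without changing $M(x,y)$. Since $\localtimeholder<\theta_p$, for $x\ne y$ in $[0,\thresholda]$ we have $M(x,y)/|y-x|^\localtimeholder\le H_p\,|y-x|^{\theta_p-\localtimeholder}\le H_p\,\thresholda^{\theta_p-\localtimeholder}$, so $D^\thresholda_\localtimeholder(\tau^{[0,\thresholda]}(Q))\le\thresholda^{\theta_p-\localtimeholder}H_p$, which has finite $p$-th moment. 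Since $p>q$, Jensen's inequality (or the monotonicity of $L^r$-norms on a probability space) gives a finite $q$-th moment; $q$ being arbitrary, this completes the proof.

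The only real obstacle is the choice of $p$ large enough to simultaneously (a)~obtain a strictly positive chaining exponent $\gamma_p/p=\alpha/2-1/p>\localtimeholder$ and (b)~exceed the target moment $q$. Both are automatic because Proposition~\ref{prop18} supplies $L^p$-bounds for \emph{every} $p$, and because the restriction of $X$ to $[0,\thresholda]$ via $\tau^{[0,\thresholda]}(Q)$ is precisely what allows us to use Proposition~\ref{prop18} in place of the unrestricted Corollary~\ref{cor17}, which would otherwise fail because $\tau^0(S)$ has infinite mean.
\end{pfofprop14}
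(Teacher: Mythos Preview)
Your proof is correct and follows essentially the same route as the paper: both feed the increment bound of Proposition~\ref{prop18} into a Kolmogorov--Chentsov argument, then let $p\to\infty$ to cover all $\localtimeholder<\alpha/2$ and all moment orders. The only difference is packaging: the paper invokes the Banach-space-valued version of Kolmogorov--Chentsov from \cite[Theorem I.(2.1)]{RY} applied to $v\mapsto(\ell^{\thresholda v}(t\wedge\tau^{[0,\thresholda]}(Q)),t\ge 0)$, while you write out the dyadic chaining explicitly and then pass to the limit via Boylan's continuity.
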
 
\begin{proof} Now consider the Banach space of bounded continuous functions from $[0,\infty)$ to $\bR$ equipped with the supremum norm $||\cdot||_\infty$ (rather than any localised version). Then the stopped local time processes $L_v=(\ell^{\thresholda v}(t\wedge \tau^{[0,\thresholda]}(Q)),t\ge 0)$, $v\in[0,1]$, are members of this Banach space and satisfy
$$\bE(||L_v-L_u||_\infty^p)\le M_{p,\thresholda} |v-u|^{p\stablebetaminusonewith /2}\qquad\mbox{for all $u,v\in[0,1]$,}
$$
by Proposition \ref{prop18}. We can apply the Revuz-Yor version of the Kolmogorov-Chentsov theorem \cite[Theorem I.(2.1)]{RY} with $\varepsilon=p\stablebetaminusonewith /2-1$, to find that
$$\bE((D_\localtimeholder^a(\tau^{[0,\thresholda]}(Q)))^p)=\frac{1}{M_{p,\thresholda}}\bE\left(\left(\sup_{0\le u<v\le 1}\frac{||L_v-L_u||_\infty}{|v-u|^\localtimeholder}\right)^p\right)<\infty,$$
as long as $\localtimeholder\in(0,\varepsilon/p)$, i.e. if $\localtimeholder<(\stablebetaminusonewith /2)-(1/p)$, which gives the result for $\localtimeholder\in (0,\stablebetaminusonewith /2)$ by letting $p\to\infty$. 
\end{proof}

Now consider the Poisson point process of excursions of $X$ away from 0, enriched by independent 
${\rm BESQ}(-2\stablebetaminusonewith )$ excursions in each jump as in Section \ref{secmarkbes}. Mark each excursion with probability $1-e^{-m}$, where $m$ is the value of the ${\rm BESQ}(-2\stablebetaminusonewith )$ excursion in the jump of $X$ across 0 when crossing 0. Denote by $S$ the point process local time of the first mark and by
$T$ the left endpoint of the corresponding excursion of $X$. 

\begin{corollary} For any $\thresholda>0$ and $\localtimeholder\in(0,\stablebetaminusonewith /2)$, the random variable
  $$D^{[0,\thresholda]}_\localtimeholder:=D^{[0,\thresholda]}_\localtimeholder(T)=\sup_{0\le t\le T,0\le x<y\le \thresholda}\frac{|\ell^y(t)-\ell^x(t)|}{|y-x|^\localtimeholder}\ <\ \infty\qquad\mbox{a.s.}
  $$
and has moments of all orders.
\end{corollary}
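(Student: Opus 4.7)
The plan is to apply Theorem \ref{thmuniloc} with the random time $Q := \sigma^b_S$, where $\sigma^b$ is the inverse local time at 0 of the restricted process $X^b$ introduced in Proposition \ref{prop12}. Provided $Q$ has moments of all orders and $T \le \tau^{[0,b]}(Q)$ almost surely, I obtain
$$D^{[0,b]}_\nu(T) \le D^b_\nu(\tau^{[0,b]}(Q)),$$
and the right-hand side has moments of all orders by Theorem \ref{thmuniloc}.

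The inequality $T \le \tau^{[0,b]}(Q)$ is a routine consequence of the time-change formula for local times, which yields $\sigma^b_s = R^{[0,b]}(\tau^0(s))$ for all $s\ge 0$, where $R^{[0,b]}(t) = \int_0^t 1_{\{X_u \in [0,b]\}}\,du$ and $\tau^0$ is the inverse local time of $X$ at 0. The left endpoint $T = \tau^0(S-)$ of the marked excursion is at most $\tau^0(S)$, and $R^{[0,b]}$ is non-decreasing, so
$$R^{[0,b]}(T) \le R^{[0,b]}(\tau^0(S)) = \sigma^b_S = Q,$$
which gives $T \le \tau^{[0,b]}(Q)$ by definition of the right-inverse.

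For the moments of $Q$, the marked excursions form an independently thinned Poisson point process in the local-time scale of $X$ at 0, with rate
$$\lambda^* = \int (1-e^{-m(\omega)})\,n_+(d\omega).$$
Writing $m = HZ(U)$ for a unit $\mathrm{BESQ}(-2\alpha)$ excursion $Z$ and independent uniform $U$, and using Proposition \ref{propunif} for the density of $H$ under $n$ together with the bound $1-e^{-m} \le m \wedge 1$, the integrand is $O(H^{-\alpha})$ as $H\to 0$ and $O(H^{-\alpha-1})$ as $H\to\infty$, so $\lambda^* < \infty$ and $S \sim \mathrm{Exp}(\lambda^*)$ has moments of all orders. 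Proposition \ref{prop12} supplies moments of all orders for $\sigma^b_1$; decomposing $\sigma^b_{\lceil S\rceil}$ as a sum of $\lceil S\rceil$ i.i.d.\ copies of $\sigma^b_1$ and applying Minkowski's inequality together with Cauchy-Schwarz against the exponential tail of $S$ yields $\bE[Q^p] \le \bE[(\sigma^b_{\lceil S\rceil})^p] < \infty$ for every $p\ge 1$.

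The substantive step is the identification of $Q$ as $\sigma^b_S$: the restriction of $X$ to $[0,b]$ tames the heavy tails that make $\tau^0$ have infinite mean, so $\sigma^b_S$ has moments of all orders even though $T$ itself does not. Once this is recognized, the remaining verifications are direct.
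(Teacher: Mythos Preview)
Your proof is correct and follows essentially the same route as the paper: identify $T\le\tau^{[0,b]}(\sigma^b_S)$ via the time-change relation $\sigma^b_s=R^{[0,b]}(\tau^0(s))$, then show $\sigma^b_S$ has all moments by combining the exponential tail of $S$ with the moments of $\sigma^b_1$ from Proposition~\ref{prop12}, and conclude via Theorem~\ref{thmuniloc}. The only cosmetic differences are that you supply an explicit check that the marking rate $\lambda^*$ is finite (the paper takes this for granted), and you bound $\bE[(\sigma^b_{\lceil S\rceil})^p]$ via Minkowski plus Cauchy--Schwarz on each term rather than the paper's cruder inequality $(\sum_{k\le N}x_k)^p\le N^p\sum x_k^p$; both arguments handle the dependence between $S$ and $\sigma^b$ in the same way.
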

\begin{proof} First note that $T=\tau^{[0,\thresholda]}(\sigma^\thresholda_{S-})\le \tau^{[0,\thresholda]}(\sigma^\thresholda_S)$. By elementary properties of Poisson point processes, $S$ is exponentially distributed, with parameter $\mu$, say. Note, however, that $S$ is not independent of $\sigma^\thresholda$. We use the same argument as in Corollary \ref{cor17} and (\ref{elemineq}) to find
  $$\bE((\sigma_S^\thresholda)^p)\le\sum_{k\ge 1}\left(\bP(S\!\ge\! k\!-\!1)\right)^{1/2}\left(\bE\left(\!\left(\sigma_k^\thresholda\right)^{2p}\right)\!\right)^{1/2}\!\!\le\!\left(\sum_{k\ge 1}e^{-\mu k/2}k^{p+1/2}\!\right)e^{\mu/2}\left(\bE\!\left(\!\left(\sigma_1^\thresholda\right)^{2p}\right)\!\right)^{1/2}\!\!.$$
  By Proposition \ref{prop12}, the inverse local time $\sigma^\thresholda_1$ of $X^\thresholda$ has moments of all
  orders. Also the series clearly converges. Hence $\sigma_S^\thresholda$ has moments of all orders, as does $\sigma_{S-}^\thresholda<\sigma_S^a$. Hence, Theorem \ref{thmuniloc} applies to $Q=\sigma_{S-}^\thresholda$, and this completes the proof.
\end{proof}

\subsection{Proof of Theorem \ref{thmltholdermoment}}

\begin{proof}[Proof of Theorem \ref{thmltholdermoment}] We repeat the argument of the previous proof. Specifically, we apply Theorem \ref{thmuniloc} to 
  $Q=R^{[0,\thresholda]}(\tau^0(S))=\sigma^b_S$.
\end{proof}

\bibliographystyle{plain}
\bibliography{adabbrev2}

\noindent
\end{document}